\documentclass[11pt]{amsart}

\usepackage[T1]{fontenc}
\usepackage{lmodern}
\usepackage{amsmath,amssymb,amsthm,mathtools}
\usepackage{microtype}
\usepackage[numbers,sort&compress]{natbib}
\usepackage{enumitem}
\usepackage[colorlinks=true,citecolor=blue,linkcolor=blue,urlcolor=blue]{hyperref}

\numberwithin{equation}{section}
\newtheorem{theorem}{Theorem}[section]
\newtheorem{proposition}[theorem]{Proposition}
\newtheorem{lemma}[theorem]{Lemma}
\newtheorem{corollary}[theorem]{Corollary}
\theoremstyle{definition}
\newtheorem{definition}[theorem]{Definition}
\newtheorem{assumption}[theorem]{Assumption}
\newtheorem{remark}[theorem]{Remark}
\newtheorem{example}[theorem]{Example}

\newcommand{\R}{\mathbb R}

\newcommand{\M}{\mathcal M}
\newcommand{\Kclass}{\mathfrak K}
\newcommand{\calA}{\mathcal A}
\newcommand{\calD}{\mathcal D}
\newcommand{\calK}{\mathcal K}
\newcommand{\norm}[1]{\left\lVert #1\right\rVert}

\newcommand{\dual}[2]{\left\langle #1,#2\right\rangle}
\newcommand{\weakstar}{\stackrel{*}{\rightharpoonup}}
\DeclareMathOperator{\dist}{dist}
\DeclareMathOperator{\Lip}{Lip}
\newcommand{\dBL}{d_{\mathrm{BL}}}
\providecommand{\doi}[1]{\href{https://doi.org/#1}{doi:\nolinkurl{#1}}}
\title[Kernel-robust dynamics with measure-valued delay]{Kernel-Robust Dynamics for Reaction--Diffusion Equations with Measure-Valued Delay}
\author{Lennon J. Shikhman}
\address{Department of Mathematics and Systems Engineering, Florida Institute of Technology, Melbourne, FL, USA}
\email{lshikhman2022@fit.edu}
\date{}
\subjclass[2020]{35K57, 35B40, 35B41, 34K30, 37L30, 45K05}
\keywords{reaction--diffusion equations; measure-valued delay; bounded-Lipschitz distance; kernel approximation; global attractors; upper semicontinuity}

\begin{document}
\begin{abstract}
We study reaction--diffusion equations with finite signed measure delays and nonlinear, possibly nonlocal feedback. We treat abstract reactions that are Lipschitz on bounded $L^2$-balls with quadratic coercivity, and pointwise cubic polynomials with positive leading coefficient in spatial dimensions at most three. For affine-growth feedback and time-independent forcing in $H^{-1}(\Omega)$, both classes generate global weak solution semiflows on $X=C([-r,0];L^2(\Omega))$. Besides total-variation Lipschitz stability, we obtain quantitative weak-star stability with modulus $\varpi(d)=d\log(e+d^{-1})$ for the abstract class and $\varpi(d)^{1/2}$ for cubic reactions, where $d$ is the bounded-Lipschitz distance between delay measures. The estimates are uniform on bounded sets of continuous histories and yield concentration and atomic-quadrature rates. A damping condition gives common compact absorption for the abstract class. Cubic coercivity removes that smallness condition on every fixed total-variation-bounded kernel class. In both cases the global attractors are upper semicontinuous in $X$ and in $C([-r,0];H_0^1(\Omega))$.
\end{abstract}
\maketitle

\section{Introduction}

Reaction--diffusion equations with delay describe systems in which present evolution depends on a finite history of past states. A memory law may be estimated from data, regularized, replaced by a discrete delay, or approximated by quadrature. These operations raise two questions: how accurately do the resulting solutions approximate the original trajectories, and how do the long-time dynamics change? Partial functional differential equations provide a natural framework for these questions; see Travis and Webb \cite{traviswebb1974}, Martin and Smith \cite{martinsmith1991}, and the monographs \cite{haleverduyn1993,wu1996}.

We consider
\begin{equation}\label{eq:intro}
 \partial_tu-\Delta u+f(u)
 =\int_{[-r,0]}B(u(t+\theta))\,d\mu(\theta)+h,
\end{equation}
with homogeneous Dirichlet boundary conditions on a bounded Lipschitz domain $\Omega\subset\R^d$ and a prescribed history in $C([-r,0];L^2(\Omega))$. Here $\mu$ is a finite signed Borel measure. Absolutely continuous measures give distributed delays, point masses give discrete delays, and an atom at zero gives instantaneous feedback. Signed measures allow combinations of positive and negative feedback without imposing order-preserving structure.

Measure-valued delay is an established formulation. It appears in infinite-dimensional control theory \cite{bensoussandapratodelfourmitter2007}. Casas, Mateos and Tr\"oltzsch \cite{casasmateostroltzsch2018} prove well-posedness, differentiability of the measure-to-state map, and strong state convergence under weak-star kernel convergence for a semilinear parabolic control problem with linear delayed feedback. Their setting includes polynomial scalar reactions and spatially continuous histories. Our results permit nonlinear, possibly nonlocal feedback on $L^2(\Omega)$ and histories that are only continuous with values in that space. Section~\ref{sec:cubic} treats pointwise cubic reactions in this setting. The quantitative estimates are uniform over bounded sets of such histories.

Ishizaka \cite{ishizaka2026wellposedness} treats coercive evolution equations with form-valued delayed feedback that can contain principal spatial derivatives. The revised framework gives signed-kernel total-variation stability and narrow stability for nonnegative retarded kernels at natural energy regularity. That setting and the present one are complementary: here the feedback is $L^2$-valued, and weak-star stability allows signed kernels and atoms at zero. Continuous dependence under weak coefficient topologies has also been developed for linear nonautonomous parabolic delay equations by Kryspin and Mierczy\'nski \cite{kryspinmierczynski2023,kryspinmierczynski2024}.

The long-time theory of delayed equations is extensive; examples include \cite{caraballoreal2004,caraballomarinvalero2005,marinrubioreal2010}. Of particular relevance, Pra\v z\'ak \cite{prazak2007} constructs exponential attractors for abstract parabolic systems whose history dependence is controlled by a possibly singular measure, assuming a bounded invariant region. Thus, neither singular measure delays nor attractor existence alone distinguish the present work. Our focus is quantitative perturbation of the delay law, explicit uniform absorption, and convergence of the associated attractors under that perturbation. The abstract attractor machinery is classical \cite{hale1988,temam1988,robinson2001,babinvishik1992,chepyzhovvishik2002,carvalholangarobinson2013}, as is the upper-semicontinuity mechanism \cite{halelinraugel1988}.

Finite delay should also be distinguished from heat-conduction and viscoelastic memory models, in which derivatives of the state often enter a convolution on an infinite interval. Those models naturally lead to history or minimal-state spaces; see \cite{dafermos1970,giorgipatamarzocchi1998,grassellipata2006,contimarchinipata2014,pruss1993}. We retain the continuous finite-history space throughout.

The main results are the following.
\begin{enumerate}[label=(\roman*)]
 \item For the abstract reaction class, global weak well-posedness holds under bounded-ball Lipschitz assumptions, affine growth only for the feedback, and a quadratic reaction coercivity bound. No global one-sided Lipschitz condition or upper growth bound on the reaction is needed.
 \item For that class, the solution semiflow is Lipschitz in total variation and has modulus $d\log(e+d^{-1})$ in the bounded-Lipschitz distance. The latter estimate is uniform on bounded sets of histories and gives concentration and atomic-quadrature rates.
 \item A damping condition involving the feedback growth slope yields common compact absorption and compact global attractors with forcing $h\in H^{-1}(\Omega)$. The threshold is sharp as a guarantee based only on the abstract class's structural constants.
 \item The attractors are upper semicontinuous in both $C([-r,0];L^2(\Omega))$ and $C([-r,0];H_0^1(\Omega))$. A structured negative-feedback family admits a stronger criterion that allows every fixed feedback gain.
 \item A separate variational argument covers every scalar cubic polynomial with positive leading coefficient when $d\leq3$, including the Allen--Cahn reaction. It gives global well-posedness, total-variation Lipschitz stability, a quantitative weak-star modulus $\sqrt{d\log(e+d^{-1})}$, and attractor upper semicontinuity in both history spaces. Quartic coercivity permits arbitrary fixed bounds on kernel total variation.
\end{enumerate}
The quantitative kernel estimate relies on a regularizing effect in the delay variable: after integration against the heat semigroup, even a merely continuous bounded trajectory produces a logarithmic Lipschitz modulus. This avoids imposing a uniform temporal modulus on the initial histories.

\section{Functional setting and assumptions}\label{sec:setting}

Set
\[
 H=L^2(\Omega),\qquad V=H_0^1(\Omega),\qquad V'=H^{-1}(\Omega),
 \qquad \norm{v}_V=\norm{\nabla v}_{L^2}.
\]
We use the Gelfand triple $V\hookrightarrow H\hookrightarrow V'$. Let $A$ be the positive self-adjoint Dirichlet Laplacian on $H$, with
\[
 D(A)=\{v\in H_0^1(\Omega):\Delta v\in H\}.
\]
Its inverse is compact, $D(A^{1/2})=V$, and
\begin{equation}\label{eq:poincare}
 \norm{v}_V^2\geq\lambda_1\norm{v}_H^2,\qquad
 \norm{q}_{V'}\leq\lambda_1^{-1/2}\norm{q}_H,
\end{equation}
where $\lambda_1>0$ is the first eigenvalue. The same symbol $A:V\to V'$ denotes the variational operator
$\dual{Av}{w}=(\nabla v,\nabla w)_{L^2}$.

Fix $r>0$, write $I=[-r,0]$, and define
\[
 X=C(I;H),\qquad
 \norm{\varphi}_X=\sup_{\theta\in I}\norm{\varphi(\theta)}_H.
\]
For $u:[-r,T]\to H$, its history segment is $u_t(\theta)=u(t+\theta)$; time derivatives are denoted by $\partial_tu$.
Let $\M(I)=C(I)'$ be the space of finite signed Borel measures, with
$\norm{\mu}_{\mathrm{TV}}=|\mu|(I)$.

\begin{assumption}\label{ass:main}
The forcing $h\in V'$ is independent of time. The maps $f,B:H\to H$ are Lipschitz on bounded balls: for every $R>0$ there are finite constants $L_f(R),L_B(R)$ such that
\[
 \begin{aligned}
 \norm{f(u)-f(v)}_H&\leq L_f(R)\norm{u-v}_H,\\
 \norm{B(u)-B(v)}_H&\leq L_B(R)\norm{u-v}_H
 \end{aligned}
\]
when $\norm{u}_H,\norm{v}_H\leq R$.
There are constants $b_0,b_1,c_0\geq0$ and $a_f\in\R$ such that
\begin{equation}\label{eq:structural}
 \norm{B(u)}_H\leq b_0+b_1\norm{u}_H,\qquad
 (f(u),u)_H\geq a_f\norm{u}_H^2-c_0,\qquad u\in H.
\end{equation}
\end{assumption}

For $\mu\in\M(I)$ and $u\in C([-r,T];H)$, define
\begin{equation}\label{eq:delayop}
 \calD_\mu u(t)=\int_I B(u(t+\theta))\,d\mu(\theta).
\end{equation}
The integrand is continuous and bounded in $H$. The Bochner integral is taken against $|\mu|$, with the sign specified by the Jordan decomposition. The equation is
\begin{equation}\label{eq:main}
 \partial_tu+Au+f(u)=\calD_\mu u+h,\qquad
 u|_{[-r,0]}=\varphi\in X.
\end{equation}

\begin{remark}[Scope of the reaction hypothesis]\label{rem:scope}
No upper growth bound on $f$ is imposed. Once an $H$-bound $R$ is known,
\begin{equation}\label{eq:fball}
 \norm{f(u)}_H\leq \norm{f(0)}_H+L_f(R)R
 \quad\text{for }\norm{u}_H\leq R.
\end{equation}
Similarly, bounded-ball Lipschitz continuity gives
$(f(u)-f(v),u-v)_H\geq-L_f(R)\norm{u-v}_H^2$
on that ball, which suffices for uniqueness.

These are operator hypotheses on $H$, not polynomial Nemytskii hypotheses. On a nonatomic domain, bounded-ball Lipschitz continuity of a scalar Nemytskii map $u\mapsto g(u)$ on $L^2$ forces $g$ to be globally Lipschitz: apply the bound on one fixed ball to $u=a\mathbf1_E$ and $v=b\mathbf1_E$, choosing $E$ small enough, and cancel $|E|^{1/2}$. Pointwise cubic reactions are treated separately in Section~\ref{sec:cubic}, with a different weak-solution regularity class.
\end{remark}

\begin{example}[Bounded feedback and superlinear abstract reactions]\label{ex:admissible}
A bounded feedback, such as $B(u)(x)=\tanh(u(x))$, admits $b_1=0$ and $b_0=|\Omega|^{1/2}$. More generally, globally Lipschitz scalar feedbacks define admissible Nemytskii maps. The abstract class also contains feedbacks
$B(u)=\rho(\norm{u}_H)u$ with bounded locally Lipschitz $\rho$.

The reaction $f(u)=\norm{u}_H^2u-au$, $a\in\R$, is Lipschitz on bounded $H$-balls and has superlinear $H$-growth. For every $k\geq0$,
\[
 (f(u),u)_H\geq k\norm{u}_H^2-\frac{(k+a)_+^2}{4},
 \qquad x_+=\max\{x,0\}.
\]
Consequently its coercivity coefficient can be chosen as large as required, with a corresponding change in the additive constant.
\end{example}

\begin{definition}[Weak solution]
A weak solution on $[0,T]$ is a function
\[
 u\in C([-r,T];H)\cap L^2(0,T;V),\qquad
 \partial_tu\in L^2(0,T;V'),
\]
equal to $\varphi$ on $[-r,0]$, such that for every $v\in V$ and almost every $t\in(0,T)$,
\begin{equation}\label{eq:weakform}
 \dual{\partial_tu}{v}+(\nabla u,\nabla v)_{L^2}
 +(f(u),v)_H=(\calD_\mu u,v)_H+\dual{h}{v}.
\end{equation}
\end{definition}

\section{Delay estimates and measure convergence}\label{sec:delay}

\begin{lemma}[Delay bounds]\label{lem:delaybounds}
Under Assumption~\ref{ass:main},
\begin{equation}\label{eq:delaygrowth}
 \norm{\calD_\mu u(t)}_H
 \leq\norm{\mu}_{\mathrm{TV}}
 \left(b_0+b_1\sup_{s\in[t-r,t]}\norm{u(s)}_H\right).
\end{equation}
If $u,v$ are bounded by $R$ in $C([-r,T];H)$, then
\begin{equation}\label{eq:delaylip}
 \norm{\calD_\mu u(t)-\calD_\mu v(t)}_H
 \leq\norm{\mu}_{\mathrm{TV}}L_B(R)
 \sup_{s\in[t-r,t]}\norm{u(s)-v(s)}_H,
\end{equation}
and
\begin{equation}\label{eq:delayLtwo}
 \norm{\calD_\mu u-\calD_\mu v}_{L^2(0,T;H)}
 \leq\norm{\mu}_{\mathrm{TV}}L_B(R)
 \norm{u-v}_{L^2(-r,T;H)}.
\end{equation}
\end{lemma}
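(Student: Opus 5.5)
The plan is to use the basic Bochner inequality $\norm{\int_I g\,d\mu}_H\leq\int_I\norm{g(\theta)}_H\,d|\mu|(\theta)$. It holds because the integral is defined against $|\mu|$ with a sign density $\sigma=d\mu/d|\mu|\in\{\pm1\}$ from the Jordan decomposition. Everything then reduces to pointwise bounds on the integrand, plus Minkowski's integral inequality for the $L^2$ estimate.

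For \eqref{eq:delaygrowth}, fix $t$. For each $\theta\in I$ the point $t+\theta$ lies in $[t-r,t]$, so the growth bound in \eqref{eq:structural} gives $\norm{B(u(t+\theta))}_H\leq b_0+b_1\sup_{s\in[t-r,t]}\norm{u(s)}_H$. This bound is independent of $\theta$. Integrating it against $|\mu|$ yields the factor $|\mu|(I)=\norm{\mu}_{\mathrm{TV}}$. For \eqref{eq:delaylip}, linearity gives $\calD_\mu u(t)-\calD_\mu v(t)=\int_I\bigl(B(u(t+\theta))-B(v(t+\theta))\bigr)\,d\mu(\theta)$. Both arguments lie in the ball of radius $R$, so the ball-Lipschitz bound controls the integrand by $L_B(R)\norm{u(t+\theta)-v(t+\theta)}_H$, which is at most $L_B(R)\sup_{[t-r,t]}\norm{u-v}_H$. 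The same integration then finishes the estimate.

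For \eqref{eq:delayLtwo}, I avoid the supremum and keep the pointwise form
\[
\norm{\calD_\mu u(t)-\calD_\mu v(t)}_H\leq L_B(R)\int_I w(t+\theta)\,d|\mu|(\theta),\qquad w=\norm{u-v}_H .
\]
Here $w$ is continuous and defined on $[-r,T]$. Taking the $L^2(0,T)$ norm in $t$ and applying Minkowski's integral inequality, which is legitimate since $(t,\theta)\mapsto w(t+\theta)$ is jointly continuous, bounds the result by $L_B(R)\int_I\norm{w(\cdot+\theta)}_{L^2(0,T)}\,d|\mu|(\theta)$. Each inner norm equals $\norm{w}_{L^2(\theta,T+\theta)}$, and since $[\theta,T+\theta]\subset[-r,T]$ this is at most $\norm{u-v}_{L^2(-r,T;H)}$. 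Integrating over $\theta$ gives the factor $\norm{\mu}_{\mathrm{TV}}$.

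The only step needing care is the Minkowski step. It requires measurability in $t$ of $\calD_\mu u$ and joint measurability of the integrand. These follow because $\theta\mapsto B(u(t+\theta))$ is continuous into $H$: $B$ is continuous and $u$ is continuous. Hence $\calD_\mu u$ is continuous in $t$ by dominated convergence. All other steps are routine.
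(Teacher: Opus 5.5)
Your proof is correct and takes essentially the paper's route. As in the paper, the first two bounds come from estimating the Bochner integral by the integral of the norm against $|\mu|$, and for \eqref{eq:delayLtwo} your Minkowski integral inequality does the job of the paper's Cauchy--Schwarz with respect to $|\mu|$ plus Fubini, giving the same constant $\norm{\mu}_{\mathrm{TV}}L_B(R)$ (your measurability remarks are sound and just make explicit what the paper leaves implicit).
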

\begin{proof}
The first two bounds follow from total variation. For the last one, the case $\mu=0$ is immediate. Otherwise, Cauchy--Schwarz with respect to $|\mu|$ and Fubini give
\begin{align*}
 &\int_0^T\norm{\calD_\mu u(t)-\calD_\mu v(t)}_H^2\,dt\\
 &\quad\leq\norm{\mu}_{\mathrm{TV}}L_B(R)^2
 \int_I\int_0^T\norm{u(t+\theta)-v(t+\theta)}_H^2\,dt\,d|\mu|(\theta)\\
 &\quad\leq\norm{\mu}_{\mathrm{TV}}^2L_B(R)^2
 \norm{u-v}_{L^2(-r,T;H)}^2.
\end{align*}
The same argument applies to bounded measurable trajectories, with the delay integral interpreted for almost every time.
\end{proof}

\begin{lemma}[Delay Gronwall inequality]\label{lem:delaygronwall}
Let $y\in C([-r,T];[0,\infty))$ satisfy
\[
 y(t)\leq a+b\int_0^t\sup_{\sigma\in[s-r,s]}y(\sigma)\,ds,
 \qquad a,b\geq0.
\]
Then
\[
 \sup_{s\in[-r,t]}y(s)
 \leq\max\{\sup_{s\in[-r,0]}y(s),a\}e^{bt},
 \qquad 0\leq t\leq T.
\]
\end{lemma}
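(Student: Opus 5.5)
Set $M=\max\{\sup_{s\in[-r,0]}y(s),a\}$ and $Y(t)=\sup_{s\in[-r,t]}y(s)$ for $t\in[0,T]$. Then $Y$ is nondecreasing and continuous, since $y$ is continuous on the compact interval $[-r,T]$. The plan is to compare $Y$ with the function $Me^{bt}$, which satisfies $Me^{bt}=M+b\int_0^tMe^{bs}\,ds$.

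First I derive an integral inequality for $Y$. For $s\in[0,t]$ we have $[s-r,s]\subset[-r,s]$, so $\sup_{\sigma\in[s-r,s]}y(\sigma)\le Y(s)$. The hypothesis then gives
\[
 y(\tau)\le a+b\int_0^\tau Y(s)\,ds\le M+b\int_0^tY(s)\,ds
 \qquad\text{for }0\le\tau\le t,
\]
using $a\le M$, $b\ge0$, $Y\ge0$ and $\tau\le t$. For $\tau\in[-r,0]$ we have $y(\tau)\le M$ directly, and the right side above is at least $M$. Taking the supremum over $\tau\in[-r,t]$ yields
\[
 Y(t)\le M+b\int_0^tY(s)\,ds,\qquad 0\le t\le T.
\]

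Next I apply the classical Gronwall lemma in integral form to the continuous function $Y$. Set $Z(t)=M+b\int_0^tY$. Then $Z'=bY\le bZ$, so $(e^{-bt}Z)'\le0$. This gives $Z(t)\le Me^{bt}$, and hence $Y(t)\le Me^{bt}$, which is the claim. The case $b=0$ is included, since it gives $Y\le M$ directly.

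There is no serious obstacle. The only point needing care is the first step: the delayed supremum must be dominated by the running supremum $Y$, and the pre-history segment $[-r,0]$ must be absorbed into the constant $M$ rather than into $a$. That is why the maximum appears in the statement. Continuity of $Y$, which is needed for the integral and the differentiation of $Z$, follows from uniform continuity of $y$. Measurability alone would in fact suffice, since $Y$ is monotone.
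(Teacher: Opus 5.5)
Your proposal is correct and follows the same route as the paper: you pass to the running supremum $Y(t)=\sup_{s\in[-r,t]}y(s)$, absorb the pre-history into the constant $\max\{Y(0),a\}$ to get $Y(t)\le \max\{Y(0),a\}+b\int_0^tY(s)\,ds$, and then apply the classical Gronwall lemma. You also supply the details the paper leaves implicit, namely the domination of the delayed supremum by $Y$, the treatment of $\tau\in[-r,0]$, and the continuity of $Y$.
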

\begin{proof}
Set $Y(t)=\sup_{s\in[-r,t]}y(s)$. Taking the supremum in the assumed inequality gives
$Y(t)\leq\max\{Y(0),a\}+b\int_0^tY(s)\,ds$.
Apply the ordinary Gronwall inequality.
\end{proof}

We use the bounded-Lipschitz distance
\begin{equation}\label{eq:BL}
 \dBL(\mu,\nu)=
 \sup_{\norm{q}_\infty+\Lip(q)\leq1}
 \left|\int_Iq\,d(\mu-\nu)\right|,
\end{equation}
where the test functions are real-valued. We also fix the increasing modulus
\begin{equation}\label{eq:modulus}
 \varpi(0)=0,\qquad
 \varpi(d)=d\log(e+d^{-1}),\quad d>0.
\end{equation}
In particular, $\varpi(d)\to0$ as $d\downarrow0$, $\varpi(d)\geq d$, and $\varpi(d)=O(d^\alpha)$ near zero for every $0<\alpha<1$.

\begin{lemma}[Metrization on bounded measure classes]\label{lem:BLweakstar}
On $\{\mu:\norm{\mu}_{\mathrm{TV}}\leq M\}$, the metric $\dBL$ induces the weak-star topology.
\end{lemma}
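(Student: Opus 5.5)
Since $C(I)$ is separable and $I=[-r,0]$ is compact, the plan is to show that the two topologies have the same convergent sequences on the ball $\{\norm{\mu}_{\mathrm{TV}}\leq M\}$, and then upgrade to equality of topologies. The upgrade works because both are metrizable there. The weak-star topology is metrizable on TV-bounded sets, by Banach--Alaoglu together with separability of $C(I)$. Alternatively, one shows directly that the identity map from $(\text{ball},\text{weak-star})$ to $(\text{ball},\dBL)$ is a continuous bijection from a compact space onto a Hausdorff space, hence a homeomorphism. The second route is cleaner, and I will use it.

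First, $\dBL$ is a genuine metric. It is finite because it is bounded by $\norm{\mu}_{\mathrm{TV}}+\norm{\nu}_{\mathrm{TV}}$. It separates points because bounded Lipschitz functions are uniformly dense in $C(I)$, and a measure that annihilates a dense subset of $C(I)$ vanishes.

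Second, I show that weak-star convergence implies $\dBL$-convergence. It suffices to prove continuity of $\mu\mapsto\dBL(\mu,\nu)$ at each point for the weak-star topology, or sequential continuity, which is enough since the weak-star topology on the ball is metrizable. Let $\mu_n\weakstar\mu$ with $\norm{\mu_n}_{\mathrm{TV}}\leq M$. The class $\mathcal Q=\{q:\norm{q}_\infty+\Lip(q)\leq1\}$ is uniformly bounded and equicontinuous on the compact interval $I$, so by Arzel\`a--Ascoli it is totally bounded in $C(I)$. Given $\varepsilon>0$, choose a finite $\varepsilon$-net $q_1,\dots,q_N$ of $\mathcal Q$. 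For any $q\in\mathcal Q$ pick $q_j$ with $\norm{q-q_j}_\infty\leq\varepsilon$; then
\[
 \Bigl|\int_I q\,d(\mu_n-\mu)\Bigr|\leq 2M\varepsilon+\max_{j}\Bigl|\int_I q_j\,d(\mu_n-\mu)\Bigr|.
\]
The maximum over finitely many $j$ tends to zero, so $\limsup_n\dBL(\mu_n,\mu)\leq2M\varepsilon$. Since $\varepsilon$ is arbitrary, $\dBL(\mu_n,\mu)\to0$. This equicontinuity step is the main point. The uniform bound $M$ is essential here, because it controls the error term $2M\varepsilon$.

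Third, I conclude. The ball is weak-star compact by Banach--Alaoglu, and $\dBL$ is Hausdorff. The previous step shows the identity map is continuous from the weak-star topology to the $\dBL$ topology. A continuous bijection from a compact space onto a Hausdorff space is a homeomorphism, so the two topologies coincide on the ball.

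For a direct proof of the converse direction, suppose $\dBL(\mu_n,\mu)\to0$ with all measures in the ball. Then $\int_I q\,d\mu_n\to\int_I q\,d\mu$ for every Lipschitz $q$, after rescaling $q$ into $\mathcal Q$. For general $g\in C(I)$, approximate $g$ uniformly by a Lipschitz $q$ within $\varepsilon$; the resulting error is at most $2M\varepsilon$. This again uses the TV bound. Without that bound, $\dBL$-convergence would not imply weak-star convergence.
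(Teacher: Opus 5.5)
Your proposal is correct and follows the paper's proof. Arzel\`a--Ascoli plus a finite-net argument, with the error controlled by the common total-variation bound, gives weak-star $\Rightarrow$ $\dBL$ convergence, and uniform density of Lipschitz functions in $C(I)$ gives the converse. Your compact-to-Hausdorff upgrade via Banach--Alaoglu is a valid shortcut that makes the converse automatic, but the direct density argument you also give is exactly the paper's route.
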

\begin{proof}
The unit ball of scalar bounded-Lipschitz functions is compact in $C(I)$ by Arzel\`a--Ascoli. Weak-star convergence with bounded total variation is uniform on this ball, by a finite-net argument, and hence implies convergence in $\dBL$. Conversely, Lipschitz functions are uniformly dense in $C(I)$. Approximate a continuous test function by a Lipschitz function and use the common total-variation bound for the approximation error.
\end{proof}

\begin{lemma}[Vector-valued weak-star convergence]\label{lem:weakstaruniform}
Suppose $\mu_n\weakstar\mu$ in $\M(I)$. If $Q\subset C(I;H)$ is compact, then
\begin{equation}\label{eq:uniformcompactmeasure}
 \sup_{g\in Q}\left\|\int_Ig\,d(\mu_n-\mu)\right\|_H\longrightarrow0.
\end{equation}
In particular, for a fixed $u\in C([-r,T];H)$,
\begin{equation}\label{eq:fixedtrajectoryweakstar}
 \sup_{0\leq t\leq T}
 \left\|\int_I B(u(t+\theta))\,d(\mu_n-\mu)(\theta)\right\|_H
 \longrightarrow0.
\end{equation}
\end{lemma}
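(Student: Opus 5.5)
The plan is a finite-net argument, the vector-valued analogue of the one used for Lemma~\ref{lem:BLweakstar}. Since $\mu_n\weakstar\mu$, the uniform boundedness principle on the Banach space $C(I)$ gives $M:=\sup_n\norm{\mu_n}_{\mathrm{TV}}+\norm{\mu}_{\mathrm{TV}}<\infty$. For every $g\in C(I;H)$ the Bochner integral satisfies
\[
\Bigl\|\int_I g\,d\nu\Bigr\|_H\leq\norm{\nu}_{\mathrm{TV}}\sup_{\theta\in I}\norm{g(\theta)}_H .
\]
So the linear maps $g\mapsto\int_I g\,d(\mu_n-\mu)$ are uniformly $M$-Lipschitz from $C(I;H)$ into $H$. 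Given this, it suffices to prove pointwise convergence for each fixed $g$. Uniformity on a compact $Q$ then follows from a finite $\varepsilon/(3M)$-net $g_1,\dots,g_N$ of $Q$: for $g\in Q$, pick $g_j$ within $\varepsilon/(3M)$, bound the difference term by $M\cdot\varepsilon/(3M)$, and take $n$ large for the finitely many $g_j$.

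The pointwise convergence for fixed $g\in C(I;H)$ is the only genuinely vector-valued step, and the main obstacle. I would handle it by approximating $g$ uniformly by finite-rank continuous functions of the form $\sum_k q_k(\theta)e_k$ with $q_k\in C(I)$ scalar and $e_k\in H$. For such functions, $\int_I g\,d(\mu_n-\mu)=\sum_k\bigl(\int_I q_k\,d(\mu_n-\mu)\bigr)e_k\to0$ directly by weak-star convergence.

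To construct the approximation, note that $g(I)$ is compact in $H$. Take a continuous partition of unity $\{\psi_k\}$ on $I$ subordinate to intervals of length less than $\delta$, with nodes $\theta_k$, where $\delta$ comes from uniform continuity of $g$. Set $g_\delta=\sum_k\psi_k(\theta)g(\theta_k)$. Then $\sup_\theta\norm{g-g_\delta}_H\leq\varepsilon$, because the $\psi_k$ are nonnegative, sum to one, and only indices with $|\theta-\theta_k|<\delta$ contribute. Combining this with the uniform $M$-bound gives $\limsup_n\norm{\int_I g\,d(\mu_n-\mu)}_H\leq 2M\varepsilon$ for every $\varepsilon$, hence convergence. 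An alternative is to test against an orthonormal basis and use compactness of $g(I)$ to control the tails uniformly, but the partition-of-unity route avoids any separability concerns.

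For \eqref{eq:fixedtrajectoryweakstar}, apply the first part to
\[
Q=\{\theta\mapsto B(u(t+\theta)):t\in[0,T]\}.
\]
The map $t\mapsto u_t$ is continuous from $[0,T]$ into $C(I;H)$ by uniform continuity of $u$ on $[-r,T]$. Composition with $B$ preserves this continuity: $u$ is bounded by some $R$, and $B$ is $L_B(R)$-Lipschitz on that ball by Assumption~\ref{ass:main}. Hence $Q$ is the continuous image of the compact interval $[0,T]$, so it is compact in $C(I;H)$, and the supremum over $t$ tends to zero.
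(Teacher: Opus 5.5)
Your proposal is correct and follows essentially the same route as the paper. You use the uniform boundedness principle, approximate $g$ by finite-rank functions (your partition of unity with nodal values $g(\theta_k)$ is the paper's piecewise-linear interpolation), control the remainder with the common total-variation bound, and pass to compact $Q$ by a finite net. The second part follows, as in the paper, from compactness of the image of $[0,T]$ under the continuous map $t\mapsto B(u_t)$ into $C(I;H)$.
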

\begin{proof}
Uniform boundedness gives $\sup_n\norm{\mu_n}_{\mathrm{TV}}<\infty$. For fixed $g\in C(I;H)$, piecewise-linear interpolation on a fine partition approximates $g$ uniformly by a finite sum $\sum_jq_jv_j$, with scalar continuous $q_j$ and vectors $v_j\in H$. Scalar weak-star convergence gives norm convergence of the integrals of this finite sum; the common total-variation bound controls the remainder. The operators $g\mapsto\int_Ig\,d(\mu_n-\mu)$ have uniformly bounded norms, so a finite net in $Q$ proves \eqref{eq:uniformcompactmeasure}. Finally, the map $t\mapsto[\theta\mapsto B(u(t+\theta))]$ is continuous from $[0,T]$ into $C(I;H)$ and therefore has compact image.
\end{proof}

\section{Global weak well-posedness}\label{sec:wellposed}

\begin{theorem}\label{thm:wellposed}
Under Assumption~\ref{ass:main}, every $\varphi\in X$ and $\mu\in\M(I)$ determine a unique global weak solution of \eqref{eq:main}. For every $T>0$,
\begin{equation}\label{eq:finiteapriori}
 \sup_{t\in[-r,T]}\norm{u(t)}_H^2+
 \int_0^T\norm{u(t)}_V^2\,dt\leq C_T.
\end{equation}
The bound is uniform when $\varphi$ ranges over a bounded subset of $X$ and $\mu$ ranges over a total-variation-bounded subset of $\M(I)$.
\end{theorem}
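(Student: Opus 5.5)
The plan has four steps: an a priori energy estimate driven only by the structural bounds \eqref{eq:structural}, local existence by a Galerkin scheme, uniqueness, and continuation to a global solution.

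\textbf{A priori bound.} Formally test \eqref{eq:main} with $u$. Using $(f(u),u)_H\ge a_f\norm{u}_H^2-c_0$, the estimate $\dual{h}{u}\le\frac14\norm{u}_V^2+\norm{h}_{V'}^2$, and \eqref{eq:delaygrowth}, we obtain
\[
 \tfrac12\tfrac{d}{dt}\norm{u}_H^2+\tfrac34\norm{u}_V^2
 \le (a_f)_-\norm{u}_H^2+c_0+\norm{h}_{V'}^2
 +\norm{\mu}_{\mathrm{TV}}\Bigl(b_0+b_1\sup_{[t-r,t]}\norm{u}_H\Bigr)\norm{u}_H .
\]
Young's inequality bounds the delay term by $C+C\sup_{[t-r,t]}\norm{u}_H^2$. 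Integrating and applying Lemma~\ref{lem:delaygronwall} to $y=\norm{u}_H^2$ gives
\[
 \sup_{[-r,T]}\norm{u}_H^2\le \max\{\norm{\varphi}_X^2,a\}e^{bT},
\]
where $a,b$ depend only on $\norm{h}_{V'}$, $c_0$, $(a_f)_-$, $b_0$, $b_1$ and an upper bound for $\norm{\mu}_{\mathrm{TV}}$. Integrating the same inequality once more controls $\int_0^T\norm{u}_V^2$. This gives \eqref{eq:finiteapriori} with the claimed uniformity. The test is justified for weak solutions by the Lions--Magenes identity $\frac{d}{dt}\norm{u}_H^2=2\dual{\partial_tu}{u}$, which holds since $u\in L^2(V)$ and $\partial_tu\in L^2(V')$.

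\textbf{Existence.} I would use Faedo--Galerkin approximation in the Dirichlet eigenbasis. Let $P_m$ be the orthogonal projection onto the first $m$ eigenfunctions and use the projected history $P_m\varphi$. The projected system is a finite-dimensional delay ODE whose nonlinearities are locally Lipschitz, so it has local solutions. The bound above holds uniformly in $m$, since $\norm{P_m\varphi}_X\le\norm{\varphi}_X$, and hence the solutions are global. Set $R=\sqrt{C_T}$. Then \eqref{eq:fball} and \eqref{eq:delaygrowth} bound $f(u_m)$ and $\calD_\mu u_m$ in $L^\infty(0,T;H)$, and the equation bounds $\partial_tu_m$ in $L^2(0,T;V')$. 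By Aubin--Lions, a subsequence converges strongly in $L^2(0,T;H)$ and weakly in $L^2(V)$, with $\partial_t u_m$ converging weakly in $L^2(V')$.

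\textbf{Main obstacle: passing to the limit.} The delicate point is the nonlinear and delay terms, because $f$ is only Lipschitz on $H$-balls and not weakly continuous. Strong $L^2(0,T;H)$ convergence together with the $L^\infty(H)$ bound $R$ gives $f(u_m)\to f(u)$ in $L^2(0,T;H)$ via $L_f(R)$. For the delay term the history also matters: $P_m\varphi\to\varphi$ uniformly on $I$ because $\varphi(I)$ is compact, so $u_m\to u$ in $L^2(-r,T;H)$, and \eqref{eq:delayLtwo} then gives $\calD_\mu u_m\to\calD_\mu u$ in $L^2(0,T;H)$. Continuity $u\in C([0,T];H)$ follows from $u\in L^2(V)$ and $\partial_tu\in L^2(V')$. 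Continuity at $t=0$ requires showing $u(0)=\varphi(0)$, which I would obtain from $u_m(0)=P_m\varphi(0)\to\varphi(0)$ and the weak continuity of the limit.

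\textbf{Uniqueness and global existence.} For two solutions bounded by $R$ on $[-r,T]$, let $w=u-v$. Testing the difference equation with $w$ and using $(f(u)-f(v),w)_H\ge -L_f(R)\norm{w}_H^2$ together with \eqref{eq:delaylip} yields
\[
 \norm{w(t)}_H^2\le C\int_0^t\sup_{[s-r,s]}\norm{w}_H^2\,ds .
\]
Since $w=0$ on $I$, Lemma~\ref{lem:delaygronwall} gives $w\equiv0$. Because $T$ is arbitrary and the solutions are unique, they patch together into a single global solution.
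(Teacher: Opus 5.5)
Your proposal is correct and follows essentially the same route as the paper: Faedo--Galerkin approximation with projected histories $P_m\varphi$, a uniform energy estimate closed by the delay Gronwall lemma, Aubin--Lions compactness, and strong $L^2(0,T;H)$ convergence of $f(u_m)$ and $\calD_\mu u_m$ via $\norm{P_m\varphi-\varphi}_X\to0$ and \eqref{eq:delayLtwo}. It then identifies $u(0)=\varphi(0)$ through weak convergence of the traces and proves uniqueness by the difference energy estimate, with one harmless slip: the constant $a$ in your Gronwall step also depends on $\norm{\varphi(0)}_H$ and $T$ (for instance $a=\norm{\varphi(0)}_H^2+CT$), which does not affect the claimed uniformity.
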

\begin{proof}
Let $\{e_j\}_{j\geq1}$ be an orthonormal eigenbasis of $A$, and let $P_m$ be the projection onto $H_m=\operatorname{span}\{e_1,\dots,e_m\}$. Compactness of $\varphi(I)$ and strong convergence $P_m\to I$ imply
\begin{equation}\label{eq:historyprojection}
 \norm{P_m\varphi-\varphi}_{C(I;H)}\longrightarrow0.
\end{equation}
Prescribe $u_m=P_m\varphi$ on $I$ and solve
\begin{equation}\label{eq:galerkin}
 (\partial_tu_m,e_j)_H+(Au_m,e_j)_H+(f(u_m),e_j)_H
 =(\calD_\mu u_m,e_j)_H+\dual{h}{e_j},\quad j\leq m.
\end{equation}
The finite-dimensional history functional is Lipschitz on bounded sets. An atom at zero contributes the instantaneous term $\mu(\{0\})P_mB(u_m(t))$. Standard functional differential equation theory therefore gives a local solution.

Testing by $u_m$, applying \eqref{eq:structural} and \eqref{eq:delaygrowth}, and using Young's inequality yields
\begin{equation}\label{eq:finiteenergy}
 \frac{d}{dt}\norm{u_m(t)}_H^2+\norm{u_m(t)}_V^2
 \leq C\left(1+\sup_{s\in[t-r,t]}\norm{u_m(s)}_H^2\right).
\end{equation}
Here $C$ depends on the coercivity and feedback constants, $\norm{h}_{V'}$, and $\norm{\mu}_{\mathrm{TV}}$, but not on $m$. Lemma~\ref{lem:delaygronwall} bounds the $H$-norm on each finite interval, and integration of \eqref{eq:finiteenergy} then bounds the $L^2(0,T;V)$ norm. These estimates prevent finite-time blow-up.

Let $R_T$ be a common $H$-bound. Equation \eqref{eq:fball} bounds $f(u_m)$ in $L^\infty(0,T;H)$ without an upper growth assumption on $f$. Lemma~\ref{lem:delaybounds} gives the same bound for $\calD_\mu u_m$. The spectral projections are contractions on $V$ and $V'$, and
\[
 h_m=\sum_{j=1}^m\dual{h}{e_j}e_j
 \longrightarrow h\quad\text{in }V'.
\]
Thus
$\partial_tu_m=-Au_m-P_mf(u_m)+P_m\calD_\mu u_m+h_m$
is bounded in $L^2(0,T;V')$.

After passage to a subsequence,
\[
 \begin{split}
 u_m&\rightharpoonup u \quad\text{in }L^2(0,T;V),\\
 u_m&\rightharpoonup^*u\quad\text{in }L^\infty(0,T;H),\\
 \partial_tu_m&\rightharpoonup\partial_tu
       \quad\text{in }L^2(0,T;V').
 \end{split}
\]
The Aubin--Lions--Simon lemma \cite{simon1987} gives strong convergence in $L^2(0,T;H)$. Lions' continuity lemma gives $u\in C([0,T];H)$. The trace map at zero is continuous from
$L^2(0,T;V)\cap H^1(0,T;V')$ to $H$, so
$u_m(0)\rightharpoonup u(0)$ in $H$. Since $u_m(0)=P_m\varphi(0)\to\varphi(0)$ strongly, $u(0)=\varphi(0)$. Joining $u$ to $\varphi$ therefore gives a continuous function on $[-r,T]$.

By \eqref{eq:historyprojection}, the joined trajectories converge strongly in $L^2(-r,T;H)$. Bounded-ball Lipschitz continuity gives
$f(u_m)\to f(u)$ in $L^2(0,T;H)$, and \eqref{eq:delayLtwo} gives
$\calD_\mu u_m\to\calD_\mu u$ in that space. We can now pass to the limit in \eqref{eq:galerkin}, first against finite eigenfunction combinations and then against arbitrary $v\in V$. Lower semicontinuity proves \eqref{eq:finiteapriori}.

For uniqueness, let $w=u-v$ be the difference of two solutions and let $R_T$ bound their $H$-norms. The energy identity and the local bounds imply
\begin{equation}\label{eq:uniquenessenergy}
 \frac{d}{dt}\norm{w(t)}_H^2+\norm{w(t)}_V^2
 \leq C_T\sup_{s\in[t-r,t]}\norm{w(s)}_H^2.
\end{equation}
For equal histories, Lemma~\ref{lem:delaygronwall} gives $w=0$. Solutions constructed on different finite intervals agree by uniqueness, defining the global solution.
\end{proof}

\begin{corollary}[Continuous semiflow]\label{cor:initialdata}
Define $S_\mu(t)\varphi=u_t$. These maps form a jointly continuous semiflow on $X$. For each bounded $D\subset X$, $T>0$, and $M\geq0$,
\begin{equation}\label{eq:initialdata}
 \sup_{0\leq t\leq T}\norm{S_\mu(t)\varphi-S_\mu(t)\psi}_X
 \leq C_{T,D,M}\norm{\varphi-\psi}_X
\end{equation}
for $\varphi,\psi\in D$ and $\norm{\mu}_{\mathrm{TV}}\leq M$.
\end{corollary}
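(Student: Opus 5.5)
The plan is to derive the Lipschitz estimate \eqref{eq:initialdata} from the energy inequality for the difference of two solutions. Joint continuity and the semigroup property then follow from uniqueness and continuity in time.

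Fix a bounded $D\subset X$, $T>0$ and $M\geq0$. By Theorem~\ref{thm:wellposed}, there is a radius $R_T$, depending only on $D$, $T$ and $M$, such that the solutions $u,v$ with histories $\varphi,\psi\in D$ satisfy $\sup_{[-r,T]}\norm{u}_H,\sup_{[-r,T]}\norm{v}_H\leq R_T$, uniformly over $\norm{\mu}_{\mathrm{TV}}\leq M$.

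Set $w=u-v$. Since $w\in L^2(0,T;V)\cap H^1(0,T;V')$, Lions' lemma gives the energy identity
\[
 \tfrac12\tfrac{d}{dt}\norm{w}_H^2+\norm{w}_V^2=-(f(u)-f(v),w)_H+(\calD_\mu u-\calD_\mu v,w)_H .
\]
Estimate the right-hand side as follows:
\begin{itemize}
\item for the reaction term, use the one-sided bound of Remark~\ref{rem:scope}, $-(f(u)-f(v),w)_H\leq L_f(R_T)\norm{w}_H^2$;
\item for the delay term, use \eqref{eq:delaylip} together with Young's inequality.
\end{itemize}
This gives
\[
 \frac{d}{dt}\norm{w(t)}_H^2\leq C\sup_{s\in[t-r,t]}\norm{w(s)}_H^2,\qquad C=2L_f(R_T)+ML_B(R_T)+M L_B(R_T),
\]
which is \eqref{eq:uniquenessenergy}. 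The constant $C$ is uniform in $\mu$ with $\norm{\mu}_{\mathrm{TV}}\leq M$.

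Integrating from $0$ to $t$ gives $y(t)=\norm{w(t)}_H^2\leq\norm{\varphi(0)-\psi(0)}_H^2+C\int_0^t\sup_{[s-r,s]}y$. Lemma~\ref{lem:delaygronwall} with $a=\norm{\varphi-\psi}_X^2$ then yields $\sup_{[-r,T]}\norm{w}_H\leq e^{CT/2}\norm{\varphi-\psi}_X$. Since $\norm{S_\mu(t)\varphi-S_\mu(t)\psi}_X\leq\sup_{[-r,t]}\norm{w}_H$, this is \eqref{eq:initialdata} with $C_{T,D,M}=e^{CT/2}$.

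The semigroup identity $S_\mu(t+s)=S_\mu(t)S_\mu(s)$ follows from uniqueness. Indeed, $\tau\mapsto u(s+\tau)$ is a weak solution with history $u_s\in X$, because the delay operator at time $s+\tau$ depends only on the segment $u_{s+\tau}$. We also have $S_\mu(0)=\mathrm{id}$.

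For joint continuity at $(t_0,\varphi_0)$, split
\[
 \norm{S_\mu(t)\varphi-S_\mu(t_0)\varphi_0}_X\leq\norm{S_\mu(t)\varphi-S_\mu(t)\varphi_0}_X+\norm{S_\mu(t)\varphi_0-S_\mu(t_0)\varphi_0}_X .
\]
Apply \eqref{eq:initialdata} to the first term, taking $D$ to be a ball around $\varphi_0$. The second term tends to zero because $u\in C([-r,T];H)$ is uniformly continuous on the compact interval $[-r,T]$, so $t\mapsto u_t$ is continuous into $X$.

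The only delicate point is justifying the energy identity for $w$ and the one-sided reaction bound without any growth assumption on $f$. Both rest on the a priori $H$-bound $R_T$ from Theorem~\ref{thm:wellposed}, which allows the bounded-ball Lipschitz constants to be used.
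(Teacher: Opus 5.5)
Your proposal is correct and follows essentially the same route as the paper. Like the paper, you use the difference energy inequality \eqref{eq:uniquenessenergy} on the common $H$-ball with Lemma~\ref{lem:delaygronwall} for the Lipschitz bound, uniqueness plus time translation for the semiflow identity, and continuity of $t\mapsto u_t$ together with the locally uniform estimate for joint continuity; you simply make the intermediate constants explicit, giving $C_{T,D,M}=e^{CT/2}$.
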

\begin{proof}
The finite-time bound is uniform over these histories and measures. Apply \eqref{eq:uniquenessenergy} and Lemma~\ref{lem:delaygronwall} with the nonzero initial difference. The history norm is bounded by $\sup_{s\in[-r,T]}\norm{u^\varphi(s)-u^\psi(s)}_H$. Uniqueness and time translation give the semiflow identity. For fixed $\varphi$, continuity of $u$ gives continuity of $t\mapsto u_t$ in $X$; combining this with the locally uniform estimate \eqref{eq:initialdata} proves joint continuity.
\end{proof}

\section{Quantitative stability with respect to the delay measure}\label{sec:stability}

For trajectories with the weak-solution regularity, write
\[
 \norm{w}_{\mathcal E_T}
 =\norm{w}_{C([-r,T];H)}
  +\norm{w}_{L^2(0,T;V)}
  +\norm{\partial_tw}_{L^2(0,T;V')}.
\]
We first record an energy estimate that also gives strong convergence in this norm.

\begin{lemma}[Stability through a reference residual]\label{lem:residual}
Fix $T>0$, a bounded set $D\subset X$, and $M\geq0$. Let $u_\mu^\varphi,u_\nu^\psi$ be solutions with $\varphi,\psi\in D$ and $\norm{\mu}_{\mathrm{TV}},\norm{\nu}_{\mathrm{TV}}\leq M$. Define
\begin{equation}\label{eq:residual}
 \rho_{\mu,\nu}(t)
 =\int_I B(u_\nu^\psi(t+\theta))\,d(\mu-\nu)(\theta).
\end{equation}
Then
\begin{equation}\label{eq:residualestimate}
 \norm{u_\mu^\varphi-u_\nu^\psi}_{\mathcal E_T}
 \leq C_{T,D,M}
 \left(\norm{\varphi-\psi}_X+
       \norm{\rho_{\mu,\nu}}_{L^2(0,T;H)}\right).
\end{equation}
\end{lemma}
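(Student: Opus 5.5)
We write $u=u_\mu^\varphi$, $v=u_\nu^\psi$, $w=u-v$, and run an energy estimate for $w$ in which the kernel mismatch appears only through the residual $\rho_{\mu,\nu}$. By Theorem~\ref{thm:wellposed}, both trajectories are bounded in $C([-r,T];H)$ by a common $R=R_{T,D,M}$, uniformly over the admissible data. Subtracting the weak formulations gives, for a.e.\ $t$,
\[
 \partial_tw+Aw+f(u)-f(v)=\calD_\mu u-\calD_\mu v+\rho_{\mu,\nu}
 \quad\text{in }V'.
\]
The key step is the splitting
\[
 \calD_\mu u-\calD_\nu v=(\calD_\mu u-\calD_\mu v)+\int_IB(v(t+\theta))\,d(\mu-\nu)(\theta),
\]
where the second term is exactly $\rho_{\mu,\nu}$. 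This form is chosen so that the residual is evaluated along the reference solution $v$ only, and hence needs no regularity of $u$ beyond boundedness.

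For the $C([-r,T];H)$ and $L^2(0,T;V)$ parts, we test with $w$. This is legitimate by Lions' lemma, which gives $\frac{d}{dt}\norm{w}_H^2=2\dual{\partial_tw}{w}$. We then use the bounded-ball Lipschitz estimate $(f(u)-f(v),w)_H\geq-L_f(R)\norm{w}_H^2$ from Remark~\ref{rem:scope} and \eqref{eq:delaylip}, and apply Young's inequality to $(\rho,w)_H\leq\frac12\norm{\rho}_H^2+\frac12\norm{w}_H^2$. This yields
\[
 \frac{d}{dt}\norm{w}_H^2+2\norm{w}_V^2\leq C\sup_{s\in[t-r,t]}\norm{w(s)}_H^2+\norm{\rho_{\mu,\nu}(t)}_H^2 .
\]
Integrating from $0$ gives
\[
 \norm{w(t)}_H^2\leq\norm{\varphi(0)-\psi(0)}_H^2+\norm{\rho}_{L^2(0,T;H)}^2+C\int_0^t\sup_{[s-r,s]}\norm{w}_H^2\,ds .
\]
Lemma~\ref{lem:delaygronwall} with $a=\norm{\varphi-\psi}_X^2+\norm{\rho}_{L^2(0,T;H)}^2$ then bounds $\sup_{[-r,T]}\norm{w}_H^2$ by $e^{CT}a$. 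Substituting this back into the integrated inequality bounds $\int_0^T\norm{w}_V^2$ by the same quantity, and taking square roots gives the first two terms of the $\mathcal E_T$ norm.

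For the time derivative, we read off $\partial_tw=-Aw-(f(u)-f(v))+(\calD_\mu u-\calD_\mu v)+\rho$ in $V'$. This gives
\[
 \norm{\partial_tw}_{L^2(0,T;V')}\leq\norm{w}_{L^2(0,T;V)}+\lambda_1^{-1/2}\bigl(L_f(R)\norm{w}_{L^2(0,T;H)}+ML_B(R)\norm{w}_{L^2(-r,T;H)}+\norm{\rho}_{L^2(0,T;H)}\bigr),
\]
using \eqref{eq:poincare} and \eqref{eq:delayLtwo}. The history part satisfies $\norm{w}_{L^2(-r,0;H)}\leq r^{1/2}\norm{\varphi-\psi}_X$, so everything is controlled by the previous step.

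The only delicate point is uniformity: the constants must depend on $T$, $D$ and $M$ but not on the particular $\mu,\nu,\varphi,\psi$. This holds because the a priori bound $R$ in \eqref{eq:finiteapriori} is uniform over bounded $D$ and total-variation-bounded kernel classes, and $L_f(R)$ and $L_B(R)$ depend only on $R$. I expect no serious obstacle beyond keeping the splitting in the reference-residual form.
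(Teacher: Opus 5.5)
Your proposal is correct and follows essentially the same route as the paper. You split the delay difference into $\calD_\mu u-\calD_\mu v$ plus the reference residual, test the difference equation by $w$ on the common $H$-ball, close with the delay Gronwall lemma, and then read off $\partial_tw$ in $L^2(0,T;V')$ from the equation using \eqref{eq:poincare}. The only difference is that you make explicit the details the paper leaves implicit: the choice $a=\norm{\varphi-\psi}_X^2+\norm{\rho_{\mu,\nu}}_{L^2(0,T;H)}^2$ in Lemma~\ref{lem:delaygronwall}, and the use of \eqref{eq:delayLtwo} for the delay term in the derivative bound.
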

\begin{proof}
Let $w=u_\mu^\varphi-u_\nu^\psi$. Subtract the equations and split the delay difference into
$\calD_\mu u_\mu^\varphi-\calD_\mu u_\nu^\psi+\rho_{\mu,\nu}$.
On the common finite-time $H$-ball, testing by $w$ and using Young's inequality gives
\[
 \frac{d}{dt}\norm{w(t)}_H^2+\norm{w(t)}_V^2
 \leq C_{T,D,M}\sup_{s\in[t-r,t]}\norm{w(s)}_H^2
       +C\norm{\rho_{\mu,\nu}(t)}_H^2.
\]
Integration and Gronwall bound both the supremum norm and the $L^2(0,T;V)$ norm of $w$. In the difference equation, $Aw$ is controlled in $L^2(0,T;V')$ by the latter norm. The reaction difference and the first delay difference are controlled in $L^2(0,T;H)$ by the supremum norm, while the residual is already in that space. The embedding in \eqref{eq:poincare} therefore bounds $\partial_tw$ and proves \eqref{eq:residualestimate}.
\end{proof}

\begin{theorem}[Total-variation Lipschitz stability]\label{thm:tvcontinuous}
Under the hypotheses of Lemma~\ref{lem:residual},
\begin{equation}\label{eq:tvrate}
 \norm{u_\mu^\varphi-u_\nu^\psi}_{\mathcal E_T}
 \leq C_{T,D,M}
 \left(\norm{\varphi-\psi}_X+\norm{\mu-\nu}_{\mathrm{TV}}\right).
\end{equation}
In particular, the same bound holds for
$\sup_{0\leq t\leq T}\norm{S_\mu(t)\varphi-S_\nu(t)\psi}_X$.
\end{theorem}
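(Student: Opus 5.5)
The plan is to apply Lemma~\ref{lem:residual} directly and bound the residual $\rho_{\mu,\nu}$ in $L^2(0,T;H)$ by the total variation of $\mu-\nu$. First, Theorem~\ref{thm:wellposed} provides a finite-time bound: there is $R=R_{T,D,M}$ such that
\[
\sup_{s\in[-r,T]}\norm{u_\nu^\psi(s)}_H\leq R
\]
uniformly for $\psi\in D$ and $\norm{\nu}_{\mathrm{TV}}\leq M$. On $[-r,0]$ this bound is simply the bound on $D$ in $X$.

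Next, the growth bound \eqref{eq:delaygrowth} in Lemma~\ref{lem:delaybounds} can be used with $\mu$ replaced by the signed measure $\mu-\nu$ and $B$ evaluated along $u_\nu^\psi$. It gives, for every $t\in[0,T]$,
\[
\norm{\rho_{\mu,\nu}(t)}_H\leq\norm{\mu-\nu}_{\mathrm{TV}}\,(b_0+b_1R).
\]
This holds because the Bochner integral against a signed measure is bounded in norm by the sup of the integrand times the total variation of the measure. Integrating over $[0,T]$ then yields
\[
\norm{\rho_{\mu,\nu}}_{L^2(0,T;H)}\leq T^{1/2}(b_0+b_1R)\norm{\mu-\nu}_{\mathrm{TV}}.
\]
Substituting this into \eqref{eq:residualestimate} proves \eqref{eq:tvrate}, after enlarging the constant $C_{T,D,M}$.

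For the final statement about the semiflows, note that for $0\leq t\leq T$,
\[
\norm{S_\mu(t)\varphi-S_\nu(t)\psi}_X=\sup_{\theta\in I}\norm{u_\mu^\varphi(t+\theta)-u_\nu^\psi(t+\theta)}_H\leq\norm{u_\mu^\varphi-u_\nu^\psi}_{C([-r,T];H)}.
\]
The right-hand side is part of the $\mathcal E_T$ norm, so the semiflow bound follows from \eqref{eq:tvrate}.

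There is no serious obstacle here. The only point needing care is that the bound $R$ must be uniform over the whole class, including the histories on $[-r,0]$, and this uniformity is exactly what Theorem~\ref{thm:wellposed} provides.
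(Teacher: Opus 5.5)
Your proposal is correct and follows the paper's proof: the paper bounds $\norm{\rho_{\mu,\nu}(t)}_H\leq(b_0+b_1R_T)\norm{\mu-\nu}_{\mathrm{TV}}$ using the common trajectory bound and then applies Lemma~\ref{lem:residual}, exactly as you do. Your added step, that $\sup_{0\leq t\leq T}\norm{S_\mu(t)\varphi-S_\nu(t)\psi}_X$ is dominated by the $C([-r,T];H)$ part of the $\mathcal E_T$ norm, correctly fills in the detail the paper leaves implicit.
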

\begin{proof}
If $R_T$ is the common trajectory bound, then
$\norm{\rho_{\mu,\nu}(t)}_H
\leq(b_0+b_1R_T)\norm{\mu-\nu}_{\mathrm{TV}}$.
Apply Lemma~\ref{lem:residual}.
\end{proof}

Total variation does not capture the approximation of a Dirac measure by an absolutely continuous kernel. To quantify such limits for rough histories, we estimate the perturbation after integration against the heat semigroup.

\begin{lemma}[Regularity in the delay variable]\label{lem:averaging}
Let $E(t)=e^{-tA}$, $g\in C([-r,T];H)$, and $0\leq t\leq T$. Set
\begin{equation}\label{eq:averaged}
 G_{t,g}(\theta)=\int_0^t E(t-s)g(s+\theta)\,ds,\qquad \theta\in I.
\end{equation}
There is $C_A>0$, independent of $T,t,g$, such that
\begin{equation}\label{eq:Gbounds}
 \norm{G_{t,g}}_{C(I;H)}\leq\lambda_1^{-1}\norm{g}_\infty,\qquad
 \norm{G_{t,g}(\theta)-G_{t,g}(\eta)}_H
 \leq C_A\norm{g}_\infty\varpi(|\theta-\eta|).
\end{equation}
Here $\norm{g}_\infty=\norm{g}_{C([-r,T];H)}$.
\end{lemma}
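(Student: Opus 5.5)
The first bound in \eqref{eq:Gbounds} follows from the spectral contraction estimate $\norm{E(\tau)}_{\mathcal L(H)}\leq e^{-\lambda_1\tau}$. It gives
\[
 \norm{G_{t,g}(\theta)}_H\leq\norm{g}_\infty\int_0^te^{-\lambda_1(t-s)}\,ds\leq\lambda_1^{-1}\norm{g}_\infty .
\]
For the modulus, fix $\theta,\eta\in I$ and set $\delta=|\theta-\eta|$. We may assume $\eta=\theta+\delta$ with $\delta\in(0,r]$. If $\delta\geq1$, say, the first bound already gives $2\lambda_1^{-1}\norm{g}_\infty\lesssim\norm{g}_\infty\varpi(\delta)$, since $\varpi(\delta)\geq\delta\geq1$. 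So the main case is $\delta$ small.

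The idea is to use no continuity of $g$ at all and to move the shift onto the semigroup by substitution. Write $G_{t,g}(\eta)=\int_\delta^{t+\delta}E(t+\delta-\sigma)g(\sigma+\theta)\,d\sigma$ with $\sigma=s+\delta$. Subtracting $G_{t,g}(\theta)=\int_0^tE(t-\sigma)g(\sigma+\theta)\,d\sigma$ splits the difference into three pieces.
\begin{enumerate}[label=(\alph*)]
 \item Two boundary pieces, $\int_0^\delta E(t-\sigma)g(\sigma+\theta)\,d\sigma$ and $\int_t^{t+\delta}E(t+\delta-\sigma)g(\sigma+\theta)\,d\sigma$. These are intervals of length at most $\delta$, and they need care when $t<\delta$; the arguments stay in $[-r,T]$ because $\eta\leq0$ and $\sigma\leq t+\delta$. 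Each piece is bounded by $\delta\norm{g}_\infty$.
 \item A bulk piece $\int_\delta^t\bigl(E(t+\delta-\sigma)-E(t-\sigma)\bigr)g(\sigma+\theta)\,d\sigma$, present only when $t>\delta$.
\end{enumerate}

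The main obstacle is the bulk term, which I would handle with analytic-semigroup smoothing in the time variable. With $\tau=t-\sigma\in[0,t-\delta]$, two bounds are available: $\norm{E(\tau+\delta)-E(\tau)}\leq2$ always, and
\[
 \norm{E(\tau+\delta)-E(\tau)}=\Bigl\|\int_\tau^{\tau+\delta}AE(s)\,ds\Bigr\|\leq\int_\tau^{\tau+\delta}\frac{ds}{es}\leq\frac{\delta}{e\tau},
\]
using $\sup_{\lambda>0}\lambda e^{-\lambda s}=(es)^{-1}$. I would also keep the exponential decay, via $\norm{AE(s)}\leq C s^{-1}e^{-\lambda_1 s/2}$, so that the constant does not depend on $T$.

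Integrating $\min\{2,\,C\delta\tau^{-1}e^{-\lambda_1\tau/2}\}$ over $\tau\in(0,\infty)$, I split at $\tau=\delta$:
\begin{itemize}
 \item the part $\tau\in(0,\delta)$ contributes at most $2\delta$;
 \item the part $\tau\in(\delta,1)$ contributes $C\delta\log(1/\delta)$;
 \item the tail $\tau>1$ contributes $C\delta\int_1^\infty e^{-\lambda_1\tau/2}\,d\tau=C'\delta$.
\end{itemize}
The total is $\lesssim\delta(1+\log(1/\delta))\lesssim\varpi(\delta)$, because $\log(e+\delta^{-1})\geq\max\{1,\log\delta^{-1}\}$.

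Multiplying by $\norm{g}_\infty$ and adding the boundary terms gives the second bound in \eqref{eq:Gbounds}, with $C_A$ depending only on $\lambda_1$. The point to check carefully is the bookkeeping of the shifted integration limits in the case $t<\delta$, where the bulk integral is empty and only boundary terms remain. Their total length is at most $2\delta$, so that case is harmless.
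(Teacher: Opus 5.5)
Your proposal is correct and follows essentially the same route as the paper's proof: you move the shift onto the semigroup by a change of variables, split the difference into two endpoint intervals of length at most $|\theta-\eta|$ plus an overlap, and integrate the bound $\norm{E(s+a)-E(s)}_{\mathcal L(H)}\leq C\min\{1,a/s\}e^{-\lambda_1 s/2}$ with splits at $a$ and $1$. The only difference is cosmetic: you handle $|\theta-\eta|\geq1$ through the sup bound, whereas the paper bounds the tail integral by a constant depending on $\lambda_1$.
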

\begin{proof}
The spectral theorem gives
\begin{equation}\label{eq:analyticbounds}
 \norm{E(s)}_{\mathcal L(H)}\leq e^{-\lambda_1s},\qquad
 \norm{AE(s)}_{\mathcal L(H)}
 \leq Cs^{-1}e^{-\lambda_1s/2},\quad s>0.
\end{equation}
The first estimate proves the supremum bound. Moreover, integration of the second estimate, together with the direct bound on $E$, gives
\begin{equation}\label{eq:semigroupshift}
 \norm{E(s+a)-E(s)}_{\mathcal L(H)}
 \leq C\min\{1,a/s\}e^{-\lambda_1s/2},
 \qquad a,s>0.
\end{equation}

Take $\eta=\theta+a$, $a>0$, and change variables in \eqref{eq:averaged}:
\[
 G_{t,g}(\theta)
 =\int_\theta^{t+\theta}E(t+\theta-q)g(q)\,dq.
\]
If $a\geq t$, each integral has norm at most $t\norm{g}_\infty$, giving a bound $2a\norm{g}_\infty$. If $a<t$, the two nonoverlapping endpoint intervals contribute at most $2a\norm{g}_\infty$. On the overlap, \eqref{eq:semigroupshift} gives
\begin{align*}
 &\norm{G_{t,g}(\theta+a)-G_{t,g}(\theta)}_H\\
 &\quad\leq
 2a\norm{g}_\infty+
 C\norm{g}_\infty\int_0^\infty
       \min\{1,a/s\}e^{-\lambda_1s/2}\,ds.
\end{align*}
For $0<a\leq1$, splitting the integral at $a$ and $1$ bounds it by
$C_Aa(1+\log(1/a))$. For $a>1$ the integral is bounded by a constant depending on $\lambda_1$. Both cases give the second estimate in \eqref{eq:Gbounds}.
\end{proof}

\begin{lemma}[Integrated kernel perturbation]\label{lem:integratedkernel}
For every $M\geq0$ there is $C_{A,r,M}>0$ such that, if $\norm{\mu}_{\mathrm{TV}},\norm{\nu}_{\mathrm{TV}}\leq M$, then
\begin{equation}\label{eq:integratedkernel}
 \sup_{0\leq t\leq T}
 \left\|\int_I G_{t,g}(\theta)\,d(\mu-\nu)(\theta)\right\|_H
 \leq C_{A,r,M}\norm{g}_\infty\varpi(\dBL(\mu,\nu)).
\end{equation}
The constant is independent of $T$.
\end{lemma}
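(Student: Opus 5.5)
We must bound $\|\int_I G\,d(\mu-\nu)\|_H$, where $G=G_{t,g}\in C(I;H)$. By Lemma~\ref{lem:averaging}, $G$ is bounded by $\lambda_1^{-1}\norm{g}_\infty$ and has modulus $C_A\norm{g}_\infty\varpi$. The difficulty is that $\dBL$ only tests scalar Lipschitz functions, while $G$ is $H$-valued and only $\varpi$-continuous. The plan is to regularize $G$ at a scale $\delta$, pair the regularization with $\mu-\nu$ using duality, and optimize over $\delta$. Write $d=\dBL(\mu,\nu)$. If $d=0$ then $\mu=\nu$ and there is nothing to prove. Since $d\le 2M$, we may also assume $d$ is small by absorbing the large-$d$ case into the constant: the crude bound $2M\lambda_1^{-1}\norm{g}_\infty\le C\norm{g}_\infty\varpi(d)$ holds whenever $d\ge d_0$, because $\varpi$ is increasing.

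\emph{Step 1 (scalarization).} For a unit vector $e\in H$,
\[
\Bigl(e,\int_I G\,d(\mu-\nu)\Bigr)_H=\int_I q_e\,d(\mu-\nu),\qquad q_e(\theta)=(e,G(\theta))_H .
\]
Each $q_e$ is real-valued, bounded by $\lambda_1^{-1}\norm{g}_\infty$, and has modulus $C_A\norm{g}_\infty\varpi$, uniformly in $e$. Taking the supremum over $e$ recovers the $H$-norm. It therefore suffices to prove the scalar estimate $|\int q\,d(\mu-\nu)|\le C\norm{g}_\infty\varpi(d)$ for every $q$ in this class.

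\emph{Step 2 (Lipschitz regularization).} Use the Lipschitz envelope (inf-convolution)
\[
q_\delta(\theta)=\inf_{\eta\in I}\bigl(q(\eta)+L_\delta|\theta-\eta|\bigr),\qquad L_\delta=K\varpi(\delta)/\delta ,
\]
or alternatively piecewise-linear interpolation on a grid of mesh $\delta$. Either gives $\Lip(q_\delta)\le C\norm{g}_\infty\varpi(\delta)/\delta$, $\norm{q_\delta}_\infty\le\norm{q}_\infty$, and $\norm{q-q_\delta}_\infty\le C\norm{g}_\infty\varpi(\delta)$. For piecewise-linear interpolation this follows from the modulus together with monotonicity of $\varpi$, and with $\varpi(\delta)/\delta$ decreasing the slope bound holds on each cell. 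Then:
\begin{itemize}
\item the remainder satisfies $|\int(q-q_\delta)\,d(\mu-\nu)|\le 2M\norm{q-q_\delta}_\infty$;
\item normalizing $q_\delta$ by $\norm{q_\delta}_\infty+\Lip(q_\delta)$ in the definition \eqref{eq:BL} gives
\[
\Bigl|\int q_\delta\,d(\mu-\nu)\Bigr|\le C\norm{g}_\infty\bigl(1+\varpi(\delta)/\delta\bigr)\,d .
\]
\end{itemize}

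\emph{Step 3 (optimization), the main point.} A naive choice loses a logarithm. With $\delta=d$, the total is bounded by
\[
C\norm{g}_\infty\bigl(\varpi(d)+d+\varpi(d)\bigr),
\]
since $(\varpi(d)/d)\,d=\varpi(d)$. So the choice $\delta=d$ already yields exactly $\varpi(d)$ and no extra loss occurs; the concern is only apparent. The care required is in checking that $\varpi(\delta)/\delta=\log(e+\delta^{-1})$ is indeed the Lipschitz constant of the interpolant on a mesh of size $\delta$ (which must not exceed $r$; for large $d$ use the crude bound), and that all constants depend only on $C_A$, $\lambda_1$, $r$, and $M$. None of them depends on $t$ or $T$, because Lemma~\ref{lem:averaging} is uniform in $t$ and $T$. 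Taking the supremum over $t\in[0,T]$ then gives \eqref{eq:integratedkernel} with a $T$-independent constant.
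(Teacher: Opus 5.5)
Your argument is correct and is essentially the paper's proof. You regularize at scale $\delta=d$, apply bounded-Lipschitz duality (via unit vectors in $H$) to the regularized function, bound the remainder by $2M$ times the uniform regularization error from the modulus in Lemma~\ref{lem:averaging}, and absorb $d\geq d_0$ into the constant. The only differences are that you scalarize before rather than after interpolating, and that you offer inf-convolution as an alternative; that variant is fully rigorous, since $\varpi(s)/s$ is decreasing. If you use interpolation instead, take $\lceil r/d\rceil$ equal cells of length in $[d/2,d]$ as the paper does. The monotonicity of $\varpi(\ell)/\ell$ bounds the slope only on cells not much shorter than $d$, so a leftover tiny cell would break your slope estimate as stated.
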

\begin{proof}
Write $d=\dBL(\mu,\nu)$. The case $d=0$ is immediate. First suppose
$0<d\leq d_0:=\min\{1,r\}$. Divide $I$ into $N=\lceil r/d\rceil$ equal intervals, of length $\ell\in[d/2,d]$, and let $G^\ell$ be the piecewise-linear interpolant of $G=G_{t,g}$. Lemma~\ref{lem:averaging} gives
\[
 \norm{G-G^\ell}_\infty\leq C_A\norm{g}_\infty\varpi(d),
 \qquad
 \norm{G^\ell}_\infty\leq\lambda_1^{-1}\norm{g}_\infty,
\]
and
\[
 \Lip_H(G^\ell)
 \leq C_A\norm{g}_\infty\frac{\varpi(\ell)}{\ell}
 \leq C_A\norm{g}_\infty\log(e+2/d).
\]
By testing against unit vectors in $H$, scalar bounded-Lipschitz duality implies
\[
 \left\|\int_I G^\ell\,d(\mu-\nu)\right\|_H
 \leq d\left(\norm{G^\ell}_\infty+\Lip_H(G^\ell)\right).
\]
The interpolation remainder is bounded by $2M\norm{G-G^\ell}_\infty$. Since
$d\log(e+2/d)\leq C\varpi(d)$, this proves the estimate for $d\leq d_0$. For $d>d_0$, use
$\norm{\int_I G\,d(\mu-\nu)}_H\leq2M\lambda_1^{-1}\norm{g}_\infty$
and $\varpi(d)\geq\varpi(d_0)>0$.
\end{proof}

\begin{theorem}[Quantitative weak-star stability]\label{thm:BLcontinuous}
Let Assumption~\ref{ass:main} hold. For every $T>0$, bounded $D\subset X$, and $M\geq0$, there is $C_{T,D,M}$ such that
\begin{equation}\label{eq:BLrate}
 \sup_{0\leq t\leq T}
 \norm{S_\mu(t)\varphi-S_\nu(t)\psi}_X
 \leq C_{T,D,M}
 \left(\norm{\varphi-\psi}_X+\varpi(\dBL(\mu,\nu))\right)
\end{equation}
whenever $\varphi,\psi\in D$ and $\norm{\mu}_{\mathrm{TV}},\norm{\nu}_{\mathrm{TV}}\leq M$.
No temporal modulus of continuity is prescribed for histories in $D$.
\end{theorem}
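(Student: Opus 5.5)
We will compare $u=u_\mu^\varphi$ with $v=u_\nu^\psi$ through a mild (Duhamel) formulation, so that the residual appears only after integration against the heat semigroup. Lemma~\ref{lem:integratedkernel} can then replace the bounded-Lipschitz test that a raw residual would require. First, by the triangle inequality and Corollary~\ref{cor:initialdata} (applied with the fixed kernel $\mu$), it suffices to treat $\varphi=\psi$. Indeed,
$\norm{S_\mu(t)\varphi-S_\nu(t)\psi}_X\leq\norm{S_\mu(t)\varphi-S_\mu(t)\psi}_X+\norm{S_\mu(t)\psi-S_\nu(t)\psi}_X$,
and the first term is bounded by $C_{T,D,M}\norm{\varphi-\psi}_X$. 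By Theorem~\ref{thm:wellposed}, both trajectories are bounded in $H$ by a common $R_T$, uniformly over $D$ and over $\norm{\cdot}_{\mathrm{TV}}\leq M$. Set $g(s)=B(v(s))$ on $[-r,T]$, so that $\norm{g}_\infty\leq b_0+b_1R_T$.

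Let $w=u-v$ with $w=0$ on $I$. The difference solves
$\partial_tw+Aw=F(t)+\rho_{\mu,\nu}(t)$ in $L^2(0,T;V')$, where
$F=-(f(u)-f(v))+\calD_\mu u-\calD_\mu v$
and $\rho_{\mu,\nu}(t)=\int_I g(t+\theta)\,d(\mu-\nu)(\theta)$.
Since $w(0)=0$, uniqueness of weak solutions of the linear heat equation with $L^2(0,T;H)$ right-hand side gives the Duhamel formula
$w(t)=\int_0^tE(t-s)F(s)\,ds+\int_0^tE(t-s)\rho_{\mu,\nu}(s)\,ds$.
The second term equals $\int_I G_{t,g}(\theta)\,d(\mu-\nu)(\theta)$ by Fubini, since the integrand $(s,\theta)\mapsto E(t-s)g(s+\theta)$ is continuous and bounded in $H$ and $|\mu-\nu|$ is finite. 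This gives
\[
 \norm{w(t)}_H\leq\int_0^t\norm{F(s)}_H\,ds+C_{A,r,M}(b_0+b_1R_T)\varpi(\dBL(\mu,\nu)),
\]
where we use $\norm{E}\leq1$ and Lemma~\ref{lem:integratedkernel}. By the bounded-ball Lipschitz bound on $f$ and by \eqref{eq:delaylip},
$\norm{F(s)}_H\leq(L_f(R_T)+ML_B(R_T))\sup_{\sigma\in[s-r,s]}\norm{w(\sigma)}_H$.

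Lemma~\ref{lem:delaygronwall}, applied with $y=\norm{w}_H$ and zero history, then yields
$\sup_{[-r,T]}\norm{w}_H\leq C_{T,D,M}\,\varpi(\dBL(\mu,\nu))$,
which is \eqref{eq:BLrate}. No temporal modulus of $\psi$ is used, because Lemma~\ref{lem:averaging} requires only $\norm{g}_\infty$.

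The main obstacle is justifying the mild formula for weak solutions whose right-hand side involves $f(u)$ and $h\in V'$. However, $h$ cancels in the difference, and $F+\rho_{\mu,\nu}\in L^2(0,T;H)$. The identity therefore follows from uniqueness for the linear problem: the Duhamel integral is itself a weak solution in the class $L^2(0,T;V)\cap H^1(0,T;V')$. A second point to check is the Fubini step, in particular measurability of $\theta\mapsto G_{t,g}(\theta)$. This holds because $G_{t,g}$ is continuous by Lemma~\ref{lem:averaging}.
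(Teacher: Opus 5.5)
Your proposal is correct and follows the paper's proof. It uses the same Duhamel formula for the difference, the same Fubini identification of the heat-propagated residual with $\int_I G_{t,B(v)}\,d(\mu-\nu)$, Lemma~\ref{lem:integratedkernel}, and a delay Gronwall argument. The one difference is cosmetic: you first reduce to $\varphi=\psi$ using Corollary~\ref{cor:initialdata}, whereas the paper keeps the term $E(t)w(0)$ and the nonzero history difference directly in its Gronwall inequality for $Z(t)$.
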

\begin{proof}
Let $u=u_\mu^\varphi$, $v=u_\nu^\psi$, $w=u-v$, and let $R_T$ be a common $H$-bound. Although the common forcing $h$ may lie only in $V'$, it cancels from the difference equation, whose remaining inhomogeneity is continuous with values in $H$. Hence its variation-of-constants formula is
\begin{align}\label{eq:milddifference}
 w(t)={}&E(t)w(0)
 -\int_0^t E(t-s)[f(u(s))-f(v(s))]\,ds \notag\\
 &+\int_0^t E(t-s)
       [\calD_\mu u(s)-\calD_\mu v(s)]\,ds
 +R_{\mu,\nu}(t),
\end{align}
where
\[
 R_{\mu,\nu}(t)
 =\int_0^t E(t-s)\int_I B(v(s+\theta))\,d(\mu-\nu)(\theta)\,ds.
\]
Fubini identifies this term with the left-hand side of
\eqref{eq:integratedkernel}, taking $g=B(v)$, so
\begin{equation}\label{eq:Rmild}
 \sup_{t\leq T}\norm{R_{\mu,\nu}(t)}_H
 \leq C_{A,r,M}(b_0+b_1R_T)\varpi(\dBL(\mu,\nu)).
\end{equation}

Set $Z(t)=\sup_{s\in[-r,t]}\norm{w(s)}_H$ and
$L_T=L_f(R_T)+M L_B(R_T)$.
The contraction property of $E$, the local Lipschitz estimates, and \eqref{eq:milddifference} give
\[
 Z(t)\leq\norm{\varphi-\psi}_X
 +L_T\int_0^tZ(s)\,ds
 +C_{A,r,M}(b_0+b_1R_T)\varpi(\dBL(\mu,\nu)).
\]
Gronwall proves \eqref{eq:BLrate}, since the history difference at every $t\leq T$ is bounded by $Z(T)$.
\end{proof}

\begin{remark}\label{rem:rateinterpretation}
The estimate concerns the solution histories after the perturbation has been propagated by the equation. It does not assert a uniform bounded-Lipschitz estimate for the instantaneous map
$\varphi\mapsto\int_I B(\varphi(\theta))\,d\mu(\theta)$
on bounded subsets of $X$. Lemma~\ref{lem:averaging} supplies the temporal averaging needed for the solution estimate. The argument gives an upper bound; no optimality claim is made for the logarithmic factor.
\end{remark}

\begin{corollary}[Weak-star convergence and energy convergence]\label{thm:weakstarcontinuous}
Suppose $\mu_n\weakstar\mu$ in $\M(I)$. For every bounded $D\subset X$ and $T>0$,
\begin{equation}\label{eq:weakstarbounded}
 \sup_{\varphi\in D}\sup_{0\leq t\leq T}
 \norm{S_{\mu_n}(t)\varphi-S_\mu(t)\varphi}_X
 \longrightarrow0.
\end{equation}
If also $\varphi_n\to\varphi$ in $X$, then the corresponding trajectories satisfy
\begin{equation}\label{eq:weakstarenergy}
 \norm{u_{\mu_n}^{\varphi_n}-u_\mu^\varphi}_{\mathcal E_T}
 \longrightarrow0.
\end{equation}
\end{corollary}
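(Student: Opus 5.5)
Both claims follow quickly from results already in place; the only work is converting weak-star convergence into something quantitative. By the uniform boundedness principle (as in the proof of Lemma~\ref{lem:weakstaruniform}), $M:=\sup_n\norm{\mu_n}_{\mathrm{TV}}\vee\norm{\mu}_{\mathrm{TV}}<\infty$. Lemma~\ref{lem:BLweakstar} then gives $d_n:=\dBL(\mu_n,\mu)\to0$, and hence $\varpi(d_n)\to0$ since $\varpi$ is increasing and continuous at zero.

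For \eqref{eq:weakstarbounded}, apply Theorem~\ref{thm:BLcontinuous} with $\psi=\varphi\in D$, the bounded set $D$, the time $T$, and the common bound $M$. This gives
\[
 \sup_{\varphi\in D}\sup_{0\leq t\leq T}\norm{S_{\mu_n}(t)\varphi-S_\mu(t)\varphi}_X\leq C_{T,D,M}\,\varpi(d_n)\to0 .
\]
The uniformity over $D$ comes for free from Theorem~\ref{thm:BLcontinuous}, because no temporal modulus is required of the histories. This removes what would otherwise be the main obstacle, namely the lack of equicontinuity of $D$.

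For \eqref{eq:weakstarenergy}, I would use Lemma~\ref{lem:residual} with the roles arranged so that the residual is built on the fixed limit trajectory. Concretely, take $u_{\mu_n}^{\varphi_n}$ as the first solution and $u_\mu^\varphi$ as the reference, with measures $\mu_n$ and $\mu$. The set $D=\{\varphi_n\}\cup\{\varphi\}$ is bounded. The lemma then yields
\[
 \norm{u_{\mu_n}^{\varphi_n}-u_\mu^\varphi}_{\mathcal E_T}\leq C_{T,D,M}\bigl(\norm{\varphi_n-\varphi}_X+\norm{\rho_n}_{L^2(0,T;H)}\bigr),
\]
where $\rho_n(t)=\int_I B(u_\mu^\varphi(t+\theta))\,d(\mu_n-\mu)(\theta)$. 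Since $u_\mu^\varphi\in C([-r,T];H)$ is a single fixed trajectory, \eqref{eq:fixedtrajectoryweakstar} gives $\sup_{t\leq T}\norm{\rho_n(t)}_H\to0$. Hence $\norm{\rho_n}_{L^2(0,T;H)}\leq T^{1/2}\sup_t\norm{\rho_n(t)}_H\to0$, and the first term vanishes by hypothesis.

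Two points need checking, but neither is a real obstacle. First, Lemma~\ref{lem:residual} is stated for a residual built on the second solution $u_\nu^\psi$, so the reference must be the fixed limit trajectory, exactly as arranged above. Second, the constants depend only on $T$, $D$ and $M$. Alternatively, the $X$-part of \eqref{eq:weakstarenergy} also follows from Theorem~\ref{thm:BLcontinuous}. The residual lemma is needed to upgrade the convergence to the $L^2(0,T;V)$ and $\partial_t$ components of the $\mathcal E_T$ norm.
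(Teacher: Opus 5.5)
Your proposal is correct and matches the paper's proof step for step. The paper also bounds the total variations by uniform boundedness and obtains \eqref{eq:weakstarbounded} from Lemma~\ref{lem:BLweakstar} and Theorem~\ref{thm:BLcontinuous}; for \eqref{eq:weakstarenergy} it applies Lemma~\ref{lem:residual} with the fixed reference solution $u_\mu^\varphi$, whose residual vanishes uniformly on $[0,T]$ by Lemma~\ref{lem:weakstaruniform}.
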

\begin{proof}
The total variations are uniformly bounded by the uniform boundedness principle. Lemma~\ref{lem:BLweakstar} and Theorem~\ref{thm:BLcontinuous} prove \eqref{eq:weakstarbounded}. For the stronger energy statement, use the fixed reference solution $u_\mu^\varphi$ in \eqref{eq:residual}. Its residual tends to zero uniformly on $[0,T]$ by Lemma~\ref{lem:weakstaruniform}. Lemma~\ref{lem:residual} now proves \eqref{eq:weakstarenergy}.
\end{proof}

\begin{corollary}[Concentration to a discrete delay]\label{cor:distributedtodiscrete}
Let $\tau\in[0,r]$ and suppose $K_n(\theta)\,d\theta\weakstar\alpha\delta_{-\tau}$ in $\M(I)$, with $K_n\in L^1(-r,0)$. The corresponding solutions converge as in \eqref{eq:weakstarbounded} and \eqref{eq:weakstarenergy}.

More quantitatively, let $\rho_\varepsilon\geq0$ be an $L^1(I)$ density with integral one and
\[
 \int_I|\theta+\tau|\rho_\varepsilon(\theta)\,d\theta
 \leq C_\rho\varepsilon.
\]
Set $K_\varepsilon=\alpha\rho_\varepsilon$ and $\mu_\varepsilon=K_\varepsilon(\theta)\,d\theta$. Then, uniformly for $\varphi$ in bounded subsets of $X$,
\begin{equation}\label{eq:concentrationrate}
 \sup_{0\leq t\leq T}
 \norm{S_{\mu_\varepsilon}(t)\varphi
       -S_{\alpha\delta_{-\tau}}(t)\varphi}_X
 \leq C_{T,D,\alpha,\rho}\,
       \varepsilon\log(e+\varepsilon^{-1})
\end{equation}
for sufficiently small $\varepsilon>0$.
\end{corollary}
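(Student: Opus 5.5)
The corollary has two parts, and the plan handles them separately.

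\emph{Qualitative part.} This follows directly from Corollary~\ref{thm:weakstarcontinuous} with $\mu_n=K_n(\theta)\,d\theta$ and limit measure $\mu=\alpha\delta_{-\tau}$. The point mass is a finite signed Borel measure on $I$ because $\tau\in[0,r]$. Weak-star convergence already gives uniformly bounded total variation through the uniform boundedness principle, so nothing beyond checking the hypotheses is needed.

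\emph{Quantitative part.} Here I apply Theorem~\ref{thm:BLcontinuous} with $\varphi=\psi$, $\mu=\mu_\varepsilon$ and $\nu=\alpha\delta_{-\tau}$. Since $\rho_\varepsilon\geq0$ has integral one, $\norm{\mu_\varepsilon}_{\mathrm{TV}}=|\alpha|=\norm{\alpha\delta_{-\tau}}_{\mathrm{TV}}$. We may therefore take $M=|\alpha|$, and the constant depends only on $T$, $D$ and $\alpha$.

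The remaining task is to bound $\dBL(\mu_\varepsilon,\alpha\delta_{-\tau})$. For any real test function $q$ with $\norm{q}_\infty+\Lip(q)\leq1$, the unit mass of $\rho_\varepsilon$ lets us write
\[
 \int_Iq\,d\mu_\varepsilon-\alpha q(-\tau)
 =\alpha\int_I\bigl(q(\theta)-q(-\tau)\bigr)\rho_\varepsilon(\theta)\,d\theta .
\]
Its absolute value is at most $|\alpha|\int_I|\theta+\tau|\rho_\varepsilon\,d\theta\leq|\alpha|C_\rho\varepsilon$. Hence $d_\varepsilon:=\dBL(\mu_\varepsilon,\alpha\delta_{-\tau})\leq|\alpha|C_\rho\varepsilon$.

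Because $\varpi$ is increasing, $\varpi(d_\varepsilon)\leq\varpi(c\varepsilon)$ with $c=|\alpha|C_\rho$. For the case $c>0$ and $\varepsilon$ small, I check directly that $c\varepsilon\log(e+(c\varepsilon)^{-1})\leq C_c\,\varepsilon\log(e+\varepsilon^{-1})$. This uses $\log(e+x/c)\leq\log(\max\{1,1/c\})+\log(e+x)$ together with $\log(e+x)\geq1$. The case $\alpha=0$ is trivial, since then both measures vanish. Inserting this bound into \eqref{eq:BLrate} gives \eqref{eq:concentrationrate}.

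The only mildly delicate step is this comparison of the modulus under rescaling of its argument. It is elementary, and in fact holds for all $\varepsilon>0$. The phrase ``sufficiently small'' is not needed beyond this.
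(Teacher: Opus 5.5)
Your proposal is correct and follows the paper's proof essentially step by step. The qualitative part comes from Corollary~\ref{thm:weakstarcontinuous}. The quantitative part combines the first-moment bound $\dBL(\mu_\varepsilon,\alpha\delta_{-\tau})\leq|\alpha|C_\rho\varepsilon$, the equal total variations $|\alpha|$, and Theorem~\ref{thm:BLcontinuous}. Your explicit check that $\varpi(c\varepsilon)\leq C_c\varpi(\varepsilon)$ for all $\varepsilon>0$ fills in a rescaling step the paper leaves implicit here (it records it only in Corollary~\ref{cor:quadrature}). It also correctly shows that the restriction to small $\varepsilon$ is not needed.
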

\begin{proof}
The qualitative conclusion follows from Corollary~\ref{thm:weakstarcontinuous}; weak-star convergence already implies a uniform bound on $\norm{K_n}_{L^1}$. For every scalar test function in the unit bounded-Lipschitz ball,
\[
 \left|\int_I q(\theta)K_\varepsilon(\theta)\,d\theta
             -\alpha q(-\tau)\right|
 \leq|\alpha|\int_I|\theta+\tau|\rho_\varepsilon(\theta)\,d\theta.
\]
Thus $\dBL(\mu_\varepsilon,\alpha\delta_{-\tau})
\leq|\alpha|C_\rho\varepsilon$, and both total variations equal $|\alpha|$. Apply Theorem~\ref{thm:BLcontinuous}. A nonnegative smooth approximate identity centered at $-\tau$ has the required first-moment bound; at $\tau=0$ or $\tau=r$, choose a one-sided approximate identity.
\end{proof}

\begin{corollary}[Atomic quadrature of signed kernels]\label{cor:quadrature}
Let $\{I_j\}_{j=1}^N$ be a Borel partition of $I$ with $\operatorname{diam}(I_j)\leq\Delta$, and choose $\theta_j\in I_j$. Define
\[
 Q_\Delta\mu=\sum_{j=1}^N\mu(I_j)\delta_{\theta_j}.
\]
Then
\begin{equation}\label{eq:quadraturemeasure}
 \norm{Q_\Delta\mu}_{\mathrm{TV}}\leq\norm{\mu}_{\mathrm{TV}},
 \qquad
 \dBL(Q_\Delta\mu,\mu)\leq\Delta\norm{\mu}_{\mathrm{TV}}.
\end{equation}
For every $T>0$, bounded $D\subset X$, and $M\geq0$,
\begin{equation}\label{eq:quadraturerate}
 \sup_{\norm{\mu}_{\mathrm{TV}}\leq M}
 \sup_{\varphi\in D}\sup_{0\leq t\leq T}
 \norm{S_{Q_\Delta\mu}(t)\varphi-S_\mu(t)\varphi}_X
 \leq C_{T,D,M}\Delta\log(e+\Delta^{-1})
\end{equation}
for $0<\Delta\leq1$.
\end{corollary}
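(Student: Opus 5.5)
The plan is to establish the two measure estimates in \eqref{eq:quadraturemeasure} directly, and then obtain \eqref{eq:quadraturerate} from Theorem~\ref{thm:BLcontinuous} with equal histories $\varphi=\psi$.

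For the total-variation bound, the triangle inequality gives
\[
 \norm{Q_\Delta\mu}_{\mathrm{TV}}=\sum_j|\mu(I_j)|\leq\sum_j|\mu|(I_j)=\norm{\mu}_{\mathrm{TV}},
\]
because the $I_j$ form a Borel partition of $I$.

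For the bounded-Lipschitz bound, fix a real test function $q$ with $\norm{q}_\infty+\Lip(q)\leq1$. Split the integral over the partition:
\[
 \int_Iq\,d(Q_\Delta\mu-\mu)=\sum_j\int_{I_j}\bigl(q(\theta_j)-q(\theta)\bigr)\,d\mu(\theta).
\]
On $I_j$ we have $|q(\theta_j)-q(\theta)|\leq\Lip(q)\operatorname{diam}(I_j)\leq\Delta$. Hence the modulus of the sum is at most $\Delta\sum_j|\mu|(I_j)=\Delta\norm{\mu}_{\mathrm{TV}}$. Taking the supremum over $q$ gives the second inequality.

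For the dynamical rate, take $\norm{\mu}_{\mathrm{TV}}\leq M$. The first estimate shows $\norm{Q_\Delta\mu}_{\mathrm{TV}}\leq M$, so both measures lie in the class covered by Theorem~\ref{thm:BLcontinuous} with the same $M$. Applying \eqref{eq:BLrate} with $\varphi=\psi\in D$ yields
\[
 \sup_{0\leq t\leq T}\norm{S_{Q_\Delta\mu}(t)\varphi-S_\mu(t)\varphi}_X\leq C_{T,D,M}\,\varpi\bigl(\dBL(Q_\Delta\mu,\mu)\bigr).
\]
The constant is uniform in $\mu$ and $\varphi$ because it depends only on $T$, $D$ and $M$.

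It remains to bound $\varpi(M\Delta)$ by a multiple of $\Delta\log(e+\Delta^{-1})$, since $\varpi$ is increasing and $\dBL(Q_\Delta\mu,\mu)\leq M\Delta$. This is the only point that needs care, and it is elementary. If $M=0$ the left side vanishes. Otherwise put $k=\max\{M,1\}$, so that $M\Delta\leq k\Delta$ and it suffices to estimate $\varpi(k\Delta)$. We have
\[
 \varpi(k\Delta)=k\Delta\log\bigl(e+(k\Delta)^{-1}\bigr)\leq k\Delta\log(e+\Delta^{-1}),
\]
because $k\geq1$. The factor $k$ is absorbed into the constant, which proves \eqref{eq:quadraturerate} for $0<\Delta\leq1$.
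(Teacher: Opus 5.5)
Your proposal is correct and follows the paper's proof. You split the integral over the partition to get both measure bounds, and then apply Theorem~\ref{thm:BLcontinuous} with $\varphi=\psi$. The paper's proof simply cites the scaling bound $\varpi(M\Delta)\leq C_M\varpi(\Delta)$, and you verify it explicitly with $C_M=\max\{M,1\}$.
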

\begin{proof}
The variation bound follows from
$\sum_j|\mu(I_j)|\leq|\mu|(I)$. For a test function with $\Lip(q)\leq1$,
\[
 \left|\int_Iq\,d(Q_\Delta\mu-\mu)\right|
 =\left|\sum_j\int_{I_j}[q(\theta_j)-q(\theta)]\,d\mu(\theta)\right|
 \leq\Delta\norm{\mu}_{\mathrm{TV}}.
\]
Theorem~\ref{thm:BLcontinuous} gives \eqref{eq:quadraturerate}, using
$\varpi(M\Delta)\leq C_M\varpi(\Delta)$.
\end{proof}

\section{Uniform dissipativity and compact global attractors}\label{sec:dissipativity}

Fix $M_0\geq0$ and set
\[
 \Kclass_{M_0}=\{\mu\in\M(I):\norm{\mu}_{\mathrm{TV}}\leq M_0\}.
\]

\begin{assumption}[Damping dominance]\label{ass:dissipative}
In addition to Assumption~\ref{ass:main}, assume
\begin{equation}\label{eq:dissipativecondition}
 \lambda_1+a_f>M_0b_1.
\end{equation}
\end{assumption}

Only the growth slope of the feedback enters this condition. The intercept $b_0$ affects the absorbing radius. The forcing remains in $V'$.

\begin{lemma}[Halanay estimate]\label{lem:halanay}
Let $a>b\geq0$ and $c\geq0$. Suppose $y\in C([-r,\infty);[0,\infty))$ is locally absolutely continuous on $[0,\infty)$ and
\[
 y'(t)+ay(t)\leq b\sup_{s\in[t-r,t]}y(s)+c
 \quad\text{for almost every }t>0.
\]
Set $K=c/(a-b)$ and choose $\omega>0$ with
$\omega+be^{\omega r}<a$. Then
\begin{equation}\label{eq:halanay}
 y(t)\leq K+
 \left(\sup_{s\in[-r,0]}y(s)-K\right)_+e^{-\omega t},
 \qquad t\geq0.
\end{equation}
\end{lemma}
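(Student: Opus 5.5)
The approach is a comparison argument against an explicit supersolution. Set $z(t)=K+\delta_0e^{-\omega t}$ for $t\geq0$, with $\delta_0=(\sup_{[-r,0]}y-K)_+$. Extend $z$ to $[-r,0]$ by the same formula, so that $z(s)=K+\delta_0e^{-\omega s}\geq K+\delta_0\geq y(s)$ there.

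First we check that $z$ is a strict supersolution. For $t\geq0$, the supremum of $z$ over $[t-r,t]$ is at most $K+\delta_0e^{-\omega(t-r)}=K+\delta_0e^{\omega r}e^{-\omega t}$. Hence
\[
 z'+az-b\sup_{[t-r,t]}z-c\geq \delta_0e^{-\omega t}\bigl(a-\omega-be^{\omega r}\bigr)+aK-bK-c .
\]
The first bracket is positive by the choice of $\omega$, and $(a-b)K=c$ by definition of $K$. So the right side is $\geq0$, with strict inequality when $\delta_0>0$.

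To avoid equality issues, we run the comparison against $z_\varepsilon=z+\varepsilon e^{-\omega t}$ for $\varepsilon>0$. The same computation gives
\[
 z_\varepsilon'+az_\varepsilon-b\sup z_\varepsilon-c\geq(\delta_0+\varepsilon)e^{-\omega t}\bigl(a-\omega-be^{\omega r}\bigr)>0 .
\]
Moreover $y<z_\varepsilon$ on $[-r,0]$.

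Next comes the first-crossing argument, which is the main obstacle. Suppose $t^*=\inf\{t>0:y(t)\geq z_\varepsilon(t)\}<\infty$. Then $y(t^*)=z_\varepsilon(t^*)$ and $y\leq z_\varepsilon$ on $[-r,t^*]$. The difficulty is that $y$ is only absolutely continuous and the differential inequality holds only a.e., so we cannot evaluate derivatives at $t^*$.

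Instead we integrate. Put $e=y-z_\varepsilon$, so that $e<0$ on $[0,t^*)$. For a.e. $t\in(0,t^*)$ we have $\sup_{[t-r,t]}y\leq\sup_{[t-r,t]}z_\varepsilon$. Subtracting the two inequalities gives
\[
 e'(t)+ae(t)\leq b\Bigl(\sup_{[t-r,t]}y-\sup_{[t-r,t]}z_\varepsilon\Bigr)-\gamma(t)\leq-\gamma(t),
\]
where $\gamma(t)=(\delta_0+\varepsilon)e^{-\omega t}(a-\omega-be^{\omega r})>0$.

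Multiplying by $e^{at}$ and integrating over $[0,t^*]$ gives
\[
 e^{at^*}e(t^*)\leq e(0)-\int_0^{t^*}e^{as}\gamma(s)\,ds<0 ,
\]
using $e(0)<0$. This contradicts $e(t^*)=0$. Hence $y<z_\varepsilon$ for all $t\geq0$. Letting $\varepsilon\downarrow0$ gives \eqref{eq:halanay}.

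Finally, the existence of $\omega$: the map $\omega\mapsto\omega+be^{\omega r}$ is continuous and equals $b<a$ at $\omega=0$, so a small $\omega>0$ works. This is only needed to justify the hypothesis, since the statement already chooses $\omega$.
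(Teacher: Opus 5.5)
Your proof is correct and follows essentially the same route as the paper. You compare $y$ with the supersolution $K+Le^{-\omega t}$ extended to $[-r,0]$, perturb it so the initial inequality is strict, run an integrated first-crossing argument (multiplying by $e^{at}$ and integrating the a.e.\ inequality over $[0,t^*]$), and let the perturbation vanish; this supplies the details the paper compresses into ``the scalar comparison argument.''
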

\begin{proof}
Such an $\omega$ exists because $a>b$. With
$L=(\sup_{[-r,0]}y-K)_+$, the function
$q(t)=K+Le^{-\omega t}$, defined also for $-r\leq t<0$, dominates the prescribed history. It satisfies
\[
 q'(t)+aq(t)-b\sup_{s\in[t-r,t]}q(s)-c
 =Le^{-\omega t}(a-\omega-be^{\omega r})\geq0.
\]
The scalar comparison argument, first with a strictly positive perturbation of this supersolution and then by passage to the limit, gives $y\leq q$. This is the usual Halanay argument \cite{halanay1966}; it also covers $b=0$.
\end{proof}

\begin{proposition}[Common invariant absorbing ball]\label{prop:Habsorbing}
Under Assumptions~\ref{ass:main} and \ref{ass:dissipative}, there is $R_H>0$ such that
\[
 \mathcal B_H=\{\varphi\in X:\norm{\varphi}_X\leq R_H\}
\]
is positively invariant for every $\mu\in\Kclass_{M_0}$. For every bounded $D\subset X$, there is $T_H(D)$, independent of $\mu$, such that
$S_\mu(t)D\subset\mathcal B_H$ for all $t\geq T_H(D)$.
\end{proposition}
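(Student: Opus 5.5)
We derive a differential inequality for $y(t)=\norm{u(t)}_H^2$ and then apply Lemma~\ref{lem:halanay}. Since the weak solution lies in $L^2(0,T;V)\cap H^1(0,T;V')$, Lions' lemma makes $y$ absolutely continuous with
\[
 \tfrac12y'+\norm{u}_V^2+(f(u),u)_H=(\calD_\mu u,u)_H+\dual{h}{u}.
\]
We would use coercivity \eqref{eq:structural}, writing $\norm{u}_V^2=(1-\epsilon)\norm{u}_V^2+\epsilon\norm{u}_V^2\geq(1-\epsilon)\lambda_1\norm{u}_H^2+\epsilon\norm{u}_V^2$. The forcing is controlled by $\dual{h}{u}\leq\epsilon\norm{u}_V^2+\norm{h}_{V'}^2/(4\epsilon)$. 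For the delay we would use \eqref{eq:delaygrowth} with $\norm{\mu}_{\mathrm{TV}}\leq M_0$:
\[
 (\calD_\mu u,u)_H\leq M_0b_0\norm{u}_H+M_0b_1\sup_{[t-r,t]}\norm{u}_H\norm{u(t)}_H .
\]
Applying Young to the cross term gives $\tfrac{M_0b_1}{2}(y(t)+\sup y)$, and the $b_0$ term is bounded by $\delta y+C_\delta$. Collecting terms yields
\[
 y'+2\bigl((1-\epsilon)\lambda_1+a_f-\tfrac{M_0b_1}{2}-\delta\bigr)y\leq M_0b_1\sup_{[t-r,t]}y+c,
\]
so $a=2((1-\epsilon)\lambda_1+a_f)-M_0b_1-2\delta$ and $b=M_0b_1$. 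Then $a>b$ is equivalent, after taking $\epsilon,\delta$ small, to $\lambda_1+a_f>M_0b_1+\epsilon\lambda_1+\delta$, which \eqref{eq:dissipativecondition} allows. Every constant depends only on the structural data, $M_0$, and $\norm{h}_{V'}$, and not on $\mu$. If $a_f<0$ the splitting remains valid because $a_f$ enters only additively.

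Lemma~\ref{lem:halanay} then gives $y(t)\leq K+(\norm{\varphi}_X^2-K)_+e^{-\omega t}$ with $K=c/(a-b)$. We would set $R_H^2=2K$, or any value exceeding $K$. For positive invariance: if $\norm{\varphi}_X^2\leq R_H^2$, then $y(t)\leq\max\{K,\norm{\varphi}_X^2\}\leq R_H^2$ for all $t\geq0$. Since the history on $[-r,0]$ already lies in the ball, $\norm{S_\mu(t)\varphi}_X\leq R_H$. For absorption: if $\norm{\varphi}_X\leq\rho$ on $D$, then $y(s)\leq K+\rho^2e^{-\omega s}\leq R_H^2$ once $s\geq\omega^{-1}\log(\rho^2/K)$. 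Hence $S_\mu(t)D\subset\mathcal B_H$ for $t\geq T_H(D):=\omega^{-1}\log_+(\rho^2/K)+r$, where the extra $r$ ensures the whole history window has entered the ball. This bound is uniform in $\mu\in\Kclass_{M_0}$ because $a,b,c,\omega$ do not depend on $\mu$.

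The main obstacle is rigor rather than the estimate itself. First, the Halanay lemma needs the inequality almost everywhere with $y$ continuous on $[-r,\infty)$; this follows from the energy identity (Lions--Magenes) for weak solutions with $h\in V'$. Second, $K$ must be positive so that the logarithm is finite. If $c=0$ we would simply enlarge $c$ to $1$.
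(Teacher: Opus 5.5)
Your proposal is correct and follows essentially the same route as the paper. Both arguments use the energy identity, absorb the $V'$-forcing into part of $\|u\|_V^2$, bound the $b_0$ term by $\delta y+C_\delta$ and the cross term by $M_0b_1\sqrt{Yy}\le\frac{M_0b_1}{2}(Y+y)$, and then apply the Halanay lemma with $b=M_0b_1$, adding $r$ to the entry time so that the whole history window lies in the ball. The only cosmetic difference is that the paper takes $R_H^2=K+1$, which avoids your separate step of enlarging $c$ to make $K>0$.
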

\begin{proof}
Let $y(t)=\norm{u(t)}_H^2$, $Y(t)=\sup_{s\in[t-r,t]}y(s)$, and $k=M_0b_1$. The weak energy identity and coercivity give
\[
 \frac12y'(t)+\norm{u(t)}_V^2+a_fy(t)
 \leq c_0+M_0b_0\sqrt{y(t)}
          +k\sqrt{Y(t)y(t)}+\dual{h}{u(t)}.
\]
For $0<\eta<1$ and $\varepsilon>0$,
\[
 \begin{aligned}
 \dual{h}{u}&\leq\eta\norm{u}_V^2+\frac{\norm{h}_{V'}^2}{4\eta},\\
 M_0b_0\sqrt y&\leq\varepsilon y+\frac{(M_0b_0)^2}{4\varepsilon},
 \qquad
 k\sqrt{Yy}\leq\frac{k}{2}(Y+y).
 \end{aligned}
\]
Choose $\eta,\varepsilon$ so small that
$(1-\eta)\lambda_1+a_f-\varepsilon>k$, which is possible by
\eqref{eq:dissipativecondition}. It follows that
\begin{equation}\label{eq:dissipenergy}
 y'(t)+ay(t)\leq kY(t)+c,\qquad
 a=2[(1-\eta)\lambda_1+a_f-\varepsilon]-k>k,
\end{equation}
where $c$ is independent of $\mu$ and the initial history. Let $K=c/(a-k)$ and take $R_H^2=K+1$. Lemma~\ref{lem:halanay} proves positive invariance of the ball and a uniform eventual bound for $y$. Increasing the entry time by $r$ makes this bound valid on each entire history interval.
\end{proof}

\begin{proposition}[Uniform fractional smoothing with rough forcing]\label{prop:smoothing}
Under the same hypotheses, let $z\in V$ be the unique variational solution of $Az=h$. For every $0<s<1$ there is $R_s>0$ such that, for every bounded $D\subset X$,
\begin{equation}\label{eq:fractionalbound}
 \sup_{\mu\in\Kclass_{M_0}}\sup_{\varphi\in D}
 \norm{A^s(u_\mu^\varphi(t)-z)}_H\leq R_s
 \quad\text{for }t\geq T_H(D)+1.
\end{equation}
There are also constants $R_V,R_1$ independent of $D$ and $\mu$ such that, after this entry time,
\begin{equation}\label{eq:Vtimebound}
 \norm{u_\mu^\varphi(t)}_V\leq R_V,\qquad
 \norm{\partial_tu_\mu^\varphi(t)}_{V'}\leq R_1
\end{equation}
with the derivative estimate holding almost everywhere.
\end{proposition}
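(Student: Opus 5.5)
Write $v=u-z$, where $u=u_\mu^\varphi$. Since $Az=h$ in $V'$, the rough forcing cancels, and $v$ solves
\[
 \partial_tv+Av=F(t):=-f(u(t))+\calD_\mu u(t)
\]
in $V'$. The key point is that $F$ takes values in $H$. After the entry time $T_H(D)$, Proposition~\ref{prop:Habsorbing} gives $\norm{u(t)}_H\le R_H$, and the history segments lie in $\mathcal B_H$ as well. Hence \eqref{eq:fball} and \eqref{eq:delaygrowth} yield
\[
 \norm{F(t)}_H\le K_F:=\norm{f(0)}_H+L_f(R_H)R_H+M_0(b_0+b_1R_H)
 \quad\text{for }t\ge T_H(D).
\]
This constant is independent of $D$ and $\mu\in\Kclass_{M_0}$. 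Also $\norm{v(t)}_H\le R_H+\norm{z}_H$ there.

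For the fractional bound, fix $t_0\ge T_H(D)$. For $t\in[t_0,t_0+1]$ I would use the variation-of-constants formula
\[
 v(t)=E(t-t_0)v(t_0)+\int_{t_0}^tE(t-\sigma)F(\sigma)\,d\sigma,
\]
justified as in the proof of Theorem~\ref{thm:BLcontinuous} by uniqueness of weak solutions of the linear problem with $H$-valued forcing. The spectral bounds $\norm{A^sE(\tau)}\le C_s\tau^{-s}e^{-\lambda_1\tau/2}$, which follow like \eqref{eq:analyticbounds}, then give
\[
 \norm{A^sv(t)}_H\le C_s(t-t_0)^{-s}(R_H+\norm{z}_H)+C_sK_F\int_0^\infty\tau^{-s}e^{-\lambda_1\tau/2}\,d\tau .
\]
Taking $t=t_0+1$ and letting $t_0$ range over $[T_H(D),\infty)$ gives \eqref{eq:fractionalbound}, with $R_s$ independent of $D$ and $\mu$ because $s<1$ makes the integral finite.

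For $s=1/2$ this already gives a uniform bound on $\norm{v(t)}_V=\norm{A^{1/2}v(t)}_H$. Hence
\[
 \norm{u(t)}_V\le R_{1/2}+\norm{z}_V=:R_V .
\]
This uses $\norm{z}_V\le\norm{h}_{V'}$, so no separate $V$-energy estimate or uniform Gronwall argument is needed. The derivative bound then follows from the equation. For almost every $t$,
\[
 \partial_tu=-Av-F, \qquad\text{so}\qquad \norm{\partial_tu(t)}_{V'}\le\norm{v(t)}_V+\lambda_1^{-1/2}K_F ,
\]
using $\norm{Av}_{V'}=\norm{v}_V$ and \eqref{eq:poincare}. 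This gives $R_1$.

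The main obstacle is the rigorous identification of the weak solution with the mild formula on $[t_0,t]$ when the original forcing is only in $V'$. The subtraction of $z$ handles this: after it, the linear equation $\partial_tv+Av=F$ with $F\in L^\infty(t_0,t;H)$ has a unique weak solution, and the mild formula provides one. A second point to check is that $T_H(D)$ already covers whole history intervals, so that the delay term is bounded using $R_H$. The proof of Proposition~\ref{prop:Habsorbing} arranges this by adding $r$ to the entry time.
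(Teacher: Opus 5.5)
Your proposal is correct and follows essentially the same route as the paper: you subtract $z=A^{-1}h$, bound the $H$-valued forcing $-f(u)+\calD_\mu u$ uniformly after entry into $\mathcal B_H$, apply the unit-window variation-of-constants formula with the smoothing bound for $A^sE(\tau)$, and read off $R_V$ from $s=1/2$ and $R_1$ from the equation. One small slip: the equation gives $\partial_tu=-Av+F$, not $-Av-F$, but this does not affect the norm bound.
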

\begin{proof}
Existence of $z$ follows from the Lax--Milgram theorem, and
$\norm{z}_V\leq\norm{h}_{V'}$.
Put $w=u-z$. Then
\begin{equation}\label{eq:shifted}
 \partial_tw+Aw=F_\mu(t),\qquad
 F_\mu(t)=-f(u(t))+\calD_\mu u(t).
\end{equation}
For $t\geq T_H(D)$, Proposition~\ref{prop:Habsorbing} and
\eqref{eq:fball} give
\[
 \begin{aligned}
 \norm{w(t)}_H&\leq R_H+\norm{z}_H=:R_w,\\
 \norm{F_\mu(t)}_H&\leq
 \norm{f(0)}_H+L_f(R_H)R_H+M_0(b_0+b_1R_H)=:R_F.
 \end{aligned}
\]
Both bounds are uniform in $\mu$ and $D$ after entry. The linear variation-of-constants formula for \eqref{eq:shifted} gives
\[
 w(t)=E(1)w(t-1)+\int_0^1E(\sigma)F_\mu(t-\sigma)\,d\sigma.
\]
This identity follows from the scalar eigenfunction equations, or from the weak-to-mild equivalence for the linear parabolic equation. Analytic semigroup smoothing \cite{pazy1983} gives, for $0<s<1$,
\[
 \norm{A^sw(t)}_H
 \leq C_sR_w+C_sR_F\int_0^1\sigma^{-s}\,d\sigma
 =C_sR_w+\frac{C_sR_F}{1-s}.
\]
Taking $s=1/2$ yields $R_V=\norm{z}_V+R_{1/2}$. Finally,
\[
 \norm{\partial_tu(t)}_{V'}
 =\norm{-Aw(t)+F_\mu(t)}_{V'}
 \leq R_{1/2}+\lambda_1^{-1/2}R_F=:R_1,
\]
which includes the embedding constant from \eqref{eq:poincare}.
\end{proof}

\begin{proposition}[Common compact absorbing set]\label{prop:compactabsorbing}
There exists a compact set $\calK_*\subset\mathcal B_H\subset X$ such that, for every bounded $D\subset X$,
\[
 S_\mu(t)D\subset\calK_*
 \quad\text{for }t\geq T_H(D)+1+r,\quad
 \mu\in\Kclass_{M_0}.
\]
\end{proposition}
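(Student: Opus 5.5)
The plan is to build $\calK_*$ explicitly via Arzel\`a--Ascoli in $X=C(I;H)$, using the uniform bounds of Proposition~\ref{prop:smoothing}. Define
\[
 \calK_*=\Bigl\{\varphi\in X:\ \norm{\varphi}_X\leq R_H,\ \norm{\varphi(\theta)}_V\leq R_V\ \forall\theta\in I,\ \norm{\varphi(\theta)-\varphi(\eta)}_H\leq L|\theta-\eta|^{1/2}\Bigr\},
\]
with a constant $L$ independent of $D$ and $\mu$. Let $t\geq T_H(D)+1+r$ and $\varphi\in D$. The whole history window $[t-r,t]$ then lies in $[T_H(D)+1,\infty)$. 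Proposition~\ref{prop:Habsorbing} gives $\norm{u(s)}_H\leq R_H$ there, and \eqref{eq:Vtimebound} gives $\norm{u(s)}_V\leq R_V$ and $\norm{\partial_tu(s)}_{V'}\leq R_1$ for a.e.\ $s$ in the window. So the first two constraints hold for $S_\mu(t)\varphi=u_t$.

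For the equicontinuity constraint, I would interpolate between $V$ and $V'$. Since $u\in H^1_{\mathrm{loc}}(V')$, for $s_1<s_2$ in the window
\[
 \norm{u(s_2)-u(s_1)}_{V'}\leq R_1|s_2-s_1|.
\]
Then $\norm{v}_H^2=\dual{v}{v}\leq\norm{v}_V\norm{v}_{V'}$ gives
\[
 \norm{u(s_2)-u(s_1)}_H\leq(2R_VR_1)^{1/2}|s_2-s_1|^{1/2},
\]
so $L=(2R_VR_1)^{1/2}$ works, uniformly in $\mu\in\Kclass_{M_0}$ and $D$.

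Compactness of $\calK_*$ follows next. It is closed in $X$: the pointwise constraint $\norm{\varphi(\theta)}_V\leq R_V$ is preserved under $H$-convergence, by weak lower semicontinuity of the $V$-norm after extracting a weakly convergent subsequence in $V$. The Hölder condition and the sup bound are also closed. For relative compactness I apply the vector-valued Arzel\`a--Ascoli theorem. The family is equicontinuous, and for each $\theta$ the set $\{\varphi(\theta)\}$ lies in the $V$-ball of radius $R_V$, which is compact in $H$ by Rellich (compact resolvent of $A$). Hence $\calK_*$ is compact, and $\calK_*\subset\mathcal B_H$ by construction.

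The main point needing care is making sure the bounds hold on the entire window $[t-r,t]$, not just at the endpoint. This is why the entry time is shifted by $r$ beyond $T_H(D)+1$. A second point is that the $V$ bound in \eqref{eq:Vtimebound} holds for every time, not just almost every time. This follows because the estimate comes from the mild formula, which is valid pointwise. The derivative bound is only needed almost everywhere, since it enters through integration.
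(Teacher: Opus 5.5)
Your proposal is correct and follows the paper's proof essentially verbatim. It uses the same set $\calK_*$, defined by the $H$-bound $R_H$, the pointwise $V$-bound $R_V$, and the $\tfrac12$-H\"older modulus $(2R_VR_1)^{1/2}$ from $V$--$V'$ interpolation, with compactness from Arzel\`a--Ascoli and closedness via weak lower semicontinuity in $V$; your remarks that the shift by $r$ puts the whole window $[t-r,t]$ after $T_H(D)+1$, and that the $V$-bound holds pointwise via the mild formula, only make explicit what the paper leaves implicit.
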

\begin{proof}
On an interval where \eqref{eq:Vtimebound} holds, the pivot-space interpolation inequality gives
\begin{align}\label{eq:timeholder}
 \norm{u(t)-u(s)}_H^2
 &\leq\norm{u(t)-u(s)}_V\norm{u(t)-u(s)}_{V'}\notag\\
 &\leq2R_VR_1|t-s|.
\end{align}
Let $C_H=(2R_VR_1)^{1/2}$. Define $\calK_*$ as the set of $\varphi\in X$ satisfying
\[
 \norm{\varphi}_X\leq R_H,\qquad
 \sup_{\theta\in I}\norm{\varphi(\theta)}_V\leq R_V,
\]
and
\[
 \norm{\varphi(\theta)-\varphi(\eta)}_H
 \leq C_H|\theta-\eta|^{1/2},\qquad \theta,\eta\in I.
\]
The $V$-ball is compact in $H$, so Arzel\`a--Ascoli gives relative compactness in $X$. These conditions are closed under uniform $H$-convergence: for the $V$-bound, use weak compactness in $V$ and lower semicontinuity at each time. Thus $\calK_*$ is compact. Propositions~\ref{prop:Habsorbing} and \ref{prop:smoothing}, together with \eqref{eq:timeholder}, place every sufficiently late history in this set.
\end{proof}

\begin{theorem}[Compact global attractors]\label{thm:attractorexistence}
For every $\mu\in\Kclass_{M_0}$, the semiflow $S_\mu$ has a compact global attractor $\calA_\mu\subset X$. It is strictly invariant and attracts every bounded subset of $X$. Moreover,
\begin{equation}\label{eq:commonattractor}
 \bigcup_{\mu\in\Kclass_{M_0}}\calA_\mu\subset\calK_*,
\end{equation}
and for every $0<s<1$,
\begin{equation}\label{eq:attractorfractional}
 \sup_{\mu\in\Kclass_{M_0}}\sup_{\varphi\in\calA_\mu}
 \sup_{\theta\in I}
 \norm{A^s(\varphi(\theta)-z)}_H\leq R_s.
\end{equation}
\end{theorem}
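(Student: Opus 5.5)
The plan is to apply the classical existence theorem for dissipative semiflows with a compact absorbing set (e.g.\ \cite{temam1988,robinson2001,carvalholangarobinson2013}). By Corollary~\ref{cor:initialdata}, $S_\mu$ is a continuous semiflow on $X$. By Proposition~\ref{prop:compactabsorbing}, the compact set $\calK_*$ absorbs every bounded $D\subset X$ after the time $T_H(D)+1+r$, which does not depend on $\mu$. The theorem then yields a compact global attractor for each fixed $\mu\in\Kclass_{M_0}$, namely
\[
 \calA_\mu=\omega_\mu(\calK_*)=\bigcap_{s\geq0}\overline{\bigcup_{t\geq s}S_\mu(t)\calK_*}.
\]
This set is nonempty, compact, strictly invariant, and attracts every bounded set. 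Strict invariance uses continuity of each $S_\mu(t)$ together with compactness of $\calK_*$. Attraction of a bounded $D$ follows because $S_\mu(t)D\subset\calK_*$ for large $t$, so $S_\mu(t+\tau)D\subset S_\mu(\tau)\calK_*$.

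Next comes the inclusion \eqref{eq:commonattractor}. Since $\calA_\mu$ is bounded in $X$ and invariant, $\calA_\mu=S_\mu(t)\calA_\mu$ for every $t$. Take $D=\calA_\mu$ and $t\geq T_H(\calA_\mu)+1+r$. Proposition~\ref{prop:compactabsorbing} then gives $\calA_\mu\subset\calK_*$. Equivalently, $\omega_\mu(\calK_*)\subset\calK_*$, because the forward orbit of $\calK_*$ eventually stays in the closed set $\calK_*$.

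For \eqref{eq:attractorfractional}, let $\varphi\in\calA_\mu$ and $\theta\in I$. By invariance, $\varphi=S_\mu(t)\psi$ for some $\psi\in\calA_\mu$, with $t$ as large as desired, say $t\geq T_H(\calA_\mu)+1+r$. Then $\varphi(\theta)=u_\mu^\psi(t+\theta)$ with $t+\theta\geq T_H(\calA_\mu)+1$. Proposition~\ref{prop:smoothing}, applied with $D=\calA_\mu$, gives $\norm{A^s(\varphi(\theta)-z)}_H\leq R_s$. The constant $R_s$ does not depend on $D$ or on $\mu$, so the supremum over $\mu\in\Kclass_{M_0}$ is harmless.

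The main subtlety I expect is this use of backward extension. Strict invariance supplies a preimage at each time but not a complete trajectory directly. The single-step preimage $S_\mu(t)\psi=\varphi$ with a fixed large $t$ is all that is needed, so no complete orbit has to be constructed. I would also take care that the uniform entry time in Proposition~\ref{prop:smoothing} is measured from the time at which the history is prescribed. Here that is the time $0$ of $u^\psi_\mu$, so the bound is legitimately applied at time $t+\theta$.
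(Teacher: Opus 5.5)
Your proposal is correct and follows the paper's proof. It uses the standard attractor theorem from the continuous semiflow plus common compact absorption, obtains the inclusion in $\calK_*$ because the bounded invariant set $\calA_\mu$ is absorbed, and gets the fractional bound from a single preimage under $S_\mu(T)$ for large $T$. The only cosmetic difference is that you take the entry time for $D=\calA_\mu$, whereas the paper takes $D=\calK_*$, which makes $T$ uniform in $\mu$. This is immaterial, since $R_s$ is independent of both $D$ and $\mu$.
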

\begin{proof}
Corollary~\ref{cor:initialdata} supplies a continuous semiflow, and Proposition~\ref{prop:compactabsorbing} supplies compact absorption. The standard global-attractor theorem therefore applies; see \cite{hale1988,robinson2001}. Since $\calK_*$ absorbs the bounded set $\calA_\mu$, strict invariance gives
$\calA_\mu=S_\mu(T)\calA_\mu\subset\calK_*$ for large $T$.

For \eqref{eq:attractorfractional}, choose
$T\geq T_H(\calK_*)+1+r$. Every $\varphi\in\calA_\mu$ has a preimage
$\psi\in\calA_\mu\subset\calK_*$ under $S_\mu(T)$. Apply
\eqref{eq:fractionalbound} to the trajectory from $\psi$ at times $T+\theta$, $\theta\in I$.
\end{proof}

\begin{remark}[Consequences of separating growth and damping]
If $B$ is bounded and $\lambda_1+a_f>0$, the preceding results apply to every fixed class $\Kclass_{M_0}$, however large $M_0$ is. The radii and entry times can depend on $M_0$. Likewise, the superlinear abstract reaction in Example~\ref{ex:admissible} allows every such class, by choosing its coercivity coefficient sufficiently large. These statements do not require a uniform bound over all finite measures at once.
\end{remark}

\section{Upper semicontinuity in the state and energy spaces}\label{sec:usc}

For nonempty subsets $C,D$ of a normed space $Y$, write
\[
 \dist_Y(C,D)=\sup_{c\in C}\inf_{d\in D}\norm{c-d}_Y.
\]
The set $\calK_*$ below is the common compact absorbing set, and all hypotheses of Section~\ref{sec:dissipativity} remain in force.

\begin{theorem}[Attractor perturbation bound and upper semicontinuity]\label{thm:usc}
There are constants $C,L>0$, independent of $\mu,\nu\in\Kclass_{M_0}$, such that
\begin{equation}\label{eq:attractormodulus}
 \dist_X(\calA_\nu,\calA_\mu)
 \leq
 \inf_{T\geq0}
 \left\{Ce^{LT}\varpi(\dBL(\mu,\nu))
       +a_\mu(T)\right\},
\end{equation}
where
$a_\mu(T)=\dist_X(S_\mu(T)\calK_*,\calA_\mu)\to0$ as $T\to\infty$.
Consequently,
\begin{equation}\label{eq:usc}
 \mu_n\weakstar\mu,\quad \mu_n,\mu\in\Kclass_{M_0}
 \quad\Longrightarrow\quad
 \dist_X(\calA_{\mu_n},\calA_\mu)\longrightarrow0.
\end{equation}
\end{theorem}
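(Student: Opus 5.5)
The plan is the standard Hale--Lin--Raugel upper-semicontinuity mechanism, made quantitative with Theorem~\ref{thm:BLcontinuous}. First fix the constants. All attractors $\calA_\nu$, $\nu\in\Kclass_{M_0}$, lie in the common compact set $\calK_*\subset\mathcal B_H$ by \eqref{eq:commonattractor}, so $D=\calK_*$ is a bounded set of histories independent of $\nu$. By Theorem~\ref{thm:wellposed}, the trajectories starting in $\calK_*$ are bounded in $H$ on $[0,T]$ uniformly in the kernel. Positive invariance of $\mathcal B_H$ (Proposition~\ref{prop:Habsorbing}) actually gives the time-independent bound $R_H$ for every $t\geq0$.

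Next, rerun the proof of Theorem~\ref{thm:BLcontinuous} with $R_T$ replaced by $R_H$. The residual bound \eqref{eq:Rmild} has the $T$-independent constant $C_{A,r,M_0}(b_0+b_1R_H)$ by Lemma~\ref{lem:integratedkernel}. The Gronwall step then gives, for $\varphi=\psi\in\calK_*$,
\[
 \sup_{0\leq t\leq T}\norm{S_\nu(t)\varphi-S_\mu(t)\varphi}_X
 \leq C e^{LT}\varpi(\dBL(\mu,\nu)),
\]
with $L=L_f(R_H)+M_0L_B(R_H)$ and $C=C_{A,r,M_0}(b_0+b_1R_H)$. I would verify that the history norm at time $t$ is controlled by $Z(t)$, which it is because the initial difference vanishes. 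This explicit exponential dependence on $T$ is the step I expect needs the most care. The constants must be shown independent of $\mu,\nu$, which is why I use the invariant ball rather than the finite-time bounds $C_T$.

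To get \eqref{eq:attractormodulus}, take $\varphi\in\calA_\nu$ and $T\geq0$. Strict invariance gives $\psi\in\calA_\nu\subset\calK_*$ with $\varphi=S_\nu(T)\psi$. Then
\[
 \dist_X(\varphi,\calA_\mu)\leq\norm{S_\nu(T)\psi-S_\mu(T)\psi}_X+\dist_X(S_\mu(T)\psi,\calA_\mu).
\]
The first term is at most $Ce^{LT}\varpi(\dBL(\mu,\nu))$ and the second is at most $a_\mu(T)$. Taking the supremum over $\varphi$ and the infimum over $T$ gives \eqref{eq:attractormodulus}. The claim $a_\mu(T)\to0$ holds because $\calA_\mu$ attracts the bounded set $\calK_*$ (Theorem~\ref{thm:attractorexistence}).

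For \eqref{eq:usc}, Lemma~\ref{lem:BLweakstar} on $\Kclass_{M_0}$ gives $d_n=\dBL(\mu_n,\mu)\to0$, hence $\varpi(d_n)\to0$. Given $\varepsilon>0$, choose $T$ with $a_\mu(T)<\varepsilon/2$, and then $n$ large enough that $Ce^{LT}\varpi(d_n)<\varepsilon/2$. Here the attractor being approximated is fixed to $\calA_\mu$ and only $\nu=\mu_n$ varies, so $a_\mu$ does not depend on $n$.
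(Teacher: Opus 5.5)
Your proposal is correct and matches the paper's proof essentially step for step. Invariance of $\mathcal B_H$ makes the trajectory bound $R_H$, and hence the constant from Lemma~\ref{lem:integratedkernel}, independent of $T$; the Gronwall step of Theorem~\ref{thm:BLcontinuous} then gives $Ce^{LT}\varpi(\dBL(\mu,\nu))$ on $\calK_*$, and strict invariance of $\calA_\nu\subset\calK_*$ together with the triangle inequality and Lemma~\ref{lem:BLweakstar} finishes the argument. The only difference is cosmetic: the paper takes $L=1+L_f(R_H)+M_0L_B(R_H)$ so that the asserted strict positivity $L>0$ holds even in degenerate cases, and your choice works equally well because $e^{LT}$ is increasing in $L$.
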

\begin{proof}
Because $\calK_*\subset\mathcal B_H$ and $\mathcal B_H$ is invariant for all kernels, reference trajectories starting in $\calK_*$ have the same $H$-bound $R_H$ for all future times. Lemma~\ref{lem:integratedkernel} has a constant independent of the time horizon. The proof of Theorem~\ref{thm:BLcontinuous} therefore gives
\begin{equation}\label{eq:uniformexpfinite}
 \sup_{\varphi\in\calK_*}
 \norm{S_\nu(T)\varphi-S_\mu(T)\varphi}_X
 \leq Ce^{LT}\varpi(\dBL(\mu,\nu)),
 \qquad T\geq0,
\end{equation}
for example with $L=1+L_f(R_H)+M_0L_B(R_H)$.

Fix $T\geq0$. For every $a\in\calA_\nu$, invariance gives
$b\in\calA_\nu\subset\calK_*$ with $a=S_\nu(T)b$. Hence
\[
 \dist_X(a,\calA_\mu)
 \leq\norm{S_\nu(T)b-S_\mu(T)b}_X
       +\dist_X(S_\mu(T)b,\calA_\mu).
\]
Taking the supremum over $a$ and then the infimum over $T$ proves
\eqref{eq:attractormodulus}. Attraction of the bounded set $\calK_*$ gives $a_\mu(T)\to0$. For a weak-star convergent sequence, first choose $T$ to make this term small, then use Lemma~\ref{lem:BLweakstar} in the first term.
\end{proof}

\begin{corollary}[Conditional rate for global attractors]\label{cor:attractorrate}
Fix $\mu\in\Kclass_{M_0}$. If, in addition,
$a_\mu(T)\leq C_0e^{-\gamma T}$ for some $\gamma>0$, then
\begin{equation}\label{eq:conditionalrate}
 \dist_X(\calA_\nu,\calA_\mu)
 \leq C_\mu\,
 \varpi(\dBL(\mu,\nu))^{\gamma/(L+\gamma)}
\end{equation}
for $\dBL(\mu,\nu)$ sufficiently small.
\end{corollary}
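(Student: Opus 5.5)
The plan is to optimize the bound \eqref{eq:attractormodulus} of Theorem~\ref{thm:usc} over $T$ after inserting the assumed exponential attraction rate. Write $\delta=\varpi(\dBL(\mu,\nu))$. For every $T\geq0$, Theorem~\ref{thm:usc} together with the hypothesis on $a_\mu$ gives
\[
 \dist_X(\calA_\nu,\calA_\mu)\leq Ce^{LT}\delta+C_0e^{-\gamma T}.
\]
The constants $C,L$ are independent of $\mu,\nu\in\Kclass_{M_0}$. Only $C_0,\gamma$ depend on the fixed $\mu$, which is why the final constant is written $C_\mu$.

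Next choose $T$ to balance the two terms, setting $e^{(L+\gamma)T}=\delta^{-1}$, that is, $T=(L+\gamma)^{-1}\log(1/\delta)$. This is nonnegative exactly when $\delta\leq1$. Since $\varpi$ is increasing with $\varpi(d)\to0$ as $d\downarrow0$, that condition holds for $\dBL(\mu,\nu)$ sufficiently small, and this is the source of the smallness proviso in the statement. With this choice,
\[
 e^{LT}\delta=\delta^{1-L/(L+\gamma)}=\delta^{\gamma/(L+\gamma)},
 \qquad
 e^{-\gamma T}=\delta^{\gamma/(L+\gamma)}.
\]
Hence the distance is at most $(C+C_0)\,\delta^{\gamma/(L+\gamma)}$, which is \eqref{eq:conditionalrate} with $C_\mu=C+C_0$. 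The case $\delta=0$ is trivial: then $\dBL(\mu,\nu)=0$, so $\mu=\nu$ and the distance vanishes.

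There is no real obstacle. The only point to check is that the infimum in \eqref{eq:attractormodulus} is over all $T\geq0$, so the balancing value of $T$ is admissible. It is also worth noting that the exponent does not depend on $\nu$, because $L$ is uniform over the kernel class.
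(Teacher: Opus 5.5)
Your proposal is correct and follows the paper's proof: you substitute the exponential attraction bound into \eqref{eq:attractormodulus}, choose $T=(L+\gamma)^{-1}\log(\delta^{-1})$ so that both terms equal $\delta^{\gamma/(L+\gamma)}$, and treat $\delta=0$ separately because then $\mu=\nu$. Your remarks on admissibility ($T\geq0$ when $\delta\leq1$) and on which constants depend on $\mu$ are accurate, and they fill in details the paper leaves implicit.
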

\begin{proof}
For $0<q=\varpi(\dBL(\mu,\nu))<1$, choose
$T=(L+\gamma)^{-1}\log(q^{-1})$ in \eqref{eq:attractormodulus}. Both terms are then bounded by a constant times $q^{\gamma/(L+\gamma)}$. If $q=0$, the measures and attractors coincide.
\end{proof}

\begin{remark}\label{rem:attractorrate}
The additional attraction hypothesis in Corollary~\ref{cor:attractorrate} is not a consequence of the Halanay estimate. That estimate controls entry into a bounded region; convergence within that region to the global attractor can be slower. Existence of an exponential attractor, as in \cite{prazak2007}, also does not by itself give exponential attraction to the global attractor, which can be a proper subset. Similarly, the present arguments establish one-sided upper semicontinuity; lower semicontinuity requires additional dynamical hypotheses, as in the gradient-system results of \cite{haleraugel1989}.
\end{remark}

\begin{theorem}[Upper semicontinuity in $C(I;V)$]\label{thm:uscV}
Set $X_V=C(I;V)$. Every $\calA_\mu$ is a compact subset of $X_V$. For each $s\in(1/2,1)$ there is $C_s>0$ such that
\begin{equation}\label{eq:uscVestimate}
 \dist_{X_V}(\calA_\nu,\calA_\mu)
 \leq C_s\,
 \dist_X(\calA_\nu,\calA_\mu)^{\,1-\frac1{2s}},
 \qquad \mu,\nu\in\Kclass_{M_0}.
\end{equation}
In particular, weak-star convergence in $\Kclass_{M_0}$ implies
$\dist_{X_V}(\calA_{\mu_n},\calA_\mu)\to0$.
For each fixed $\mu$, the attractor also attracts every bounded $D\subset X$ in the $X_V$-semidistance, once the histories have entered $X_V$.
\end{theorem}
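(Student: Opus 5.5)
The plan is to interpolate between the uniform $X$-distance and the uniform fractional bound \eqref{eq:attractorfractional}. For $s\in(1/2,1)$ and any $v\in D(A^s)$, the moment (spectral) inequality gives
\[
 \norm{A^{1/2}v}_H\leq\norm{A^sv}_H^{\frac1{2s}}\norm{v}_H^{1-\frac1{2s}} .
\]
We apply this to differences $v=\varphi(\theta)-\psi(\theta)$ with $\varphi\in\calA_\nu$ and $\psi\in\calA_\mu$. Writing $\varphi(\theta)-\psi(\theta)=(\varphi(\theta)-z)-(\psi(\theta)-z)$, Theorem~\ref{thm:attractorexistence} gives $\norm{A^s(\varphi(\theta)-\psi(\theta))}_H\leq 2R_s$, uniformly in $\theta$, $\mu$, $\nu$. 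Since $\norm{\cdot}_V=\norm{A^{1/2}\cdot}_H$, we obtain
\[
 \norm{\varphi-\psi}_{X_V}\leq(2R_s)^{\frac1{2s}}\norm{\varphi-\psi}_X^{1-\frac1{2s}} .
\]
For fixed $\varphi\in\calA_\nu$, compactness of $\calA_\mu$ in $X$ provides a minimizer $\psi$ with $\norm{\varphi-\psi}_X=\dist_X(\varphi,\calA_\mu)$. Taking the supremum over $\varphi$ yields \eqref{eq:uscVestimate} with $C_s=(2R_s)^{1/(2s)}$. The weak-star consequence then follows from Theorem~\ref{thm:usc}.

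Compactness of $\calA_\mu$ in $X_V$ needs two things. First, $\calA_\mu\subset X_V$: each $\varphi\in\calA_\mu$ is a history $S_\mu(T)\psi$ with $T$ large, so it lies in $C(I;H)$ and is bounded in $D(A^s)$ pointwise in time. Continuity into $V$ comes from the same interpolation: $\norm{\varphi(\theta)-\varphi(\eta)}_V\leq C\norm{\varphi(\theta)-\varphi(\eta)}_H^{1-1/(2s)}$, which in addition gives a uniform Hölder modulus in $V$ on $\calA_\mu$ via \eqref{eq:timeholder}. Second, the identity map restricted to $\calA_\mu$ is continuous from $X$ to $X_V$ by the displayed interpolation bound. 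Hence the $X$-compact set $\calA_\mu$ is the continuous image of itself, and so it is compact in $X_V$.

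For the attraction statement, fix a bounded $D\subset X$ and take $t\geq T_H(D)+1+r$. Every history in $S_\mu(t)D$ satisfies the pointwise bound \eqref{eq:fractionalbound} on the whole interval $I$ and lies in $\calK_*$, so each is in $X_V$. For $\varphi\in S_\mu(t)D$, pick $\psi\in\calA_\mu$ nearest in $X$. The difference is bounded in $A^s$ by $2R_s$ (again through $z$), and the interpolation yields
\[
 \dist_{X_V}(S_\mu(t)D,\calA_\mu)\leq C_s\dist_X(S_\mu(t)D,\calA_\mu)^{1-\frac1{2s}}\to0 .
\]

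The main point needing care is that \eqref{eq:attractorfractional} controls $A^s(\varphi-z)$ and not $A^s\varphi$ itself, because $z$ is only in $V$. This is why all interpolation must be applied to differences, where $z$ cancels. A related subtlety is that $V$-continuity in $\theta$ of attractor histories is not immediate from the bound; I would derive it from $H$-continuity plus the uniform $D(A^s)$ bound on differences, rather than from parabolic regularity.
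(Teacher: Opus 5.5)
Your proposal is correct and follows essentially the same route as the paper. Both apply the spectral interpolation inequality to differences of attractor histories, so that $z$ cancels, and obtain the constant $C_s=(2R_s)^{1/(2s)}$. Compactness in $X_V$, $V$-continuity and attraction then follow from the same interpolation, using \eqref{eq:attractorfractional} and \eqref{eq:fractionalbound}. One small precision: values lie in $V$ because $\varphi(\theta)-z\in D(A^s)\subset V$ and $z\in V$, not because $\varphi(\theta)$ itself is bounded in $D(A^s)$.
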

\begin{proof}
The spectral interpolation inequality is
\begin{equation}\label{eq:spectralinterpolation}
 \norm{A^{1/2}v}_H
 \leq\norm{v}_H^{\,1-\frac1{2s}}
       \norm{A^sv}_H^{\,\frac1{2s}},\qquad v\in D(A^s).
\end{equation}
For $\varphi\in\calA_\nu$ and $\psi\in\calA_\mu$, the common shift $z$ cancels in their difference, and \eqref{eq:attractorfractional} gives
$\norm{A^s(\varphi(\theta)-\psi(\theta))}_H\leq2R_s$.
Thus
\begin{equation}\label{eq:pairinterpolation}
 \norm{\varphi-\psi}_{X_V}
 \leq(2R_s)^{1/(2s)}
       \norm{\varphi-\psi}_X^{\,1-\frac1{2s}}.
\end{equation}
Each history belongs to $X_V$: its values lie in $V$ because
$z\in V$ and $\varphi(\theta)-z\in D(A^s)$, and \eqref{eq:spectralinterpolation}, applied to two times, upgrades its $H$-continuity to $V$-continuity.
Compactness in $X$ and \eqref{eq:pairinterpolation} then imply compactness in $X_V$. Taking the infimum over $\psi$ and the supremum over $\varphi$ proves \eqref{eq:uscVestimate}.

For the last assertion, Proposition~\ref{prop:smoothing} gives the same uniform shifted fractional bound on all sufficiently late histories from $D$. Interpolation therefore upgrades their attraction in $X$ to attraction in $X_V$.
\end{proof}

\begin{corollary}[Attractors under concentration and quadrature]
Suppose $\mu_n,\mu\in\Kclass_{M_0}$ and $\mu_n\weakstar\mu$, with
$\mu_n$ either absolutely continuous approximations or atomic quadratures. Their global attractors converge upper semicontinuously to $\calA_\mu$ in $X_V$, and hence also in $X$. In particular this applies to the concentration limits and quadratures in Corollaries~\ref{cor:distributedtodiscrete} and \ref{cor:quadrature} whenever \eqref{eq:dissipativecondition} holds for a common TV bound.
\end{corollary}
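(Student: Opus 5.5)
This corollary should follow directly from Theorem~\ref{thm:uscV}, once we check that its hypotheses are met; no new estimate is needed. Fix $M_0$ so that \eqref{eq:dissipativecondition} holds. Then Assumption~\ref{ass:dissipative} is in force on $\Kclass_{M_0}$, and since all $\mu_n$ and $\mu$ lie in $\Kclass_{M_0}$, Theorem~\ref{thm:attractorexistence} gives compact attractors $\calA_{\mu_n}$ and $\calA_\mu$. All of them lie in the common compact set $\calK_*$ and obey the uniform fractional bound \eqref{eq:attractorfractional}. Whether the $\mu_n$ are absolutely continuous or atomic plays no role beyond membership in $\Kclass_{M_0}$. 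The estimates in Sections~\ref{sec:dissipativity}--\ref{sec:usc} depend on the measures only through the total-variation bound $M_0$ and the distance $\dBL$.

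The first step is convergence in $X$. Weak-star convergence within $\Kclass_{M_0}$ implies $\dBL(\mu_n,\mu)\to0$ by Lemma~\ref{lem:BLweakstar}, so $\varpi(\dBL(\mu_n,\mu))\to0$. Theorem~\ref{thm:usc}, applied with $\nu=\mu_n$, then gives $\dist_X(\calA_{\mu_n},\calA_\mu)\to0$. To be explicit, given $\epsilon>0$ we choose $T$ with $a_\mu(T)<\epsilon/2$, and then $n$ large enough that $Ce^{LT}\varpi(\dBL(\mu_n,\mu))<\epsilon/2$.

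The second step upgrades this to $X_V$. We apply \eqref{eq:uscVestimate} with some fixed $s\in(1/2,1)$, for instance $s=3/4$. This gives $\dist_{X_V}(\calA_{\mu_n},\calA_\mu)\le C_s\,\dist_X(\calA_{\mu_n},\calA_\mu)^{1/3}\to0$. Convergence in $X$ is also immediate from $\norm{\cdot}_H\le\lambda_1^{-1/2}\norm{\cdot}_V$ by \eqref{eq:poincare}; this justifies the phrase ``and hence also in $X$.''

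The last step covers the specific families. For concentration, Corollary~\ref{cor:distributedtodiscrete} has $\norm{\mu_\varepsilon}_{\mathrm{TV}}=|\alpha|$ and $\dBL(\mu_\varepsilon,\alpha\delta_{-\tau})\le|\alpha|C_\rho\varepsilon$, so we only need $|\alpha|\le M_0$. For a general weak-star convergent sequence $K_n\,d\theta$, the uniform boundedness principle bounds $\norm{K_n}_{L^1}$, and the corollary's hypothesis asks that this common bound, together with $\norm{\mu}_{\mathrm{TV}}$, lie below an $M_0$ satisfying \eqref{eq:dissipativecondition}. For quadrature, \eqref{eq:quadraturemeasure} gives $\norm{Q_\Delta\mu}_{\mathrm{TV}}\le\norm{\mu}_{\mathrm{TV}}\le M_0$, so $Q_\Delta\mu$ stays in $\Kclass_{M_0}$, and $\dBL(Q_\Delta\mu,\mu)\le\Delta M_0\to0$ as $\Delta\to0$. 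The only point that needs care, and so the main obstacle, is confirming that every approximating measure lies in one fixed class $\Kclass_{M_0}$ satisfying the damping condition. Once that holds, the absorbing set $\calK_*$ and the constants $C$, $L$, $R_s$ are uniform in $n$, and the argument above goes through.
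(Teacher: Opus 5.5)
Your proposal is correct and follows the paper's route. The paper's proof is just ``apply Theorems~\ref{thm:usc} and~\ref{thm:uscV}'', and your version fills in the same two steps: convergence in $X$ via Lemma~\ref{lem:BLweakstar} and the modulus \eqref{eq:attractormodulus}, then the upgrade to $X_V$ via \eqref{eq:uscVestimate}. Your additional check that the concentration and quadrature families stay in one damped class $\Kclass_{M_0}$ is exactly the bookkeeping the paper leaves implicit; it uses $\norm{\mu_\varepsilon}_{\mathrm{TV}}=|\alpha|$ and $\norm{Q_\Delta\mu}_{\mathrm{TV}}\leq\norm{\mu}_{\mathrm{TV}}$.
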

\begin{proof}
Apply Theorems~\ref{thm:usc} and \ref{thm:uscV}.
\end{proof}

\section{Sharpness and structured signed feedback}\label{sec:structured}

\begin{proposition}[Sharpness for the full coefficient class]\label{prop:sharpness}
If $\lambda_1+a_f\leq M_0b_1$, the constants in Assumption~\ref{ass:main} do not guarantee a bounded absorbing set or a compact global attractor for every $\mu\in\Kclass_{M_0}$.
\end{proposition}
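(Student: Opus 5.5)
The plan is to exhibit, for given constants with $\lambda_1+a_f\leq M_0b_1$, one explicit admissible pair $(f,B)$, forcing $h$, and kernel $\mu\in\Kclass_{M_0}$ that satisfy Assumption~\ref{ass:main} with exactly these constants, and whose semiflow has unbounded solutions; an unbounded solution rules out both a bounded absorbing set and a compact global attractor. Since the statement concerns what the structural constants guarantee, one counterexample per admissible choice of constants suffices.

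Take linear data: $f(u)=a_fu$, so $(f(u),u)_H=a_f\norm{u}_H^2$ and $c_0=0$ is allowed. Take $B(u)=b_1u$, so $\norm{B(u)}_H\leq b_1\norm{u}_H\leq b_0+b_1\norm{u}_H$ for every $b_0\geq0$. Take $h=0$ and the instantaneous kernel $\mu=M_0\delta_0$, which has total variation $M_0$. With these choices equation \eqref{eq:main} reduces to the undelayed linear problem
\[
 \partial_tu+Au+(a_f-M_0b_1)u=0 .
\]
Choose the history $\varphi(\theta)=ce_1$, constant in $\theta$, where $e_1$ is the first Dirichlet eigenfunction. By uniqueness (Theorem~\ref{thm:wellposed}) the solution is $u(t)=ce^{\kappa t}e_1$ with $\kappa=M_0b_1-a_f-\lambda_1\geq0$.

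If $\kappa>0$, the solution grows exponentially, so no bounded set absorbs the bounded set $\{ce_1\}$ and no attractor exists. If $\kappa=0$, the line $\{ce_1:c\in\R\}$ consists of equilibria. Its image in $X$ under constant histories is an unbounded invariant set. A compact global attractor would contain every bounded invariant set, hence every such equilibrium, which is impossible. Likewise, a bounded absorbing set would have to contain all of these equilibria, which is also impossible. The edge case $M_0b_1=0$ forces $\lambda_1+a_f\leq0$ and is covered by the same computation with $\mu=0$.

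The main obstacle is not the construction but what the statement means. I will make explicit that sharpness refers to the class of all data $(f,B,h,\mu)$ compatible with the given constants $(a_f,b_0,b_1,c_0,M_0)$. I will also check that the linear example stays within that class even when the prescribed $c_0$ or $b_0$ is positive; enlarging these constants only weakens the inequalities, so the example remains admissible. If one prefers a genuinely delayed kernel, $\mu=M_0\delta_{-r}$ also works. There the characteristic equation $\kappa+\lambda_1+a_f=M_0b_1e^{-\kappa r}$ has a root $\kappa\geq0$ by the intermediate value theorem, but the atom at zero keeps the argument shortest.
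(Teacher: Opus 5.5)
Your proposal is correct and follows the paper's proof essentially verbatim. It uses the same linear data $f(u)=a_fu$, $B(u)=b_1u$, $h=0$, $\mu=M_0\delta_0$, the same reduction to the first eigenmode, and the same split into the exponentially growing case and the critical case with an unbounded family of constant equilibria. Your extra remarks (admissibility for positive $c_0,b_0$, and the alternative kernel $M_0\delta_{-r}$ via the characteristic equation) are sound but not needed.
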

\begin{proof}
Take $f(u)=a_fu$, $B(u)=b_1u$, $h=0$, and $\mu=M_0\delta_0$. These maps satisfy the structural assumptions with $c_0=0$ and any prescribed $b_0\geq0$. On the first eigenmode $u(t)=q(t)e_1$, the equation becomes
\[
 q'(t)=(M_0b_1-\lambda_1-a_f)q(t).
\]
If the coefficient is positive, there are unbounded forward trajectories. If it is zero, every constant history $\theta\mapsto qe_1$, $q\in\R$, is an equilibrium. A global attractor would have to contain this unbounded family, since it must attract every singleton equilibrium. Neither case admits the asserted uniform dissipative conclusion.
\end{proof}

The sharpness statement concerns a guarantee based only on the displayed constants. Particular feedback structures can yield better criteria. Consider the signed Pyragas feedback used, for example, in \cite{casasmateostroltzsch2018}:
\begin{equation}\label{eq:pyragas}
 \partial_tu+Au+f(u)
 =\kappa\left(\int_Iu(t+\theta)\,d\nu(\theta)-u(t)\right)+h,
 \qquad \nu\in\mathcal P(I),
\end{equation}
where $\mathcal P(I)$ is the set of probability measures and $\kappa\geq0$ is fixed.

\begin{theorem}[Robustness for arbitrary fixed negative-feedback gain]\label{thm:pyragas}
Assume that $h$ and $f$ satisfy the forcing and reaction hypotheses of Assumption~\ref{ass:main}, and that $\lambda_1+a_f>0$. For every fixed $\kappa\geq0$, the family \eqref{eq:pyragas}, parameterized by $\nu\in\mathcal P(I)$, has common compact absorption in $X$. Its global attractors are upper semicontinuous in $X_V$ under weak-star convergence of $\nu$. Finite-time solution histories satisfy the quantitative estimate \eqref{eq:BLrate} with a constant depending on $\kappa$.
\end{theorem}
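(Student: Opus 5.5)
Rewrite \eqref{eq:pyragas} in the form \eqref{eq:main} by absorbing the instantaneous term into the reaction: set $\tilde f(u)=f(u)+\kappa u$, $B(u)=\kappa u$, and $\mu=\nu$. Then $\tilde f$ is Lipschitz on bounded balls with $L_{\tilde f}(R)=L_f(R)+\kappa$, and $(\tilde f(u),u)_H\geq(a_f+\kappa)\norm{u}_H^2-c_0$. The feedback satisfies \eqref{eq:structural} with $b_0=0$ and $b_1=\kappa$, and every $\nu\in\mathcal P(I)$ has $\norm{\nu}_{\mathrm{TV}}=1$. Assumption~\ref{ass:main} therefore holds with $\tilde a_f=a_f+\kappa$, and the family lies in $\Kclass_1$. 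This reformulation does not require an atom at zero in $\nu$; the $-\kappa u(t)$ term is now part of the reaction.

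The damping condition \eqref{eq:dissipativecondition} with $M_0=1$ reads $\lambda_1+a_f+\kappa>\kappa$, which is exactly the hypothesis $\lambda_1+a_f>0$, whatever the value of $\kappa$. All results of Sections~\ref{sec:wellposed}--\ref{sec:usc} then apply to the family on the subset $\mathcal P(I)\subset\Kclass_1$:
\begin{enumerate}[label=(\roman*)]
 \item Proposition~\ref{prop:compactabsorbing} gives a common compact absorbing set.
 \item Theorem~\ref{thm:attractorexistence} gives the attractors.
 \item Theorems~\ref{thm:usc} and \ref{thm:uscV} give upper semicontinuity in $X$ and in $X_V$.
 \item Theorem~\ref{thm:BLcontinuous} with $M=1$ gives \eqref{eq:BLrate}, with constants depending on $\kappa$ through $L_{\tilde f}$ and $L_B=\kappa$.
\end{enumerate}
Weak-star limits of probability measures are again probability measures, so the limit kernel stays in the family. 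The uniform boundedness step is automatic here.

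The main point to check is that no part of the earlier machinery tacitly uses the original $f$ rather than $\tilde f$. The relevant places are \eqref{eq:fball}, the smoothing estimate in Proposition~\ref{prop:smoothing}, and the Lipschitz constant $L$ in \eqref{eq:uniformexpfinite}. All of them only require bounded-ball Lipschitz constants, which $\tilde f$ has. The sharp Halanay step in Proposition~\ref{prop:Habsorbing} uses the choice of $\eta,\varepsilon$ with $(1-\eta)\lambda_1+a_f+\kappa-\varepsilon>\kappa$. This is possible precisely because $\lambda_1+a_f>0$, so the gain $\kappa$ cancels and the result holds for every fixed gain.
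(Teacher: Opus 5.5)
Your proposal is correct and is essentially the paper's proof: absorb $\kappa u(t)$ into the reaction, which raises the coercivity constant to $a_f+\kappa$, so the damping condition reduces to $\lambda_1+a_f>0$ for every gain, and then apply the theorems of Sections~\ref{sec:dissipativity}--\ref{sec:usc}. The only difference is normalization. You put the gain in the feedback ($B=\kappa I_H$, kernels $\nu\in\Kclass_1$), whereas the paper uses $\widetilde B=I_H$ and kernels $\kappa\nu\in\Kclass_\kappa$; your choice gives \eqref{eq:BLrate} directly in $\dBL(\nu,\widetilde\nu)$, with no need for the scaling bound $\varpi(\kappa d)\leq C_\kappa\varpi(d)$ or a separate $\kappa=0$ case.
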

\begin{proof}
Move $\kappa u(t)$ to the left and write
\[
 \partial_tu+Au+\widetilde f(u)
 =\int_I\widetilde B(u(t+\theta))\,d\widetilde\mu(\theta)+h,
\]
where
\[
 \widetilde f=f+\kappa I_H,\qquad
 \widetilde B=I_H,\qquad
 \widetilde\mu=\kappa\nu.
\]
The new reaction coefficient is $a_f+\kappa$, the feedback slope is one, and
$\norm{\widetilde\mu}_{\mathrm{TV}}=\kappa$. Consequently the damping condition is
\[
 \lambda_1+a_f+\kappa>\kappa,
\]
which follows from $\lambda_1+a_f>0$. All kernels $\kappa\nu$ belong to the same class with $M_0=\kappa$. Theorems~\ref{thm:attractorexistence}, \ref{thm:usc}, and \ref{thm:uscV} apply to this reformulation. Weak-star convergence of $\nu$ gives weak-star convergence of $\kappa\nu$. For the quantitative statement, use Theorem~\ref{thm:BLcontinuous} and
$\dBL(\kappa\nu,\kappa\widetilde\nu)=\kappa\dBL(\nu,\widetilde\nu)$,
together with $\varpi(\kappa d)\leq C_\kappa\varpi(d)$ when $\kappa>0$. The case $\kappa=0$ is independent of $\nu$.
\end{proof}

\begin{remark}
In the original representation, the signed kernel is $\kappa(\nu-\delta_0)$ and can have total variation $2\kappa$. Keeping its instantaneous negative part in the reaction exposes the extra damping. The constants above may depend on the fixed gain; no uniformity as $\kappa\to\infty$ is asserted. The theorem includes $\nu_n\weakstar\delta_0$, for which the entire feedback term vanishes in the limit.
\end{remark}

\section{Pointwise cubic reactions}\label{sec:cubic}

We now replace the abstract reaction hypothesis by a polynomial Nemytskii reaction. Throughout this section, $\Omega\subset\R^d$ is bounded and Lipschitz with $1\leq d\leq3$, $h\in V'$ is time-independent, and $B:H\to H$ satisfies the bounded-ball Lipschitz and affine-growth hypotheses of Assumption~\ref{ass:main}. Set
\begin{equation}\label{eq:cubicpolynomial}
 p(s)=a_3s^3+a_2s^2+a_1s+a_0,\qquad a_3>0,
 \qquad f(u)(x)=p(u(x)).
\end{equation}
The coefficients are fixed real numbers. We write $S_\mu^p$ for the resulting history semiflow. The delay lemmas use only the assumptions on $B$ and remain valid. No condition of the form \eqref{eq:dissipativecondition} is imposed in this section.

\subsection{Variational well-posedness and total-variation stability}

Write $Q_T=(0,T)\times\Omega$. The embeddings $V\hookrightarrow L^4(\Omega)$ and $V\hookrightarrow L^6(\Omega)$ will be used explicitly. The natural derivative space is
\[
 \mathcal Z_T=L^2(0,T;V')+L^{4/3}(Q_T)
 \ \hookrightarrow\ L^{4/3}(0,T;V'),
\]
where the sum is understood in the space of distributions.

\begin{definition}[Cubic weak solution]\label{def:cubicweak}
A solution of
\begin{equation}\label{eq:cubicequation}
 \partial_tu+Au+p(u)=\calD_\mu u+h,\qquad u|_I=\varphi,
\end{equation}
on $[0,T]$ satisfies
\[
 u\in C([-r,T];H)\cap L^2(0,T;V)\cap L^4(Q_T),
 \qquad \partial_tu\in\mathcal Z_T,
\]
and the equation holds in $V'$ for almost every time. In particular, the reaction term in its weak formulation is
$\int_\Omega p(u)v\,dx$ for $v\in V$. Initial histories need only belong to $X$; no $L^4$ condition is imposed on their negative-time values.
\end{definition}

\begin{lemma}[Cubic coercivity and difference inequalities]\label{lem:cubicalgebra}
There are positive constants $c_p,C_p,c_*,C_*$, depending only on $p$, such that
\begin{equation}\label{eq:cubiccoercivity}
 p(s)s\geq c_p|s|^4-C_p,\qquad
 p(s)s\geq k s^2-C_k\quad(s\in\R)
\end{equation}
for every prescribed $k\geq0$, with a corresponding finite $C_k\geq0$.
Let
\[
 \ell=\left(\frac{a_2^2}{3a_3}-a_1\right)_+.
\]
Then $p'(s)\geq-\ell$ and, for $a,b,\xi\in\R$,
\begin{align}
 (p(a)-p(b))(a-b)
 &\geq\frac{a_3}{4}|a-b|^4-\ell|a-b|^2,
 \label{eq:cubicmonotone}\\
 (p(a)-p(b))(a-b-\xi)
 &\geq c_*|a-b|^4-C_*|a-b|^2
       -C_*(1+a^2+b^2)|\xi|^2.
 \label{eq:cubicshiftineq}
\end{align}
\end{lemma}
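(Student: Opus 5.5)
The plan is to prove the three parts of Lemma~\ref{lem:cubicalgebra} in turn by elementary one-variable arguments.

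\emph{Coercivity.} Write $p(s)s=a_3s^4+a_2s^3+a_1s^2+a_0s$. Young's inequality absorbs each lower-order term into $\tfrac{a_3}{2}s^4$ plus a constant, for example
\[
 |a_2s^3|\leq\tfrac{a_3}{6}s^4+C,
\]
and similarly for the terms in $s^2$ and $s$. This gives the first bound in \eqref{eq:cubiccoercivity} with $c_p=a_3/2$. The second bound follows from the first, since $c_ps^4-ks^2$ is bounded below by $-k^2/(4c_p)$.

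\emph{Lower bound on $p'$.} Compute
\[
 p'(s)=3a_3s^2+2a_2s+a_1.
\]
Its minimum over $\R$ is attained at $s=-a_2/(3a_3)$ and equals $a_1-a_2^2/(3a_3)\geq-\ell$.

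\emph{Monotonicity inequality \eqref{eq:cubicmonotone}.} Use the exact factorization
\[
 p(a)-p(b)=(a-b)\bigl[a_3(a^2+ab+b^2)+a_2(a+b)+a_1\bigr].
\]
The quadratic form satisfies $a^2+ab+b^2=\tfrac34(a+b)^2+\tfrac14(a-b)^2$. Set $\sigma=a+b$ and $\delta=a-b$. Then
\[
 (p(a)-p(b))(a-b)=\delta^2\Bigl[\tfrac{a_3}{4}\delta^2+\tfrac{3a_3}{4}\sigma^2+a_2\sigma+a_1\Bigr].
\]
Minimizing $\tfrac{3a_3}{4}\sigma^2+a_2\sigma$ over $\sigma$ gives $-a_2^2/(3a_3)$. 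Hence the bracket is at least $\tfrac{a_3}{4}\delta^2-\ell$, which is exactly \eqref{eq:cubicmonotone}. This matches the definition of $\ell$, which confirms the normalization.

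\emph{Shifted inequality \eqref{eq:cubicshiftineq}.} Split
\[
 (p(a)-p(b))(a-b-\xi)=(p(a)-p(b))(a-b)-(p(a)-p(b))\xi.
\]
The first term is handled by \eqref{eq:cubicmonotone}. For the cross term, use the factorization $|p(a)-p(b)|=|\delta|\,|m(a,b)|$, where
\[
 |m(a,b)|\leq C(1+a^2+b^2).
\]
The difficulty is that Young's inequality must not produce a term $(1+a^2+b^2)^2|\xi|^2$, which is too strong.

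To avoid this, I would write $|m|\leq C(1+a^2+b^2)^{1/2}\,m_0$ with
\[
 m_0=|m|/(1+a^2+b^2)^{1/2}\leq C(1+a^2+b^2)^{1/2}.
\]
Young's inequality then gives
\[
 |\delta||m||\xi|\leq\varepsilon\,\delta^2|m|+C_\varepsilon|m||\xi|^2/(\ldots),
\]
but this still leaves $|m|$ rather than $(1+a^2+b^2)$ multiplying $|\xi|^2$. The cleaner route is to keep the positive quadratic part from the exact identity rather than discard it. With $q=a_3(a^2+ab+b^2)$ we have
\[
 (p(a)-p(b))(a-b)\geq\delta^2\Bigl(\tfrac12 q+\tfrac{a_3}{8}\delta^2\Bigr)-C\delta^2,
\]
where $q\geq\tfrac{a_3}{4}(a^2+b^2)$ because $a^2+ab+b^2\geq\tfrac12(a^2+b^2)$. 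This retains a term $c\,\delta^2(a^2+b^2)$. Now $|m|\leq C(1+q)$, so
\[
 |\delta||m||\xi|\leq\tfrac14\delta^2 q+C\delta^2+C(1+q)|\xi|^2,
\]
by Young's inequality applied separately to the $q$-part, $\sqrt q|\delta|\cdot\sqrt q|\xi|$, and to the constant part. The term $\tfrac14\delta^2q$ is absorbed, $C\delta^2$ enters $C_*|a-b|^2$, and $C(1+q)|\xi|^2\leq C_*(1+a^2+b^2)|\xi|^2$.

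The main obstacle is this cross-term bookkeeping: retaining a fraction of the weighted term $\delta^2(a^2+b^2)$ from the monotonicity identity so the $\xi$-contribution carries only one power of $1+a^2+b^2$.
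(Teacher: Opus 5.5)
Your proposal is correct and follows essentially the paper's argument. The same exact factorization (with $\sigma=a+b$ in place of the paper's midpoint $m=(a+b)/2$) gives \eqref{eq:cubicmonotone} and the bound on $p'$. For \eqref{eq:cubicshiftineq}, the paper likewise retains a weighted term $c(1+a^2+b^2)|a-b|^2$ from the monotonicity identity and absorbs the cross term $(p(a)-p(b))\xi$ into it by Young's inequality, so that only one power of $1+a^2+b^2$ multiplies $|\xi|^2$. The differences are cosmetic: the paper recovers the quartic term from $1+a^2+b^2\geq|a-b|^2/2$, whereas you keep $\tfrac{a_3}{8}\delta^4$ directly from the identity, and the abandoned detour via $m_0$ can simply be deleted.
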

\begin{proof}
The positive quartic leading term of $p(s)s$ proves \eqref{eq:cubiccoercivity}. For $w=a-b$ and $m=(a+b)/2$, direct expansion gives
\[
 (p(a)-p(b))w
 =\frac{a_3}{4}w^4+
 \left[3a_3\left(m+\frac{a_2}{3a_3}\right)^2
       +a_1-\frac{a_2^2}{3a_3}\right]w^2.
\]
This proves \eqref{eq:cubicmonotone}; completing the square in $p'$ gives the derivative bound. Polynomial expansion and Young's inequality also give, with $\Theta=1+a^2+b^2$,
\[
 |p(a)-p(b)|\leq C\Theta|w|,\qquad
 (p(a)-p(b))w\geq c\Theta w^2-Cw^2
\]
for suitable $c>0$ and $C$. Subtract the term $(p(a)-p(b))\xi$ and use
$C\Theta|w\xi|\leq(c/2)\Theta w^2+C'\Theta\xi^2$.
Since $\Theta\geq w^2/2$, the result is \eqref{eq:cubicshiftineq}.
\end{proof}

For differences of trajectories, define
\begin{equation}\label{eq:cubicerror}
 \mathfrak e_T(w)
 =\norm{w}_{C([-r,T];H)}^2
  +\norm{w}_{L^2(0,T;V)}^2+\norm{w}_{L^4(Q_T)}^4.
\end{equation}
This quantity records different powers of the natural norms and is not itself a norm.

\begin{theorem}[Global cubic well-posedness and stability]\label{thm:cubicwellposed}
Every $\mu\in\M(I)$ and $\varphi\in X$ determine a unique global cubic weak solution. For each $T>0$,
\begin{equation}\label{eq:cubicapriori}
 \sup_{t\in[-r,T]}\norm{u(t)}_H^2+
 \int_0^T\norm{u(t)}_V^2\,dt+\norm{u}_{L^4(Q_T)}^4
 \leq C_{T,D,M}
\end{equation}
uniformly for $\varphi$ in a bounded set $D\subset X$ and $\norm{\mu}_{\mathrm{TV}}\leq M$.

For $u=u_\mu^\varphi$, $v=u_\nu^\psi$, define the reference residual
\[
 \rho(t)=\int_I B(v(t+\theta))\,d(\mu-\nu)(\theta).
\]
For $\varphi,\psi\in D$ and $\norm{\mu}_{\mathrm{TV}},\norm{\nu}_{\mathrm{TV}}\leq M$,
\begin{equation}\label{eq:cubicresidual}
 \mathfrak e_T(u-v)
 \leq C_{T,D,M}\left(
     \norm{\varphi-\psi}_X^2+\norm{\rho}_{L^2(0,T;H)}^2\right).
\end{equation}
Consequently,
\begin{equation}\label{eq:cubicTV}
 \mathfrak e_T(u_\mu^\varphi-u_\nu^\psi)
 \leq C_{T,D,M}\left(
    \norm{\varphi-\psi}_X^2+\norm{\mu-\nu}_{\mathrm{TV}}^2\right).
\end{equation}
In particular, $S_\mu^p$ is a continuous semiflow, locally Lipschitz in the initial history, and total-variation Lipschitz stability holds in
$C([-r,T];H)\cap L^2(0,T;V)$.
\end{theorem}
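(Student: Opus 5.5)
We follow the Galerkin scheme of Theorem~\ref{thm:wellposed}, replacing the bounded-ball Lipschitz arguments by monotonicity of the cubic. Take $u_m=P_m\varphi$ on $I$ and solve the projected system \eqref{eq:galerkin} with $f(u)=p(u)$. Since $H_m$ consists of smooth eigenfunctions, $p$ is locally Lipschitz on $H_m$, so a local solution exists. Testing with $u_m$ and using the first inequality in \eqref{eq:cubiccoercivity}, integrated over $\Omega$, gives
\[
 \frac{d}{dt}\norm{u_m}_H^2+\norm{u_m}_V^2+2c_p\norm{u_m}_{L^4}^4
 \leq C\Bigl(1+\sup_{[t-r,t]}\norm{u_m}_H^2\Bigr).
\]
The constant depends on $C_p|\Omega|$, $\norm{h}_{V'}$, $b_0,b_1$ and $M$. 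Lemma~\ref{lem:delaygronwall} and integration then yield \eqref{eq:cubicapriori} for $u_m$, uniformly in $m$, in $\varphi\in D$ and in $\norm{\mu}_{\mathrm{TV}}\le M$.

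For the time derivative, $p(u_m)$ is bounded in $L^{4/3}(Q_T)$. The projection $P_m$ does not act uniformly on $L^{4/3}$, so we bound $\partial_tu_m$ instead in $L^{4/3}(0,T;V')$. When $d\le3$ we have $V\hookrightarrow L^4$, and hence $L^{4/3}\hookrightarrow V'$; since $P_m$ is a contraction on $V'$, this gives the uniform bound. Aubin--Lions--Simon with exponent $4/3$ still gives strong convergence in $L^2(0,T;H)$ and a.e.\ convergence. Then $p(u_m)\to p(u)$ weakly in $L^{4/3}(Q_T)$, because the sequence is bounded there and converges a.e., and the delay terms converge by \eqref{eq:delayLtwo}. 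The limit therefore solves the equation in $V'$ for a.e.\ time.

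The limit has $\partial_tu\in\mathcal Z_T$ directly from the equation. Continuity $u\in C([0,T];H)$ and the energy identity for $\norm{u}_H^2$ follow from the standard Lions--Magenes argument for $u\in L^2(V)\cap L^4(Q_T)$ with $\partial_tu\in L^2(V')+L^{4/3}(Q_T)$, since this pairing is the duality $(L^2V\cap L^4)'$. I expect this chain-rule justification in the non-Hilbert sum space to be the main technical obstacle; it has to be stated carefully because the Galerkin approximations are not used for the difference estimates. The initial trace is identified as in Theorem~\ref{thm:wellposed}.

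For \eqref{eq:cubicresidual}, let $w=u-v$ and use the energy identity for $w$, which is legitimate because both solutions lie in the class above. Split the delay difference as in Lemma~\ref{lem:residual}. By \eqref{eq:cubicmonotone}, the reaction term contributes $\tfrac{a_3}{4}\norm{w}_{L^4}^4-\ell\norm{w}_H^2$ on the left. The delay Lipschitz term needs a common $H$-bound $R_T$, which \eqref{eq:cubicapriori} supplies through \eqref{eq:delaylip}. This gives
\[
 \frac{d}{dt}\norm{w}_H^2+\norm{w}_V^2+\tfrac{a_3}{2}\norm{w}_{L^4}^4
 \leq C_{T,D,M}\sup_{[t-r,t]}\norm{w}_H^2+\norm{\rho}_H^2.
\]
Lemma~\ref{lem:delaygronwall}, applied with $a=\int_0^T\norm{\rho}_H^2$ after integrating, then yields \eqref{eq:cubicresidual}. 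Uniqueness is the case $\mu=\nu$, $\varphi=\psi$.

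Inequality \eqref{eq:cubicTV} follows from $\norm{\rho(t)}_H\le(b_0+b_1R_T)\norm{\mu-\nu}_{\mathrm{TV}}$, exactly as in Theorem~\ref{thm:tvcontinuous}. The semiflow property and local Lipschitz dependence follow as in Corollary~\ref{cor:initialdata}.
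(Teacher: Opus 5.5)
Your proposal is correct, but it constructs solutions differently from the paper. The paper does not use Galerkin approximation in the cubic case. It first treats histories with $\varphi(0)\in V$ by writing the shifted equation as a forced subgradient flow $\partial_tw+\partial\Phi_z(w)\ni P_\mu(t)$. Local solutions come from Br\'ezis' contraction estimate and a short-time fixed point in the history, and they are globalized using \eqref{eq:cubicenergy} and the convex-energy inequality \eqref{eq:cubicPhi}. After proving the difference estimate for these regular solutions, the paper reaches a general $\varphi\in X$ through the regularized histories $E(1/n)\varphi$. These are Cauchy in $C([-r,T];H)\cap L^2(0,T;V)\cap L^4(Q_T)$ by \eqref{eq:cubicresidual}, and the reactions converge strongly in $L^{4/3}(Q_T)$ by \eqref{eq:cubicreactioncontinuity}.

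Your scheme handles rough histories directly, since $P_m\varphi\to\varphi$ in $X$. It replaces the constructive Cauchy argument by compactness. You bound $\partial_tu_m$ in $L^{4/3}(0,T;V')$, correctly routing this through $L^{4/3}\hookrightarrow V'$ and the $V'$-contractivity of $P_m$, since $P_m$ is not uniformly bounded on $L^{4/3}$. You then apply Aubin--Lions--Simon and identify the weak $L^{4/3}$ limit of $p(u_m)$ from a.e.\ convergence. This is more elementary and self-contained: it needs no maximal monotone theory and no fixed point.

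What the paper's route buys is strong-solution regularity, $\partial_tw\in L^2(0,T;H)$, together with \eqref{eq:cubicPhi}. Both are reused in Proposition~\ref{prop:cubicabsorbing} to obtain the eventual $V$-bound. Your construction does not produce them, so they would have to be derived separately there.

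Two small adjustments to your write-up. First, with $\partial_tu_m$ only in $L^{4/3}(0,T;V')$, the trace argument of Theorem~\ref{thm:wellposed} gives $u_m(0)\rightharpoonup u(0)$ in $V'$ rather than in $H$; this still identifies $u(0)=\varphi(0)$. Second, the finite-dimensional reaction is well defined because $H_m\subset V\hookrightarrow L^6$, not because the eigenfunctions are smooth up to a Lipschitz boundary.

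Both routes rely on the same chain rule, pairing $\mathcal Z_T$ with $L^2(0,T;V)\cap L^4(Q_T)$, for uniqueness and for \eqref{eq:cubicresidual} on arbitrary cubic weak solutions.
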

\begin{proof}
We first construct solutions with $\varphi(0)\in V$ by a monotone-operator argument; this also justifies the estimates before passage to rough histories. Let $z=A^{-1}h\in V$ and write $w=u-z$. Put
\[
 \Psi(s)=\int_0^s p(\sigma)\,d\sigma+\frac{\ell}{2}s^2+C_\Psi.
\]
Choose $C_\Psi$ large enough that $\Psi(s)\geq c_\Psi|s|^4$ for some $c_\Psi>0$. This is possible because its leading coefficient is positive. Also $\Psi''=p'+\ell\geq0$. The functional
\begin{equation}\label{eq:cubicfunctional}
 \Phi_z(w)=
 \begin{cases}
 \displaystyle\frac12\norm{w}_V^2+\int_\Omega\Psi(w+z)\,dx,
     &w\in V,\\
 +\infty,&w\in H\setminus V,
 \end{cases}
\end{equation}
is proper, convex, and lower semicontinuous on $H$, with dense domain. Here $z\in V\hookrightarrow L^4(\Omega)$ is essential. Its Hilbert-space subgradient is the $H$-realization of
$Aw+p(w+z)+\ell(w+z)$. Thus \eqref{eq:cubicequation} becomes
\begin{equation}\label{eq:cubicsubgradient}
 \partial_tw+\partial\Phi_z(w)\ni
 P_\mu(t),\qquad P_\mu(t)=\ell u(t)+\calD_\mu u(t).
\end{equation}

For a prescribed continuous trajectory on the right, forced subgradient evolution is well posed and satisfies the contraction estimate
\[
 \norm{w_1(t)-w_2(t)}_H
 \leq\norm{w_1(0)-w_2(0)}_H
       +\int_0^t\norm{P_1(s)-P_2(s)}_H\,ds;
\]
see \cite{brezis1973}. Since $P_\mu$ is locally Lipschitz as a history functional, this estimate gives a contraction on a sufficiently short interval, with the prescribed negative-time history fixed. For initial data in the domain of $\Phi_z$, the resulting solution has
$\partial_tw\in L^2(0,T;H)$ and bounded $\Phi_z(w)$ on each compact interval of existence. This suffices for the following energy computations. An atom of $\mu$ at zero is included in this same history fixed-point argument.

Testing \eqref{eq:cubicequation} by $u$, using the quartic bound in \eqref{eq:cubiccoercivity}, and estimating the forcing in $V',V$ gives
\begin{equation}\label{eq:cubicenergy}
 \frac{d}{dt}\norm{u(t)}_H^2+\norm{u(t)}_V^2
       +c\norm{u(t)}_{L^4}^4
 \leq C\left(1+\sup_{s\in[t-r,t]}\norm{u(s)}_H^2\right).
\end{equation}
Delay Gronwall first bounds the $H$-norm; integration then bounds both remaining terms. The right side of \eqref{eq:cubicsubgradient} is bounded in $H$ on finite intervals. The subgradient energy inequality
\begin{equation}\label{eq:cubicPhi}
 \frac{d}{dt}\Phi_z(w(t))+
          \frac12\norm{\partial_tw(t)}_H^2
 \leq\frac12\norm{P_\mu(t)}_H^2
\end{equation}
precludes loss of the regularity needed for continuation from data in $V$. Thus these solutions are global.

For two such solutions, test their difference equation by $q=u-v$ and use \eqref{eq:cubicmonotone}. With
$Y_q(t)=\sup_{s\in[-r,t]}\norm{q(s)}_H^2$, Young's inequality and the delay Lipschitz bound yield
\[
 \frac{d}{dt}\norm{q(t)}_H^2+\norm{q(t)}_V^2
       +c\norm{q(t)}_{L^4}^4
 \leq C_{T,D,M}Y_q(t)+C\norm{\rho(t)}_H^2.
\]
Integration and Gronwall prove \eqref{eq:cubicresidual}.

For general $\varphi\in X$, take $\varphi_n(\theta)=E(1/n)\varphi(\theta)$.
Then $\varphi_n\to\varphi$ in $X$, $\varphi_n(0)\in V$, and the corresponding solutions are uniformly bounded by \eqref{eq:cubicapriori}. Estimate \eqref{eq:cubicresidual} with the same $\mu$ shows that they are Cauchy in
$C([-r,T];H)$, $L^2(0,T;V)$, and $L^4(Q_T)$. Polynomial growth gives
\begin{equation}\label{eq:cubicreactioncontinuity}
 \norm{p(u)-p(v)}_{L^{4/3}(Q_T)}
 \leq C_T\left(1+\norm{u}_{L^4(Q_T)}^2
                   +\norm{v}_{L^4(Q_T)}^2\right)
       \norm{u-v}_{L^4(Q_T)}.
\end{equation}
Hence the reaction terms converge in $L^{4/3}(Q_T)$; the other terms converge in the corresponding variational spaces. The limit has the prescribed continuous $H$-trace and solves \eqref{eq:cubicequation}.

For solutions in Definition~\ref{def:cubicweak}, the energy chain rule is justified by time regularization in
$L^2(0,T;V)\cap L^4(Q_T)$: the derivative decomposes into
$L^2(0,T;V')$ and $L^{4/3}(Q_T)$ terms, paired with those two spaces, respectively. Consequently the same difference inequality proves uniqueness and \eqref{eq:cubicresidual} for all cubic weak solutions. The growth bound on $B$ then proves \eqref{eq:cubicTV}. Time translation, uniqueness, and the continuous trajectories give the jointly continuous semiflow.
\end{proof}

\subsection{Quantitative weak-star stability for rough histories}

The logarithmic estimate for the abstract reaction cannot be applied directly to $p$: an $H$-ball does not bound its Lipschitz constant. Instead, we use the heat-propagated kernel residual as a corrector in the cubic energy estimate.

\begin{theorem}[Quantitative cubic kernel stability]\label{thm:cubicBL}
Under the hypotheses of this section, for every $T>0$, bounded $D\subset X$, and $M\geq0$,
\begin{equation}\label{eq:cubicBL}
 \mathfrak e_T(u_\mu^\varphi-u_\nu^\psi)
 \leq C_{T,D,M}\left(
 \norm{\varphi-\psi}_X^2+\varpi(\dBL(\mu,\nu))\right)
\end{equation}
whenever $\varphi,\psi\in D$ and both total variations are at most $M$. In particular,
\begin{align}\label{eq:cubicBLnorm}
 &\sup_{0\leq t\leq T}
  \norm{S_\mu^p(t)\varphi-S_\nu^p(t)\psi}_X
  +\norm{u_\mu^\varphi-u_\nu^\psi}_{L^2(0,T;V)}
 \notag\\
 &\qquad\leq C_{T,D,M}\left(
 \norm{\varphi-\psi}_X+
       \sqrt{\varpi(\dBL(\mu,\nu))}\right).
\end{align}
No temporal or spatial regularity beyond boundedness in $X$ is required of $D$.
\end{theorem}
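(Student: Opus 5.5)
Let $u=u_\mu^\varphi$, $v=u_\nu^\psi$ and $q=u-v$. The plan is to remove the heat-propagated kernel residual from $q$ and run the cubic energy estimate on what is left. Define
\[
 R(t)=\int_0^tE(t-s)\rho(s)\,ds,\qquad
 \rho(s)=\int_IB(v(s+\theta))\,d(\mu-\nu)(\theta).
\]
Then $R$ is the mild solution of $\partial_tR+AR=\rho$ with $R(0)=0$. By Fubini, Lemma~\ref{lem:integratedkernel} with $g=B(v)$, and the a priori bound \eqref{eq:cubicapriori}, which gives $\norm{g}_\infty\leq b_0+b_1R_T$,
\[
 \sup_{t\leq T}\norm{R(t)}_H\leq C\varpi(d),\qquad d=\dBL(\mu,\nu).
\]
Since $\norm{\rho}_{L^\infty(0,T;H)}\leq 2M(b_0+b_1R_T)$, parabolic smoothing gives $R\in L^2(0,T;V)$. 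Testing $\partial_tR+AR=\rho$ by $R$ and using only $\norm{R}_\infty\lesssim\varpi(d)$ gives
\[
 \norm{R}_{L^2(0,T;V)}^2\leq\int_0^T|(\rho,R)|\leq C\varpi(d).
\]
The key point is that this bound is linear in $\varpi(d)$, not quadratic. To control the $L^\infty(Q_T)$ or $L^4$ size of $R$, I would interpolate between the $H$-bound and a uniform bound in $D(A^{s})$ for $s<1$, which comes from $\rho\in L^\infty(0,T;H)$. In $d\leq3$, taking $s>3/4$ gives $D(A^s)\hookrightarrow L^\infty$. This yields $\norm{R}_{L^\infty(0,T;L^4)}$ and $\norm{R}_{L^\infty(Q_T)}$ bounded by $C\varpi(d)^{\beta}$ for some $\beta>0$. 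Possibly only boundedness is needed.

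Next set $\tilde q=q-R$. It satisfies
\[
 \partial_t\tilde q+A\tilde q+p(u)-p(v)=\calD_\mu u-\calD_\mu v,
\]
with $\tilde q$ equal to $\varphi-\psi$ on $I$ and to $\varphi(0)-\psi(0)$ at $t=0$. The $\rho$ term has been absorbed into $R$. Test by $\tilde q=q-R$ and apply \eqref{eq:cubicshiftineq} with $a=u$, $b=v$, $\xi=R$:
\[
 (p(u)-p(v),q-R)\geq c_*\norm{q}_{L^4}^4-C_*\norm{q}_H^2-C_*\int_\Omega(1+u^2+v^2)R^2.
\]
The last term is bounded by
\[
 C(1+\norm{u}_{L^4}^2+\norm{v}_{L^4}^2)\norm{R}_{L^4}^2
 \leq C(1+\norm{u}_{L^4}^2+\norm{v}_{L^4}^2)\norm{R}_H\norm{R}_{L^\infty}.
\]
Its time integral is therefore at most $C\varpi(d)$, using the $L^4(Q_T)$ bounds from \eqref{eq:cubicapriori} and $\norm{R}_{L^\infty}\leq C$. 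For the delay term, the bounded-ball Lipschitz property of $B$ with $R_T$ from \eqref{eq:cubicapriori} gives
\[
 |(\calD_\mu u-\calD_\mu v,\tilde q)|\leq C\sup_{[t-r,t]}\norm{q}_H\norm{\tilde q}_H,
\]
and $\norm{q}_H\leq\norm{\tilde q}_H+C\varpi(d)$. Substituting $\norm{q}_H^2\leq2\norm{\tilde q}_H^2+C\varpi(d)^2$ in the $C_*$ term, we obtain
\[
 \frac{d}{dt}\norm{\tilde q}_H^2+\norm{\tilde q}_V^2+c\norm{q}_{L^4}^4
 \leq C\sup_{[t-r,t]}\norm{\tilde q}_H^2+C\varpi(d)^2+C\,\Gamma(t),
\]
where $\int_0^T\Gamma\leq C\varpi(d)$. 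The delay Gronwall Lemma~\ref{lem:delaygronwall} then gives
\[
 \sup\norm{\tilde q}_H^2+\int\norm{\tilde q}_V^2+\norm{q}_{L^4(Q_T)}^4\leq C(\norm{\varphi-\psi}_X^2+\varpi(d)).
\]
Finally recombine $q=\tilde q+R$, using $\norm{R}_{C([0,T];H)}^2\leq C\varpi(d)^2\leq C\varpi(d)$ for $d$ bounded and $\norm{R}^2_{L^2(0,T;V)}\leq C\varpi(d)$. This gives \eqref{eq:cubicBL}. For large $d$ one can also use the TV bound \eqref{eq:cubicTV} with $\varpi(d)\geq\varpi(d_0)$. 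Taking square roots yields \eqref{eq:cubicBLnorm}.

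The main obstacle is rigor in the energy identity for $\tilde q$, since $R$ is only a mild solution and the cubic class has derivative in $\mathcal Z_T$. I would handle this by noting that $R\in L^2(0,T;V)\cap L^\infty(Q_T)$ with $\partial_tR\in L^2(0,T;V')$. Then $\tilde q$ lies in the same regularity class as cubic differences, and the chain-rule justification in the proof of Theorem~\ref{thm:cubicwellposed} applies. A secondary obstacle is the $L^\infty$ bound on $R$. If the $D(A^s)\hookrightarrow L^\infty$ argument on Lipschitz domains is delicate, I would instead use the $L^4$ smoothing of the heat semigroup, $\norm{E(t)}_{H\to L^4}\lesssim t^{-d/8}$, which is integrable since $d\leq3$. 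This gives a uniform $L^\infty(0,T;L^4)$ bound on $R$. Then $\norm{R}_{L^4}^2\leq\norm{R}_H^{1/2}\norm{R}_{L^6}^{3/2}$-type interpolation replaces the $L^\infty$ argument and gives a power of $\varpi(d)$. If that power is too weak, I would fall back on Young's inequality with $\norm{q}_{L^4}^4$ absorbed on the left.
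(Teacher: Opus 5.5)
Your proposal is correct and follows the paper's proof essentially step by step: the same heat-propagated corrector $R$ with $\sup_t\norm{R(t)}_H$ and $\norm{R}_{L^2(0,T;V)}^2$ both controlled linearly by $\varpi(d)$, the same use of \eqref{eq:cubicshiftineq} with $\xi=R$ and $\norm{R}_{L^4}^2\leq\norm{R}_{L^\infty}\norm{R}_H$, and delay Gronwall for $q-R$. The only variation is how you get the uniform $L^\infty$ bound on $R$. You go through $D(A^s)\hookrightarrow L^\infty$ for $s>3/4$, whereas the paper applies the ultracontractive estimate \eqref{eq:ultracontractive} directly to the Duhamel integral. On a Lipschitz domain that embedding should itself be derived from \eqref{eq:ultracontractive}, not from Sobolev embedding, because $D(A)$ need not lie in $H^2(\Omega)$. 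With that justification the argument closes and your weaker $L^6$ fallback is never needed.
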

\begin{proof}
Let $u=u_\mu^\varphi$, $v=u_\nu^\psi$, $q=u-v$, and
$\delta=\varpi(\dBL(\mu,\nu))$. Define $\rho$ as in Theorem~\ref{thm:cubicwellposed} and
\[
 R(t)=\int_0^t E(t-s)\rho(s)\,ds\quad(t\geq0),\qquad
 R(t)=0\quad(-r\leq t\leq0).
\]
Then $\partial_tR+AR=\rho$, $R(0)=0$. The common trajectory bound and the growth of $B$ give
$\norm{\rho}_{L^\infty(0,T;H)}\leq C_{T,D,M}$. Lemma~\ref{lem:integratedkernel}, applied to $g=B(v)$, gives
\begin{equation}\label{eq:cubiccorrectorH}
 \sup_{0\leq t\leq T}\norm{R(t)}_H\leq C_{T,D,M}\delta.
\end{equation}

For the Dirichlet heat semigroup in $d\leq3$,
\begin{equation}\label{eq:ultracontractive}
 \norm{E(t)}_{\mathcal L(H,L^\infty)}
 \leq C t^{-d/4}e^{-\lambda_1t/2},\qquad t>0.
\end{equation}
Indeed, domination by the Euclidean heat kernel gives the factor $t^{-d/4}$, and splitting $E(t)=E(t/2)E(t/2)$ combines it with the $H$ spectral gap. The right side is integrable in time because $d<4$. Consequently
\[
 \sup_{0\leq t\leq T}\norm{R(t)}_{L^\infty(\Omega)}
 \leq C_\Omega\norm{\rho}_{L^\infty(0,T;H)}
 \leq C_{T,D,M}.
\]
Interpolation and \eqref{eq:cubiccorrectorH} imply
\begin{equation}\label{eq:cubiccorrectorLfour}
 \sup_{0\leq t\leq T}\norm{R(t)}_{L^4}^2
 \leq\sup_t\norm{R(t)}_{L^\infty}\norm{R(t)}_H
 \leq C_{T,D,M}\delta.
\end{equation}
The linear energy identity also gives
\begin{equation}\label{eq:cubiccorrectorV}
 \int_0^T\norm{R(t)}_V^2\,dt
 \leq\int_0^T(\rho(t),R(t))_H\,dt
 \leq C_{T,D,M}\delta.
\end{equation}

Set $\eta=q-R$. Its equation is
\[
 \partial_t\eta+A\eta+p(u)-p(v)
 =\calD_\mu u-\calD_\mu v.
\]
Test by $\eta$. Applying \eqref{eq:cubicshiftineq} pointwise with
$(a,b,\xi)=(u,v,R)$ bounds the reaction contribution from below by
\[
 c_*\norm{q}_{L^4}^4-C_*\norm{q}_H^2
 -C_*\int_\Omega(1+u^2+v^2)R^2\,dx.
\]
The last integral is at most
\[
 C_\Omega\left(1+\norm{u}_{L^4}^2+\norm{v}_{L^4}^2\right)
       \norm{R}_{L^4}^2.
\]
Its time integral is bounded by $C_{T,D,M}\delta$, using
\eqref{eq:cubicapriori}, \eqref{eq:cubiccorrectorLfour}, and
Cauchy--Schwarz in time. The delay term is bounded by
$C_{T,D,M}(\norm{\eta(t)}_H^2+
 \sup_{s\in[-r,t]}\norm{q(s)}_H^2)$.
Since $q=\eta+R$, Gronwall gives
\[
 \norm{\eta}_{C([-r,T];H)}^2+
 \norm{\eta}_{L^2(0,T;V)}^2+
 \norm{q}_{L^4(Q_T)}^4
 \leq C_{T,D,M}
       \left(\norm{\varphi-\psi}_X^2+\delta\right).
\]
Here \eqref{eq:cubiccorrectorH} contributes $\delta^2$; it is absorbed into
$C_M\delta$ because $\dBL(\mu,\nu)\leq2M$. Add
\eqref{eq:cubiccorrectorH} and \eqref{eq:cubiccorrectorV} to obtain
\eqref{eq:cubicBL}. The case $M=0$ has $R=0$ and follows directly from
\eqref{eq:cubicresidual}.
\end{proof}

\begin{corollary}[Cubic concentration, quadrature, and energy convergence]\label{cor:cubiclimits}
If $\mu_n\weakstar\mu$ and $\varphi_n\to\varphi$ in $X$, then
\[
 u_{\mu_n}^{\varphi_n}\longrightarrow u_\mu^\varphi
 \quad\text{in }C([-r,T];H)\cap L^2(0,T;V)\cap L^4(Q_T),
\]
and the time derivatives converge in $L^{4/3}(0,T;V')$.
For a fixed common history, weak-star convergence in
$C([-r,T];H)\cap L^2(0,T;V)$ is uniform over bounded subsets of $X$.

Under the first-moment concentration assumption of
Corollary~\ref{cor:distributedtodiscrete}, the error in those two spaces is
$O(\sqrt{\varepsilon\log(e+\varepsilon^{-1})})$.
For the signed atomic quadrature of Corollary~\ref{cor:quadrature}, it is
$O(\sqrt{\Delta\log(e+\Delta^{-1})})$, uniformly on bounded history sets and bounded total-variation classes.
\end{corollary}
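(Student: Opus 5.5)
The first assertion follows the pattern of Corollary~\ref{thm:weakstarcontinuous}, with the cubic residual estimate \eqref{eq:cubicresidual} in place of Lemma~\ref{lem:residual}. The uniform boundedness principle gives $M=\sup_n\norm{\mu_n}_{\mathrm{TV}}<\infty$. All histories $\varphi_n$ and $\varphi$ lie in one bounded set $D$. Take the fixed reference solution $v=u_\mu^\varphi$ in the role of $u_\nu^\psi$, so that the residual is $\rho_n(t)=\int_I B(v(t+\theta))\,d(\mu_n-\mu)(\theta)$. By \eqref{eq:fixedtrajectoryweakstar} in Lemma~\ref{lem:weakstaruniform}, $\rho_n\to0$ uniformly on $[0,T]$, hence in $L^2(0,T;H)$. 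Then \eqref{eq:cubicresidual} gives $\mathfrak e_T(u_{\mu_n}^{\varphi_n}-u_\mu^\varphi)\to0$, which is convergence in $C([-r,T];H)\cap L^2(0,T;V)\cap L^4(Q_T)$.

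For the time derivatives, I would subtract the two equations:
\[
 \partial_t(u_n-u)=-A(u_n-u)-(p(u_n)-p(u))+(\calD_{\mu_n}u_n-\calD_{\mu_n}u)+\rho_n .
\]
Each term is handled separately. $A(u_n-u)\to0$ in $L^2(0,T;V')$. The reaction difference tends to zero in $L^{4/3}(Q_T)$ by \eqref{eq:cubicreactioncontinuity}, using the uniform $L^4$ bound \eqref{eq:cubicapriori}. The delay difference tends to zero in $L^2(0,T;H)$ by \eqref{eq:delayLtwo}, with the common $H$-bound and the bound $M$. Finally, $\rho_n\to0$ in $L^2(0,T;H)$. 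The continuous embedding $\mathcal Z_T\hookrightarrow L^{4/3}(0,T;V')$ then gives convergence of the derivatives in $L^{4/3}(0,T;V')$. Here $L^{4/3}(\Omega)\hookrightarrow V'$ because $V\hookrightarrow L^4$ for $d\leq3$.

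For uniformity over bounded $D$ with a common history, I would not use the residual argument, since it depends on a fixed reference trajectory. Instead I would use Theorem~\ref{thm:cubicBL} with $\varphi=\psi$. By \eqref{eq:cubicBLnorm}, the error in $C([-r,T];H)\cap L^2(0,T;V)$ is at most $C_{T,D,M}\sqrt{\varpi(\dBL(\mu_n,\mu))}$, uniformly in $\varphi\in D$. Lemma~\ref{lem:BLweakstar} gives $\dBL(\mu_n,\mu)\to0$ on the bounded class, and hence the uniform convergence.

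The rates are direct substitutions into \eqref{eq:cubicBLnorm}. In the concentration setting, the proof of Corollary~\ref{cor:distributedtodiscrete} gives $\dBL(\mu_\varepsilon,\alpha\delta_{-\tau})\leq|\alpha|C_\rho\varepsilon$, and both total variations equal $|\alpha|$. For quadrature, \eqref{eq:quadraturemeasure} gives $\dBL\leq M\Delta$ and $\norm{Q_\Delta\mu}_{\mathrm{TV}}\leq M$. In both cases the monotonicity of $\varpi$ and the scaling $\varpi(cd)\leq C_c\varpi(d)$ convert the bound into $\sqrt{\varepsilon\log(e+\varepsilon^{-1})}$, respectively $\sqrt{\Delta\log(e+\Delta^{-1})}$. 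The main point of care is the derivative convergence: the cubic term lives only in $L^{4/3}(Q_T)$, so the convergence can be claimed in $L^{4/3}(0,T;V')$ rather than in $L^2(0,T;V')$, and this requires $d\leq3$ for the embedding into $V'$. The rest is bookkeeping of constants uniform in $n$.
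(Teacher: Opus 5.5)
Your proposal is correct and essentially matches the paper's proof. The derivative convergence splits the delay difference into a trajectory part and the fixed-reference residual controlled by Lemma~\ref{lem:weakstaruniform}, and the uniformity and rates come from Theorem~\ref{thm:cubicBL} together with the $\dBL$ bounds of Corollaries~\ref{cor:distributedtodiscrete} and \ref{cor:quadrature}. That is exactly the paper's argument.

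The only variation is in the state convergence with $\varphi_n\to\varphi$. You use the residual estimate \eqref{eq:cubicresidual} plus Lemma~\ref{lem:weakstaruniform}, whereas the paper applies \eqref{eq:cubicBL} directly, which already bounds $\mathfrak e_T$ (including the $L^4(Q_T)$ term) by $\norm{\varphi_n-\varphi}_X^2+\varpi(\dBL(\mu_n,\mu))$. Both routes are valid.
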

\begin{proof}
Weak-star convergence implies bounded total variation and convergence in
$\dBL$. Apply Theorem~\ref{thm:cubicBL}. For derivatives, subtract the equations, use \eqref{eq:cubicreactioncontinuity} and
$L^{4/3}(\Omega)\hookrightarrow V'$, and split the delay difference into its trajectory part and the fixed-reference residual. The first tends to zero in $C([0,T];H)$; the second does so by Lemma~\ref{lem:weakstaruniform}. The concentration and quadrature bounds follow from their corresponding $\dBL$ estimates.
\end{proof}

\subsection{Attractors without a smallness condition}

\begin{proposition}[Uniform cubic absorption and smoothing]\label{prop:cubicabsorbing}
For every fixed $M_0<\infty$, the semiflows $S_\mu^p$, $\mu\in\Kclass_{M_0}$, have a common positively invariant absorbing ball in $X$ and a common compact absorbing set $\calK_*^p\subset X$. For $z=A^{-1}h$ and every $0<s<1$, there is $R_s^p$ such that, for every bounded $D\subset X$,
\begin{equation}\label{eq:cubicfractional}
 \sup_{\mu\in\Kclass_{M_0}}\sup_{\varphi\in D}
 \norm{A^s(u_\mu^\varphi(t)-z)}_H\leq R_s^p
\end{equation}
for all sufficiently large $t$, with a common entry time for that $D$.
The radii and entry times may depend on $M_0$.
\end{proposition}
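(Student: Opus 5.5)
The plan has three stages: an $H$-absorbing ball from quartic coercivity, uniform fractional smoothing, and the compact set. Each follows the template of Propositions~\ref{prop:Habsorbing}, \ref{prop:smoothing} and \ref{prop:compactabsorbing}.

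\emph{Absorbing ball.} Let $y(t)=\norm{u(t)}_H^2$ and $Y(t)=\sup_{s\in[t-r,t]}y(s)$. The energy identity for cubic weak solutions is justified in Theorem~\ref{thm:cubicwellposed}. Testing \eqref{eq:cubicequation} by $u$ gives
\[
 \tfrac12y'+\norm{u}_V^2+\int_\Omega p(u)u\,dx\leq M_0(b_0+b_1\sqrt{Y})\sqrt y+\dual{h}{u}.
\]
I apply the second inequality in \eqref{eq:cubiccoercivity} with $k$ chosen so large that $\lambda_1/2+k>2M_0b_1+1$, and bound $M_0b_1\sqrt{Yy}\leq\frac{M_0b_1}{2}(Y+y)$. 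This produces $y'+ay\leq bY+c$ with $a>b$, where $b=M_0b_1$ and $c$ depends only on $p,h,M_0,b_0$ (through $C_k|\Omega|$). Lemma~\ref{lem:halanay} then gives a common invariant ball $\mathcal B^p_H$ and an entry time $T_H^p(D)$. Large $M_0$ is harmless here, because $k$ is arbitrary.

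\emph{Uniform smoothing.} This is the main obstacle: $p(u)$ is not bounded in $H$ from an $H$-bound alone, so the semigroup argument of Proposition~\ref{prop:smoothing} needs an extra intermediate bound. I would first obtain a uniform $V$-bound, as follows.
\begin{enumerate}[label=(\alph*)]
 \item Integrating the energy inequality \eqref{eq:cubicenergy} over $[t,t+1]$ after entry gives uniform bounds on $\int_t^{t+1}\big(\norm{u}_V^2+\norm{u}_{L^4}^4\big)\,ds$.
 \item Along $w=u-z$, I use the functional $\Phi_z$ from \eqref{eq:cubicfunctional} and its inequality \eqref{eq:cubicPhi}. The right side $P_\mu=\ell u+\calD_\mu u$ is uniformly bounded in $H$ after entry.
 \item The integrated bound in (a) controls $\int_t^{t+1}\Phi_z(w)\,ds$, since $\int\Psi(w+z)\leq C(1+\norm{u}_{L^4}^4)$.
 \item The uniform Gronwall lemma then yields $\Phi_z(w(t))\leq C$, hence $\norm{u(t)}_V\leq R_V^p$, for $t\geq T^p_H(D)+2$.
\end{enumerate}
Justifying (b) for rough data requires approximating by histories with $\varphi(0)\in V$, as in the proof of Theorem~\ref{thm:cubicwellposed}. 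The approximation passes to the limit by \eqref{eq:cubicresidual}.

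Once the $V$-bound holds, $d\leq3$ gives $V\hookrightarrow L^6$, so $\norm{p(u)}_H\leq C(1+\norm{u}_{L^6}^3)\leq C(1+(R_V^p)^3)$. Therefore $F_\mu=-p(u)+\calD_\mu u$ is uniformly bounded in $H$. The variation-of-constants argument of Proposition~\ref{prop:smoothing}, applied to $w$, then gives \eqref{eq:cubicfractional} for every $s<1$. It also gives a uniform bound on $\norm{\partial_tu}_{V'}$.

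\emph{Compact absorbing set.} I define $\calK^p_*$ exactly as $\calK_*$ in Proposition~\ref{prop:compactabsorbing}, with the H\"older constant obtained from \eqref{eq:timeholder}. Compactness follows from Arzel\`a--Ascoli, and late histories lie in $\calK^p_*$ after the additional time $r$. All constants depend on $M_0$ only through $k$ and the feedback bounds, which is consistent with the statement.
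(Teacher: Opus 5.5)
Your proposal is correct and follows the paper's proof step for step. It obtains Halanay absorption from $p(s)s\geq ks^2-C_k$ with $k$ large, then a uniform-Gronwall bound on the shifted convex energy $\Phi_z$ using the windowed bound from \eqref{eq:cubicenergy} and the $H$-bound on $P_\mu$, then $V\hookrightarrow L^6$ with the unit-window semigroup argument, and finally the compact set of Proposition~\ref{prop:compactabsorbing}.

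The only real difference is how \eqref{eq:cubicPhi} is justified for rough histories. The paper restarts the forced subgradient evolution at almost every time where $\Phi_z(u-z)$ is finite. You instead approximate by $E(1/n)\varphi$, which share the entry time because $E(1/n)$ is a contraction, and pass the bound to the limit through \eqref{eq:cubicresidual}. Both routes work, but your limit step uses lower semicontinuity of $\Phi_z$ on $H$ (or weak lower semicontinuity of the $V$-norm), which you should state explicitly.
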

\begin{proof}
Choose $k\geq0$ with $\lambda_1+k>M_0b_1$. By
\eqref{eq:cubiccoercivity},
\[
 \int_\Omega p(u)u\,dx
 \geq k\norm{u}_H^2-C_k|\Omega|.
\]
The energy proof of Proposition~\ref{prop:Habsorbing} now applies with
$a_f=k$ and $c_0=C_k|\Omega|$: that proof uses only the energy identity and this lower bound, not an $H$-valued Lipschitz property of the reaction. It gives a common invariant absorbing history ball of radius $R_H^p$. Write $T_H(D)$ for its entry time.

On every unit interval contained in $[T_H(D),\infty)$, integration of
\eqref{eq:cubicenergy} gives a common bound for
\[
 \int_t^{t+1}\left(\norm{u(s)}_V^2+
                         \norm{u(s)}_{L^4}^4\right)\,ds.
\]
The shifted convex energy \eqref{eq:cubicfunctional} therefore satisfies
\begin{equation}\label{eq:cubicaveragePhi}
 \int_t^{t+1}\Phi_z(u(s)-z)\,ds\leq C,
 \qquad t\geq T_H(D).
\end{equation}
Indeed, $\Psi(s)\leq C(1+|s|^4)$ and
$\norm{u-z}_V^2\leq2\norm{u}_V^2+2\norm{z}_V^2$.
Moreover, $P_\mu=\ell u+\calD_\mu u$ is bounded in $H$ by a common constant $R_P$ on this time region.

Inequality \eqref{eq:cubicPhi} is valid on every positive-time interval starting at a time when $\Phi_z(u-z)$ is finite. Such times have full measure by \eqref{eq:cubicaveragePhi}; restarting the forced subgradient evolution there justifies the inequality for the rough initial histories as well. For $t\geq T_H(D)+1$ and almost every $s\in[t-1,t]$, it gives
\[
 \Phi_z(u(t)-z)\leq\Phi_z(u(s)-z)+\frac12R_P^2(t-s).
\]
Integrating in $s$ and using \eqref{eq:cubicaveragePhi} proves
\[
 \Phi_z(u(t)-z)\leq C+\frac14R_P^2.
\]
The bound extends to every such $t$ by lower semicontinuity. It yields a uniform eventual $V$-bound on $u$, because $\Phi_z(w)\geq\frac12\norm{w}_V^2$.

Now $V\hookrightarrow L^6(\Omega)$ gives
\begin{equation}\label{eq:cubicHforcing}
 \norm{p(u(t))}_H\leq C_p(1+\norm{u(t)}_V^3)\leq C.
\end{equation}
Thus $w=u-z$ solves
\[
 \partial_tw+Aw=-p(u)+\calD_\mu u
\]
with uniformly bounded $H$-valued forcing after entry. The unit-window heat-semigroup argument of Proposition~\ref{prop:smoothing} proves
\eqref{eq:cubicfractional} for $t\geq T_H(D)+2$. It also bounds $u$ in $V$ and $\partial_tu$ in $V'$ uniformly after this time. The interpolation estimate
\eqref{eq:timeholder} gives a common $1/2$-H\"older modulus in $H$ for late histories. After increasing the entry time by $r$, the compact-set construction of Proposition~\ref{prop:compactabsorbing} supplies $\calK_*^p$.
\end{proof}

\begin{theorem}[Cubic global attractors and kernel robustness]\label{thm:cubicattractors}
For every fixed $M_0<\infty$ and every $\mu\in\Kclass_{M_0}$, $S_\mu^p$ has a compact global attractor $\calA_\mu^p\subset X$. All these attractors lie in the common compact set $\calK_*^p$. They are also compact subsets of $X_V=C(I;V)$ and attract every bounded subset of $X$ in the $X_V$ semidistance after the histories have entered $X_V$.

If $\mu_n\weakstar\mu$ in $\Kclass_{M_0}$, then
\begin{equation}\label{eq:cubicUSC}
 \dist_X(\calA_{\mu_n}^p,\calA_\mu^p)\longrightarrow0,\qquad
 \dist_{X_V}(\calA_{\mu_n}^p,\calA_\mu^p)\longrightarrow0.
\end{equation}
These conclusions include distributed-to-discrete limits and signed atomic quadrature. No smallness restriction on $M_0b_1$, and no stronger forcing hypothesis than $h\in V'$, is required.
\end{theorem}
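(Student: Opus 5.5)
The plan is to repeat the abstract attractor argument of Section~\ref{sec:dissipativity} and Section~\ref{sec:usc}, replacing its inputs by their cubic versions.

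\textbf{Existence and common compact set.} Theorem~\ref{thm:cubicwellposed} gives a continuous semiflow $S_\mu^p$ on $X$. Proposition~\ref{prop:cubicabsorbing} gives the common compact absorbing set $\calK_*^p$. The standard global-attractor theorem \cite{hale1988,robinson2001} then produces $\calA_\mu^p$. Strict invariance and absorption of the bounded set $\calA_\mu^p$ by $\calK_*^p$ give $\calA_\mu^p=S_\mu^p(T)\calA_\mu^p\subset\calK_*^p$ for large $T$. Applying \eqref{eq:cubicfractional} to backward preimages inside $\calA_\mu^p$, exactly as in the proof of Theorem~\ref{thm:attractorexistence}, yields a bound $\norm{A^s(\varphi(\theta)-z)}_H\le R_s^p$. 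This bound is uniform over $\varphi\in\calA_\mu^p$, $\theta\in I$, and $\mu\in\Kclass_{M_0}$. The interpolation inequality \eqref{eq:spectralinterpolation} with $s\in(1/2,1)$ then does two things. It shows that attractor histories are $V$-valued and $V$-continuous. It also gives compactness in $X_V$ and the pair estimate \eqref{eq:pairinterpolation}, as in Theorem~\ref{thm:uscV}. The same interpolation, applied to late histories from a bounded $D$, upgrades attraction in $X$ to attraction in $X_V$.

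\textbf{Upper semicontinuity in $X$.} I follow the proof of Theorem~\ref{thm:usc}. I need a uniform finite-horizon estimate
\[
 \sup_{\varphi\in\calK_*^p}\norm{S_\nu^p(T)\varphi-S_\mu^p(T)\varphi}_X\le C_T\,\varpi(\dBL(\mu,\nu))^{1/2}.
\]
This follows from \eqref{eq:cubicBLnorm} with $D=\calK_*^p$ and $M=M_0$; the constant may depend on $T$. Next, fix $T$ and write each $a\in\calA_{\mu_n}^p$ as $a=S_{\mu_n}^p(T)b$ with $b\in\calA_{\mu_n}^p\subset\calK_*^p$. This gives
\[
 \dist_X(\calA_{\mu_n}^p,\calA_\mu^p)\le C_T\,\varpi(\dBL(\mu_n,\mu))^{1/2}+\dist_X(S_\mu^p(T)\calK_*^p,\calA_\mu^p).
\]
I first choose $T$ so that the second term is small; this uses attraction of the compact set $\calK_*^p$. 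Then I let $n\to\infty$, using Lemma~\ref{lem:BLweakstar} in the first term. Growth of the constant $C_T$ in $T$ is harmless for this qualitative conclusion.

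\textbf{Upper semicontinuity in $X_V$.} This follows from the $X$ statement and \eqref{eq:pairinterpolation}, because the fractional bound is common to all $\mu\in\Kclass_{M_0}$. The distributed-to-discrete and quadrature cases are special weak-star convergent sequences in $\Kclass_{M_0}$. Their total variations are bounded by $|\alpha|$ and by $\norm{\mu}_{\mathrm{TV}}$, respectively, so they lie in a fixed class.

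\textbf{Main obstacle.} The delicate point is the uniformity of \eqref{eq:cubicBLnorm} on $\calK_*^p$. Its constant comes from a priori bounds that hold uniformly over bounded histories, and the set $\calK_*^p$ is bounded in $X$, so the theorem applies as stated. One should also check that no $L^4$ condition on histories enters the argument, which Definition~\ref{def:cubicweak} guarantees. No smallness of $M_0b_1$ is needed anywhere, since it was already removed in Proposition~\ref{prop:cubicabsorbing} by taking $k$ large in \eqref{eq:cubiccoercivity}.
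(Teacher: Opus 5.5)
Your proposal is correct and follows essentially the same route as the paper. The attractors come from the continuous semiflow and the common compact absorbing set $\calK_*^p$, and their common fractional bound comes from preimages and \eqref{eq:cubicfractional}. Upper semicontinuity in $X$ comes from Theorem~\ref{thm:cubicBL} on $\calK_*^p$ over a fixed horizon, combined with the invariance--attraction argument of Theorem~\ref{thm:usc}, and the $X_V$ claims come from spectral interpolation with $s\in(1/2,1)$.

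One small correction: for a general signed sequence $K_n\,d\theta\weakstar\alpha\delta_{-\tau}$, the total variations need not be bounded by $|\alpha|$; that bound holds only for the nonnegative $\rho_\varepsilon$ family. They are, however, uniformly bounded by the uniform boundedness principle, which suffices because $M_0$ is arbitrary.
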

\begin{proof}
The continuous semiflow and common compact absorption give the attractors. Theorem~\ref{thm:cubicBL} gives uniform finite-time kernel convergence on $\calK_*^p$, so the invariance and attraction argument in Theorem~\ref{thm:usc} proves the first limit in \eqref{eq:cubicUSC}.

Invariance and \eqref{eq:cubicfractional} imply a common bound
\[
 \sup_{\mu\in\Kclass_{M_0}}\sup_{\varphi\in\calA_\mu^p}
 \sup_{\theta\in I}\norm{A^s(\varphi(\theta)-z)}_H\leq R_s^p,
 \qquad 0<s<1.
\]
For $1/2<s<1$, the common shift cancels in differences. Spectral interpolation therefore gives
\begin{equation}\label{eq:cubicUSCinterpolation}
 \dist_{X_V}(\calA_\nu^p,\calA_\mu^p)
 \leq (2R_s^p)^{1/(2s)}
 \dist_X(\calA_\nu^p,\calA_\mu^p)^{1-1/(2s)}.
\end{equation}
The same estimate for individual histories proves compactness in $X_V$ from compactness in $X$, and proves attraction in $X_V$ by comparing late trajectories with the attractor. This establishes the remaining claims.
\end{proof}

\begin{remark}[Quantitative attractor comparison]\label{rem:cubicattractorrate}
Put
\[
 a_\mu^p(T)=\dist_X(S_\mu^p(T)\calK_*^p,\calA_\mu^p).
\]
There are constants $C,L>0$, uniform on the fixed kernel class, such that
\begin{equation}\label{eq:cubicattractorrate}
 \dist_X(\calA_\nu^p,\calA_\mu^p)
 \leq\inf_{T\geq0}\left\{
 Ce^{LT}\sqrt{\varpi(\dBL(\mu,\nu))}+a_\mu^p(T)\right\}.
\end{equation}
To see the time dependence, start the finite-time comparison in $\calK_*^p$, contained in the common invariant $H$-ball. The $H$-bounds, delay Lipschitz constants, and corrector bounds are then independent of $T$, while
$\int_0^T\norm{u(t)}_{L^4}^2\,dt\leq C(1+T)$ follows from
\eqref{eq:cubicenergy}. The proof of Theorem~\ref{thm:cubicBL} thus has at most exponential growth in its constant. Invariance gives \eqref{eq:cubicattractorrate} as in Theorem~\ref{thm:usc}.
If the additional hypothesis $a_\mu^p(T)\leq C_0e^{-\gamma T}$ holds, optimization gives the conditional rate
$O(\varpi(\dBL(\mu,\nu))^{\gamma/[2(L+\gamma)]})$.
Exponential attraction to the global attractor is not assumed or deduced otherwise.
\end{remark}

\begin{example}[Allen--Cahn reaction]\label{ex:cubicAC}
For $p(s)=\gamma s^3-\beta s$, $\gamma>0$ and $\beta\in\R$,
\[
 p(s)s\geq k s^2-\frac{(k+\beta)_+^2}{4\gamma}
 \qquad\text{for every }k\geq0.
\]
Consequently the delayed equation
\[
 \partial_tu-\Delta u+\gamma u^3-\beta u
 =\int_I B(u(t+\theta))\,d\mu(\theta)+h
\]
has all the well-posedness, kernel-stability, and attractor properties above in dimensions one, two, and three. The feedback $B$ can be a globally Lipschitz pointwise map, the identity, or an admissible nonlinear nonlocal operator. The restriction $a_3>0$ expresses dissipative cubic growth; reactions with negative leading coefficient are not included.
\end{example}

\begin{remark}[Role of the dimension restriction]\label{rem:cubicdimension}
The cubic extension uses $d\leq3$ both to make the heat estimate
\eqref{eq:ultracontractive} integrable in time and to obtain the eventual
$H$-bound \eqref{eq:cubicHforcing} from a $V$-bound.
It does not identify $p$ with a locally Lipschitz map on $H$.
In particular, the derivative regularity at the initial time is
$\mathcal Z_T$, rather than the $L^2(0,T;V')$ regularity asserted for the abstract reaction class. Higher-dimensional polynomial problems require additional arguments and are not covered by the cubic theorems stated here.
\end{remark}

\section{Conclusion}

The delay law can be perturbed quantitatively in a topology that allows concentration and movement of atoms. For abstract $H$-valued reactions, the logarithmic stability estimate is uniform over bounded sets of continuous $L^2$-histories and yields explicit errors for distributed-to-discrete approximation and signed atomic quadrature. Total-variation stability and qualitative weak-star convergence also hold in the corresponding weak-solution energy norm.

For the abstract class, a gap between effective damping and the feedback growth slope supplies common compact absorption. The argument allows time-independent forcing in $H^{-1}(\Omega)$, and fractional smoothing strengthens attractor upper semicontinuity to continuous $H_0^1$-valued histories. The damping threshold is sharp as a guarantee based on those structural constants, while structured instantaneous negative feedback allows every fixed gain.

Pointwise cubic reactions in dimensions at most three admit a separate variational treatment. Combining cubic monotonicity with a heat-propagated kernel corrector gives the modulus $\sqrt{d\log(e+d^{-1})}$ for rough histories, together with convergence in the natural cubic energy spaces. Quartic coercivity removes the feedback smallness condition on every fixed total-variation-bounded class. A shifted convex energy and subsequent parabolic smoothing retain both attractor robustness results with forcing in $H^{-1}(\Omega)$.

Further questions include optimality of the two kernel moduli, higher-dimensional polynomial reactions, and construction of exponential attractors that vary quantitatively with the kernel. Infinite fading memory requires a different history space and is not covered by the present compact-interval argument.

\section*{Funding}
This research received no external funding.

\bibliographystyle{plainnat}
\bibliography{v3_refs}
\end{document}